%% LyX 2.4.0~RC3 created this file.  For more info, see https://www.lyx.org/.
%% Do not edit unless you really know what you are doing.
\documentclass[oneside,english]{amsart}
\usepackage[T1]{fontenc}
\usepackage[latin9]{inputenc}
\synctex=-1
\usepackage{babel}
\usepackage{mathtools}
\usepackage{enumitem}
\usepackage{amstext}
\usepackage{amsthm}
\usepackage{amssymb}
\PassOptionsToPackage{normalem}{ulem}
\usepackage{ulem}
\usepackage[]
 {hyperref}

\makeatletter
%%%%%%%%%%%%%%%%%%%%%%%%%%%%%% Textclass specific LaTeX commands.
\numberwithin{equation}{section}
\numberwithin{figure}{section}
\theoremstyle{plain}
\newtheorem{thm}{\protect\theoremname}[section]
\theoremstyle{remark}
\newtheorem{rem}[thm]{\protect\remarkname}
\theoremstyle{definition}
\newtheorem{example}[thm]{\protect\examplename}
\theoremstyle{definition}
\newtheorem{defn}[thm]{\protect\definitionname}
\theoremstyle{plain}
\newtheorem{lem}[thm]{\protect\lemmaname}
\theoremstyle{plain}
\newtheorem{prop}[thm]{\protect\propositionname}
\newlist{casenv}{enumerate}{4}
\setlist[casenv]{leftmargin=*,align=left,widest={iiii}}
\setlist[casenv,1]{label={{\itshape\ \casename} \arabic*.},ref=\arabic*}
\setlist[casenv,2]{label={{\itshape\ \casename} \roman*.},ref=\roman*}
\setlist[casenv,3]{label={{\itshape\ \casename\ \alph*.}},ref=\alph*}
\setlist[casenv,4]{label={{\itshape\ \casename} \arabic*.},ref=\arabic*}
\theoremstyle{plain}
\newtheorem{cor}[thm]{\protect\corollaryname}

%%%%%%%%%%%%%%%%%%%%%%%%%%%%%% User specified LaTeX commands.

\usepackage{amsthm}
\usepackage{etoolbox}
\usepackage{ifthen}
\usepackage{tikz}
\usetikzlibrary{calc}
\usepackage{tikz-cd}

\tikzcdset{%
    triple line/.code={\tikzset{%
        double distance = 3pt, 
        double=\pgfkeysvalueof{/tikz/commutative diagrams/background color}}},
    quadruple line/.code={\tikzset{%
        double distance = 5.3pt, 
        double=\pgfkeysvalueof{/tikz/commutative diagrams/background color}}},
    Rrightarrow/.code={\tikzcdset{triple line}\pgfsetarrows{tikzcd implies cap-tikzcd implies}},
    RRightarrow/.code={\tikzcdset{quadruple line}\pgfsetarrows{tikzcd implies cap-tikzcd implies}}
}

\makeatletter
\tikzset{
  column sep/.code=\def\pgfmatrixcolumnsep{\pgf@matrix@xscale*(#1)},
  row sep/.code   =\def\pgfmatrixrowsep{\pgf@matrix@yscale*(#1)},
  matrix xscale/.code=%
    \pgfmathsetmacro\pgf@matrix@xscale{\pgf@matrix@xscale*(#1)},
  matrix yscale/.code=%
    \pgfmathsetmacro\pgf@matrix@yscale{\pgf@matrix@yscale*(#1)},
  matrix scale/.style={/tikz/matrix xscale={#1},/tikz/matrix yscale={#1}}}
\def\pgf@matrix@xscale{1}
\def\pgf@matrix@yscale{1}
\makeatother

\theoremstyle{definition}
\newtheorem{convention}[thm]{Convention}
\newtheorem*{convention*}{Convention}
\newtheorem{notation}[thm]{Notation}
\newtheorem*{notation*}{Notation}

\makeatletter
\@namedef{subjclassname@2020}{%
  \textup{2020} Mathematics Subject Classification}
\makeatother

\numberwithin{thm}{subsection}

\usepackage{hyperref}

\makeatother

\providecommand{\casename}{Case}
\providecommand{\corollaryname}{Corollary}
\providecommand{\definitionname}{Definition}
\providecommand{\examplename}{Example}
\providecommand{\lemmaname}{Lemma}
\providecommand{\propositionname}{Proposition}
\providecommand{\remarkname}{Remark}
\providecommand{\theoremname}{Theorem}

\begin{document}
\global\long\def\Div{\operatorname{Div}}%

\global\long\def\ev{\operatorname{\textsc{ev}}}%

\global\long\def\EV{\operatorname{EV}}%

\global\long\def\NF{\operatorname{NF}}%

\global\long\def\nf{\operatorname{\textsc{nf}}}%

\global\long\def\Inv{\operatorname{Inv}}%

\global\long\def\Gar{\operatorname{Gar}}%

\global\long\def\im{\operatorname{im}}%

\global\long\def\End{\operatorname{End}}%

\global\long\def\sh{\operatorname{sh}}%

\global\long\def\h{\operatorname{h}}%

\global\long\def\t{\operatorname{t}}%

\global\long\def\op{\operatorname{op}}%

\global\long\def\sgn{\operatorname{sgn}}%

\global\long\def\mod{\operatorname{mod}}%

\title{Correspondence between factorability and normalisation in monoids}
\author{Alen \DJ uri\'{c}}
\address{Université Paris Cité, CNRS, Inria, IRIF, F-75013, Paris, France}
\email{alen.djuric@protonmail.com}
\thanks{%
\noindent\begin{minipage}[b][1\totalheight][t]{0.1\columnwidth}%
\begin{tikzpicture}[scale=0.027, every node/.style={scale=0.027}] \fill[fill={rgb,255:red,0;green,51;blue,153}] (-27,-18) rectangle (27,18); \pgfmathsetmacro\inr{tan(36)/cos(18)} \foreach \i in {0,1,...,11} { \begin{scope}[shift={(30*\i:12)}] \fill[fill={rgb,255:red,255;green,204;blue,0}] (90:2) \foreach \x in {0,1,...,4} { -- (90+72*\x:2) -- (126+72*\x:\inr) }; \end{scope} } \end{tikzpicture}%
\end{minipage}\hfill{}oo%
\begin{minipage}[b][1\totalheight][t]{0.8\columnwidth}%
This program has received funding from the European Union's Horizon
2020 research and innovation programme under the Marie Sk\l odowska-Curie
grant agreement No. 754362.%
\end{minipage}}
\date{30 December 2024}
\begin{abstract}
This article determines relations between two notions concerning monoids:
factorability structure, introduced to simplify the bar complex; and
quadratic normalisation, introduced to generalise quadratic rewriting
systems and normalisations arising from Garside families. Factorable
monoids are characterised in the axiomatic setting of quadratic normalisations.
Additionally, quadratic normalisations of class $\left(4,3\right)$
are characterised in terms of factorability structures and a condition
ensuring the termination of the associated rewriting system.  
\end{abstract}

\keywords{monoid, normal form, factorability, normalisation, rewriting system,
termination, convergence}
\subjclass[2020]{20M05, 68Q42}

\maketitle
\tableofcontents{}

\section*{Introduction}

This paper investigates combinatorial properties of a certain class
of monoids, seen from two different viewpoints, with a goal of unifying
the two. The main result answers the question, explicitly mentioned
in \cite{DDGKM} and \cite{DG}, of determining the relation between
these two approaches.

\subsection*{Factorability structures}

The notion of factorability structure on monoids and categories is
an extension by Wang \cite{Wan} and Hess \cite{Hes} of the definition
of factorability structure on groups introduced by Bödigheimer and
Visy \cite{BV} and Visy \cite{Vis}. Their motivation was to abstract
the structure, discovered in symmetric groups, that ensures the existence
of a normal form admitting some remarkable properties. In particular,
this normal form allows a reduction of the bar complex to a complex
having considerably fewer cells, well adapted for computing homology
of the algebraic structure in question. 

This reduction is achieved using the theory of collapsing schemes,
introduced in a topological flavour by Brown \cite{Bro2}, elaborated
for the algebraic setting by Cohen \cite{Coh1}, and rediscovered
(under the name of discrete Morse theory) by Forman \cite{For}. The
idea is to establish, for every nonnegative integer $n$, a bijection
(called collapsing scheme in \cite{Bro2}, and Morse matching in discrete
Morse theory) between a class of $n$-cells called redundant and a
class of $\left(n+1\right)$-cells called collapsible in such a way
that collapsing matched pairs preserves the homotopy type.

The idea of factorability for a given monoid $M$ and its generating
set $S$ is to determine a convenient way to split off a generator
from an element of the monoid. This is achieved by the notion of
factorability structure consisting of a factorisation map $\eta=\left(\eta',\overline{\eta}\right):M\to M^{2}$
subject to several axioms ensuring, in particular, a compatibility
with the multiplication in the monoid. In a manner of speaking, $\eta$
acts on an element $f$ of $M$ by splitting off a generator $\eta'\left(f\right)$.

For every factorability structure, there is an associated rewriting
system that is confluent but not necessarily terminating. However,
termination is obtained under the additional assumption that, for
all $s$ in $S$ and $f$ in $M$, the following equalities hold:
\begin{equation}
\eta'\left(sf\right)=\eta'\left(s\cdot\eta'\left(f\right)\right),\quad\overline{\eta}\left(sf\right)=\overline{\eta}\left(s\cdot\eta'\left(f\right)\right)\cdot\overline{\eta}\left(f\right).\label{eq:additional_assumption_intro}
\end{equation}

Section~\ref{sec:preliminaries} fixes basic terminology to be used
throughout the article. Basic notions and some results concerning
factorability are recollected in Section~\ref{sec:factorability}.

\subsection*{Quadratic normalisations}

Assume that a monoid is generated by a set $S$. By a normalisation,
we mean a syntactic transformation of an arbitrary word over $S$
into a \textquoteleft canonical\textquoteright{} one, called normal.
Quadratic normalisations in monoids, introduced by Dehornoy and Guiraud
\cite{DG} (influenced by Krammer \cite{Kra}), generalise, under
the same axiomatic setting, two well-known classes of normalisations:
those arising from quadratic rewriting systems, as studied in \cite{GGM}
for Artin-Tits monoids and in \cite{BCCL} and \cite{CGM} for plactic
monoids; and those arising from Garside families, as investigated
in \cite{DDGKM}, resulting from successive generalisations of the
greedy normal form in braid monoids, originating in the work of Garside
\cite{Gar}. Quadratic normalisations admit the following locality
properties: a word is normal if, and only if, its length-two factors
are normal; and the procedure of transforming a word into a normal
one consists of a finite sequence of rewriting steps, each of which
transforms a length-two factor.

The notion of class of a quadratic normalisation is defined in order
to measure the complexity of normalising length-three words. The class
$\left(m,n\right)$ means that every length-three word admits at most
$m$ rewriting steps starting from the left, and at most $n$ rewriting
steps starting from the right.

The class $\left(4,3\right)$ is explored in great detail in \cite{DG}
as it exhibits exceptionally favourable computational properties.
In particular, the rewriting system associated to a quadratic normalisation
of class $\left(4,3\right)$ is always convergent.

Quadratic normalisations are recalled in Section~\ref{sec:quadratic}.

\subsection*{Contributions}

We establish a correspondence between factorability structures and
quadratic normalisations for monoids, despite the different origins
and motivations for these two notions. By a correspondence, we mean
maps (in both directions) that are inverse to each other, up to technicalities,
between appropriate subclasses. Moreover, this bijection is compatible
with the associated rewriting systems.

Since the rewriting system associated with a factorability structure
is not necessarily terminating, whereas the rewriting system associated
to a quadratic normalisation of class $\left(4,3\right)$ is always
terminating, it has already been known that a quadratic normalisation
corresponding with a factorability structure is not necessarily of
class $\left(4,3\right)$.

It is shown here that a quadratic normalisation corresponding to a
factorability structure is always of class $\left(5,4\right)$ and
not smaller in general. A necessary and sufficient condition is given
for a quadratic normalisation to correspond to a factorability structure.
Thereby, we characterise factorable monoids in terms of quadratic
normalisations, thus adding another important family of monoids to
those unified under the axiomatic framework of quadratic normalisations.

In particular, a quadratic normalisation of class $\left(4,3\right)$
always yields a factorability structure, but not \emph{vice versa}.
However, the converse does hold under the stronger condition described
as follows. For a monoid $M$, consider the map $F\coloneqq\eta\circ\mu$
from the set $M^{2}$ to itself, with $\mu:M^{2}\to M$ denoting the
multiplication in $M$. Denote by $F_{1}$ (resp. $F_{2}$) the application
of $F$ to the first two elements (resp. the second and the third
element) of a triple in $M^{3}$. In general, a factorability structure
for $M$ and its generating set $S$ ensures the equality
\[
F_{1}F_{2}F_{1}F_{2}\left(r,s,t\right)=F_{2}F_{1}F_{2}\left(r,s,t\right)
\]
for each $S$-word $\left(r,s,t\right)$ such that $F_{2}F_{1}F_{2}\left(r,s,t\right)$
contains no $1$. The stronger condition states that this equality
holds for every $\left(r,s,t\right)$ in $S^{3}$. Since this condition
is implied by the class $\left(4,3\right)$, quadratic normalisations
of class $\left(4,3\right)$ are characterised in terms of factorability
structures.

Furthermore, we show that this stronger condition is equivalent to
the aforementioned additional assumption~(\ref{eq:additional_assumption_intro})
which is known to grant termination of the rewriting system associated
with a factorability structure. Simply put, the class $\left(4,3\right)$
equals factorability plus termination.

One of the benefits of the established correspondence between factorability
structures and quadratic normalisations is that it provides a way
of importing the results concerning homology, derived from the former
(see e.g. \cite{Vis}, \cite{Wan}, \cite{HO}) to the framework of
the latter, with the hope of generalising them to higher classes.
This would be our next step in the current direction of research.

To simplify the presentation, we are considering exclusively monoids,
but results stated here (recalled ones as well as new ones) mostly
extend to categories, seen as monoids with partial multiplication.
As a reminder that a monoid is thought of as a monoid of endomorphisms
(of an object), we tend to use letters $f$, $g$ and $h$ for elements
of a monoid.

\subsection*{Acknowledgments}

I warmly thank Viktoriya Ozornova for supervising this work and for
numerous remarks that highly improved the quality of the present text.
I am also grateful to the anonymous referee for meaningful suggestions.

\section{\protect\label{sec:preliminaries}Preliminaries}

\subsection{\protect\label{subsec:rewriting}Normal forms and rewriting systems}

If $S$ is a set, $S^{\ast}$ denotes the free monoid over $S$.
Elements of $S$ and $S^{\ast}$ are called $S$-letters and $S$-words,
respectively. So, an $S$-word is a finite sequence\footnote{In the broader context of categories, words are generalised to paths
of composable morphisms.} of $S$-letters, e.g.\ $\left(s_{1},\ldots,s_{n}\right)$. The prefix
$S$- is sometimes left out when the considered generating set is
evident from the context. The product in $S^{\ast}$ of two words
$u$ and $v$ is denoted by $u|v$. A letter $s$ is customarily
identified with the single-letter word $\left(s\right)$. Accordingly,
a word $\left(s_{1},\ldots,s_{n}\right)$ can be written as the product
(in $S^{\ast}$) of its letters: $s_{1}|\cdots|s_{n}$.

A monoid $M$ is said to be generated by a set $S$, often written
as $\left(M,S\right)$, if $M$ is a homomorphic image of the free
monoid $S^{\ast}$. Such a homomorphism is called an evaluation map
and denoted $\ev:S^{\ast}\to M$. A \textbf{normal form} for $M$
with respect to $S$ is a set-theoretic section, denoted by $\nf$,
of the evaluation map. To rephrase it, a normal form maps elements
of $M$ to distinguished representative words in $S^{\ast}$.

The length of $w$ in $S^{\ast}$ is denoted by $\left|w\right|$.
For an element $f$ of $M$, the minimal $S$-length of an $S$-word
representing $f$ is denoted by $\left|f\right|$. A normal form $\nf$
for a monoid $M$ with respect to a generating set $S$ is called
\textbf{geodesic} if, for every $f$ in $M$, the inequality $\left|\nf\left(f\right)\right|\leq\left|w\right|$
 holds for every $S$-word $w$ representing $f$, i.e. such that
$\ev\left(w\right)=f$. 

A (word) \textbf{rewriting system} is a pair $\left(S,R\right)$ consisting
of a set $S$ and a binary relation $R$ on $S^{\ast}$, whose elements
are called \textbf{rewriting rules}. An element $\left(u,v\right)$
of $R$ is also written as $u\rightarrow v$ to stress the fact that
it is directed. Seeing relations between words not as equalities
but as rewriting rules is a key concept of rewriting theory.

For a rewriting rule $\left(u,v\right)$, and for $w$ and $w'$ in
$S^{\ast}$, a pair $\left(w|u|w',w|v|w'\right)$ is called a \textbf{rewriting
step}. For $u$ and $v$ in $S^{\ast}$, we say that $u$ \textbf{rewrites}
to $v$ if there is a finite composable sequence of rewriting steps,
such that the source of the first step of the sequence is $u$ and
the target of the last step is $v$. A word $u$ is called \textbf{irreducible}
with respect to $R$ if there is no rewriting step whose source is
$u$. 

A rewriting system $\left(S,R\right)$ is called:
\begin{itemize}
\item \textbf{confluent} if any two rewriting sequences starting with the
same word can be completed in such a way that they eventually reach
a common result;
\item \textbf{normalising} if every $u$ in $S^{\ast}$ rewrites to at least
one irreducible word;
\item \textbf{terminating} if it admits no infinite rewriting sequence;
\item \textbf{convergent} if it is both confluent and terminating;
\item \textbf{reduced} (or minimal) if for every rewriting rule $u\rightarrow v$,
the word $v$ is irreducible with respect to $R$, and the word $u$
is irreducible with respect to $R\setminus\left\{ \left(u,v\right)\right\} $;
\item \textbf{strongly reduced} if it is reduced and, in addition, every
element of $S$ is irreducible;
\item \textbf{quadratic} if the source and target of every element of $R$
are of length $2$.
\end{itemize}
The monoid \textbf{presented by} a rewriting system $\left(S,R\right)$
is the quotient $M$ of the free monoid $S^{\ast}$ by the congruence
relation generated by $R$. If $\left(S,R\right)$ is confluent, the
irreducible word to which a word $w$ rewrites, if it exists, is denoted
by $\widehat{w}$. If $\left(S,R\right)$ is convergent, the map $M\to S^{\ast}$
defined by $f\mapsto\widehat{w}$, with $\ev\left(w\right)=f$, is
the normal form associated with the rewriting system $\left(S,R\right)$.
\begin{rem}
\label{rem:generating-(sub)set}If a generating set $S$ of a monoid
$M$ is a subset of $M$, then elements of $S$ can be regarded in
two ways: as length-one words in $S^{\ast}$, and as elements of $M$.
When a rewriting system presenting $M$ with respect to $S$ is strongly
reduced, this makes no essential difference, so elements of $S$ are
denoted in the same way, regardless of the viewpoint, relying on the
context to provide the proper interpretation. In particular, one can
say that a generating set contains (or that it does not contain) $1$,
the identity element of a monoid. This phrasing is the custom in the
context of factorability (see \cite{HO}, \cite{Hes}, \cite{Ozo}),
but not in the context of normalisation where a generating set is
commonly distinguished from its image under the evaluation map (see
\cite{DG}). So, we will emphasise such situation by calling $S$
a generating subset, not just a generating set, of $M$. When we characterise
factorable monoids in terms of quadratic normalisations (Subsection~\ref{subsec:characterisation}),
the corresponding normalisations will be eligible to share this custom
so there will be no need to emphasise it.
\end{rem}

For technical reasons, in the rest of this article, the letter $S$
will be reserved for the following purpose.

\begin{convention}\label{con:single-letter-for-1}When the letter
$S$ is used to denote a generating set of a monoid, it is understood
that $S$ is a pointed set\footnote{That is a set equipped with a distinguished element, called basepoint,
enjoying a special treatment in the given context.} whose basepoint is a letter, customarily denoted by $e$, representing
the identity of the considered monoid. In accordance with Remark~\ref{rem:generating-(sub)set},
the basepoint of $S$ is denoted by $1$ if $S$ is a generating subset
of the considered monoid.

On the other hand, if we exclude this letter from $S$, we write $S_{+}$
for the resulting generating set.\end{convention}

\subsection{Divisibility in monoids}

A monoid $M$ is said to be \textbf{left-cancellative} (resp. \textbf{right-cancellative})
if, for every $f$, $g$ and $g'$ in $M$, the equality $fg=fg'$
(resp. $gf=g'f$) implies the equality $g=g'$.

An element $f$ of a monoid $M$ is called a \textbf{left divisor}
of $g$ in $M$, and $g$ is called a \textbf{right multiple} of $f$,
denoted by $f\preceq g$, if there is an element $f'$ in $M$ such
that $ff'=g$. If $M$ is left-cancellative, then the element $f'$
is uniquely determined and called \textbf{the right complement of
$f$ in $g$}.

\section{\protect\label{sec:factorability}Factorability structures}

This section recalls the notion of factorability structure. Subsection~\ref{subsec:notion-of-factorability}
recollects the basic terminology. In Subsection~\ref{subsec:local-factorability},
we recall an alternative approach to factorability through the notion
of local factorability. Certain notions are redefined here in order
to overcome the issues arising from the original definition, which
are pointed out in Subsection~\ref{subsec:deviation}. Finally, Subsection~\ref{subsec:factorability-rewriting}
recalls the rewriting system associated to a factorability structure.
For elaboration, the reader is referred to \cite{HO} and \cite{Ozo}.

\subsection{\protect\label{subsec:notion-of-factorability}Factorability structures}

\begin{convention}\label{con:flip}Let us adopt the convention that
elements of a finite sequence are indexed starting from the leftmost
one, as in $\left(s_{1},s_{2},\ldots,s_{n}\right)$, thereby not following
the convention used in \cite{HO} where elements are indexed starting
from the rightmost one. The purpose is to make the notions that concern
factorability more easily comparable (in Section~\ref{sec:contribution})
with those concerning normalisation. \end{convention}

A pair $\left(f,g\right)$ in $M^{2}$ is called \textbf{geodesic}
if $\left|fg\right|=\left|f\right|+\left|g\right|$.

Let $M$ be a monoid, and let $S$ be a generating subset of $M$.
A \textbf{factorisation map} for $\left(M,S\right)$ is a map $\eta=\left(\eta',\overline{\eta}\right):M\to M^{2}$
satisfying the following conditions:
\begin{itemize}
\item for $f$ in $M\setminus\left\{ 1\right\} $, the element $\eta'\left(f\right)$
in $S_{+}$ is a left divisor of $f$, and the element $\overline{\eta}\left(f\right)$
is a right complement of $\eta'\left(f\right)$ in $f$;
\item the pair $\left(\eta'\left(f\right),\overline{\eta}\left(f\right)\right)$
is geodesic;
\item $\eta$ maps $1$, the identity element of $M$, to $\eta\left(1\right)=\left(1,1\right)$.
\end{itemize}
Whenever confusion is unlikely, $\eta'\left(f\right)$ and $\overline{\eta}\left(f\right)$
are abbreviated to $f'$ and $\overline{f}$, respectively.
\begin{example}
Assume that $M$ is a free commutative monoid generated by a nonempty
finite totally ordered set. Define $\eta=\left(\eta',\overline{\eta}\right):M\to M^{2}$
by setting $\eta'\left(f\right)$ to be the least left divisor of
$f$, lying in the generating set. Note that this is well-defined
since the left cancellation property of $M$ implies uniqueness of
right complements, so knowing $\eta'\left(f\right)$ determines $\overline{\eta}\left(f\right)$.
\end{example}

\begin{notation}\label{not:at-positions}Let $A$ be a set, and let
$F$ be a map from $A^{k}$ to $A^{l}$. Then the (partial) map $F_{i}:A^{\ast}\to A^{\ast}$
consists of applying $F$ to $k$ consecutive elements starting from
position $i$, i.e.\ to the elements at positions $i,i+1,\ldots,i+k-1$.\end{notation}
\begin{example}
\label{exa:illustration-partial-application}For the sake of illustration,
take the set $A=\left\{ a,b,c\right\} $ totally ordered by $a<b<c$.
 We write $<^{\ast}$ for the lexicographic extension of $<$ to
$A^{\ast}.$ Let $F:A^{2}\to A^{2}$ map each length-two word $w$
to the $<^{\ast}$-minimal word obtained by simply permuting letters
of $w$ if needed. Then, we have:
\[
c|b|a\stackrel{F_{2}}{\mapsto}c|a|b\stackrel{F_{1}}{\mapsto}a|c|b\stackrel{F_{2}}{\mapsto}a|b|c.
\]
\end{example}

The multiplication in $M$ is denoted by $\mu:M^{2}\to M$, and $\mu\left(f,g\right)$
is often abbreviated to $f\cdot g$ or $fg$.
\begin{defn}[{\cite[Definition 2.1]{HO}}]
Let $M$ be a monoid, and let $S$ be a generating subset of $M$.
A \textbf{factorability structure} on $\left(M,S\right)$ is a factorisation
map $\eta:M\to M^{2}$ such that, denoting the map $\eta\circ\mu:M^{2}\to M^{2}$
by $F$, for every triple in $M^{3}$, the three maps
\[
F_{1}F_{2}F_{1}F_{2},\quad F_{2}F_{1}F_{2},\quad F_{2}F_{1}F_{2}F_{1}
\]
coincide or each map reduces the sum of the lengths of the elements
of the triple. If $\eta:M\to M^{2}$ is a factorability structure
on $\left(M,S\right)$, we call the triple $\left(M,S,\eta\right)$
a \textbf{factorable monoid}.
\end{defn}

Assume that $\left(M,S,\eta\right)$ is a factorable monoid. The
\textbf{normal form associated with the factorability structure $\eta$},
or the $\eta$-normal form, for short, is the map $\nf_{\eta}:M\to S^{\ast}$
defined as
\[
f\mapsto\eta_{\left|f\right|-1}\cdots\eta_{2}\eta_{1}\left(f\right).
\]

\begin{example}
The map $F:A^{2}\to A^{2}$ in Example~\ref{exa:illustration-partial-application}
can be regarded as a composition $\eta\circ\mu$ of the multiplication
in $A^{\ast}$ and a factorability structure splitting off the least
letter. For $f=bacabc$, we get
\[
\nf_{\eta}\left(f\right)=\eta_{5}\cdots\eta_{2}\eta_{1}\left(f\right)=\left(a,a,b,b,c,c\right).
\]
\end{example}

For a factorable monoid $\left(M,S,\eta\right)$, an $M$-word $x$
is said to be \textbf{stable at the $i$th position} if $F_{i}\left(x\right)=x$;
it is \textbf{everywhere stable} if it is stable at the $i$th position
for every $i$ in $\left\{ 1,\ldots,\left|x\right|-1\right\} $. The
normal form $\nf_{\eta}$ admits the following locality property. 
\begin{lem}[{\cite[Remark 2.1.27]{Hes}}]
\label{lem:normal-stable}Let $\left(M,S,\eta\right)$ be a factorable
monoid. Then, for every \textbf{$f$} in $M$, the $\eta$-normal
form is everywhere stable.
\end{lem}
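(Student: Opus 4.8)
The plan is to prove that the $\eta$-normal form $\nf_\eta(f)$ is everywhere stable, meaning that applying $F_i$ to it leaves it unchanged at every position $i$. Recall that $\nf_\eta(f) = \eta_{|f|-1}\cdots\eta_2\eta_1(f)$, where each $\eta_i$ applies the factorisation map $\eta$ to the single entry at position $i$, splitting off a generator and thereby increasing the length of the tuple by one. The result of applying all these maps in succession is a word in $S_+^\ast$ whose length equals $|f|$, since at each stage we split off exactly one generator from a geodesic factorisation.

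First I would unwind the definition of $F_i$ versus $\eta_i$. The map $\eta_i$ takes a single entry $x_i$ and replaces it by the pair $(\eta'(x_i), \overline\eta(x_i))$, whereas $F_i = (\eta\circ\mu)_i$ first multiplies two consecutive entries $x_i, x_{i+1}$ and then refactors the product via $\eta$. To say $\nf_\eta(f)$ is stable at position $i$ is to say $F_i$ fixes it, i.e. $\eta(x_i x_{i+1}) = (x_i, x_{i+1})$ for the relevant consecutive pair in the normal form. So the key is to understand the relationship between the iterated splitting-off procedure and the refactorisation map $F$.

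The core step is an induction. I would argue that at each stage of the construction of $\nf_\eta$, the prefix already produced is "locally normal" in the sense that consecutive pairs are $F$-stable, and that applying the next $\eta_i$ preserves this property on the earlier positions while establishing it at the newly relevant position. The crucial algebraic input should be the compatibility encoded in the factorisation axioms: because $\eta'(f)\in S_+$ is a \emph{single} left divisor and $(\eta'(f),\overline\eta(f))$ is geodesic, a pair $(s,g)$ with $s$ a generator satisfies $F(s,g)=(s,g)$ precisely when $s = \eta'(sg)$, i.e. when $s$ is the generator that $\eta$ would itself split off from the product. I would show that the recursive definition of $\nf_\eta$ guarantees exactly this: the generator sitting at position $i$ in the final word is the one $\eta$ splits off from the product of the remaining suffix, so refactoring via $F_i$ returns the same pair.

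The main obstacle I anticipate is bookkeeping the interaction between positions: applying $\eta_i$ not only acts at position $i$ but shifts all later entries, so I must carefully track how stability at position $j<i$ established earlier is not disturbed, and how stability at position $i$ itself follows from the factorisation axioms combined with the definition of $F$ being the refactorisation $\eta\circ\mu$. I would likely phrase the induction on $|f|$ (or on the number of applied $\eta_i$'s), using the factorability axioms only through the consequence that geodesic pairs whose first component is a single split-off generator are $F$-fixed; the genuinely delicate part is verifying that the word produced has all its consecutive pairs of this form, which amounts to an associativity-type compatibility of splitting off the least generator iteratively versus refactoring adjacent products. I would expect the formal argument in \cite{HO} to rely on the defining equation $F_{1}F_{2}F_{1}F_{2}=F_{2}F_{1}F_{2}=F_{2}F_{1}F_{2}F_{1}$ (on everywhere-stable or length-preserving triples) to reconcile these two descriptions.
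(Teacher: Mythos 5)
The paper does not prove this lemma itself --- it imports it from \cite[Lemma 3.2]{HO} --- so your sketch has to be judged against the argument that citation stands for. Your setup is fine, and in fact the difficulty you anticipate is not where the difficulty lies: since each $\eta_i$ only splits the last entry of the tuple, the normal form is $(s_1,\ldots,s_n)$ with $s_i=\eta'\bigl(\overline{\eta}^{\,i-1}(f)\bigr)$, prefixes are never disturbed, and by induction on $\left|f\right|$ the whole lemma reduces to a single claim: for every $g$ with $\left|g\right|\geq 2$, the pair $\bigl(g',\overline{g}'\bigr)=\bigl(\eta'(g),\eta'(\overline{\eta}(g))\bigr)$ is stable. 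The genuine gap is that you never prove this claim; you assert that ``the recursive definition of $\nf_\eta$ guarantees exactly this.'' It does not. The construction guarantees only that $s_i$ is what $\eta$ splits off from the product of the \emph{whole suffix} $s_i s_{i+1}\cdots s_n$, whereas stability is the statement $\eta(s_i s_{i+1})=(s_i,s_{i+1})$ about the adjacent pair alone; passing from the suffix to the pair is precisely the content of the lemma. Moreover, your stated criterion --- $F(s,g)=(s,g)$ ``precisely when'' $s=\eta'(sg)$ --- is wrong in the ``if'' direction: $M$ is not assumed left-cancellative, so $\eta'(sg)=s$ does not force $\overline{\eta}(sg)=g$.

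What is actually needed, and what your sketch only gestures at in its last sentence, is a concrete application of the factorability axiom to a well-chosen triple. For instance, take $T=(1,g',\overline{g})$. Its length sum is $0+1+(\left|g\right|-1)=\left|g\right|$, which is the minimum possible for any triple with product $g$, and every $F_i$ preserves the product; hence the ``each map reduces the sum of the lengths'' branch of the axiom is impossible, and the three maps must coincide on $T$. Computing (rightmost map applied first, and using $\eta(s)=(s,1)$ for $s\in S_+$, which follows from geodesicity): $F_2(T)=T$, then $F_2F_1F_2(T)=(g',\overline{g}',\overline{\overline{g}})$, while $F_1F_2F_1F_2(T)=F_1\bigl(g',\overline{g}',\overline{\overline{g}}\bigr)=\bigl(\eta(g'\overline{g}'),\overline{\overline{g}}\bigr)$. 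Equating these two is exactly the stability of $(g',\overline{g}')$, which closes the induction. Note that the choice of triple matters: applying the axiom instead to $(g',\overline{g},1)$, or to an already everywhere-stable triple as your parenthetical suggests, yields either statements about $F_2F_1\bigl(g',\overline{g}',\overline{\overline{g}}\bigr)$ or nothing at all, and proves nothing about the pair $(g',\overline{g}')$. Without identifying such a triple and the pair of composite maps to compare, your plan does not go through.
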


Although it may appear that \cite[Remark 2.1.27]{Hes} uses an extra
condition, namely \textquoteleft the recognition principle\textquoteright ,
this is not the case since this condition is automatically satisfied
in a factorable monoid (see \cite[Lemma 2.2.2]{Hes}).

\subsection{\protect\label{subsec:local-factorability}Local factorability structure}

There is an alternative definition of factorability by use of the
notion of local factorability, due to Moritz Rodenhausen. In order
to resolve an issue detected in the original definition of local factorability
(to be addressed in the next subsection), we introduce the following
notation.

\begin{notation}\label{not:avoid-phi(s)}Let $A$ be a set, and let
$\varphi$ be a map from $A^{2}$ to itself. The composite map $\underrightarrow{\varphi}:A^{\ast}\to A^{\ast}$
is defined as
\[
w\mapsto\varphi_{\left|w\right|-1}\cdots\varphi_{1}\left(w\right).
\]
Note that, in the above composition, $\varphi_{\left|w\right|-1}$
is applied last, taking the rightmost length-two factor of $\varphi_{\left|w\right|-2}\cdots\varphi_{1}\left(w\right)$
as an argument. In particular, if $w$ has length $1$, then $\underrightarrow{\varphi}\left(w\right)=w.$
\end{notation}
\begin{defn}
\label{def:normalisation-map}Let $S$ be a pointed set with basepoint
$1$, and let $\varphi$ be a map from $S^{2}$ to itself. The \textbf{normalisation
map associated with the map $\varphi$} is a map \textbf{}$N_{\varphi}$
from $S^{\ast}$ to itself, defined as follows:
\begin{enumerate}
\item $N_{\varphi}$ of the empty word is the empty word;
\item \label{enu:N_phi-2}$N_{\varphi}$ of a word containing $1$ equals
$N_{\varphi}$ of the same word with $1$ removed;
\item \label{enu:N_phi-3}and
\[
N_{\varphi}\left(s_{1},\ldots,s_{n}\right)\coloneqq\begin{cases}
\underrightarrow{\varphi}\left(s_{1}|N_{\varphi}\left(s_{2},\ldots,s_{n}\right)\right) & \textrm{ if it contains no }1\\
N_{\varphi}\left(\underrightarrow{\varphi}\left(s_{1}|N_{\varphi}\left(s_{2},\ldots,s_{n}\right)\right)\right) & \textrm{ otherwise}.
\end{cases}
\]
\end{enumerate}
\end{defn}

\begin{rem}
Note that, by recursion, the computation of $N_{\varphi}\left(s_{1},\ldots,s_{n}\right)$
terminates and its length is bounded by $n$. Namely, all the recursive
calls for $N_{\varphi}$ are made on words of smaller length: directly
in (\ref{enu:N_phi-2}) and in the first case of (\ref{enu:N_phi-3});
and indirectly in the second case of (\ref{enu:N_phi-3}), which calls
for (\ref{enu:N_phi-2}). The length of $N_{\varphi}\left(s_{1},\ldots,s_{n}\right)$
is less than or equal to the length of $\underrightarrow{\varphi}\left(s_{1}|N_{\varphi}\left(s_{2},\ldots,s_{n}\right)\right)$,
which is less than or equal to the length of $N_{\varphi}\left(s_{2},\ldots,s_{n}\right)$
plus $1$. 
\end{rem}

We state anew the definition of local factorability structure on $\left(M,S\right)$,
using Definition~\ref{def:normalisation-map}.
\begin{defn}
\label{def:local-factorability-structure}Let $M$ be a monoid and
let $S$ be a generating subset of $M$. A \textbf{local factorability
structure} is a map $\varphi$ from $S^{2}$ to itself, having the
following properties:
\begin{enumerate}
\item \label{enu:local-factorability-structure-1}$M$ admits the presentation
\[
\left\langle S_{+}\vert\left\{ \left(s,t\right)=\varphi\left(s,t\right)\vert s,t\in S_{+}\right\} \right\rangle ;
\]
\item \label{enu:local-factorability-structure-2}$\varphi$ is idempotent;
\item \label{enu:local-factorability-structure-3}$\varphi\left(1,s\right)$
equals $\left(s,1\right)$ for every $s$ in $S_{+}$;
\item \label{enu:local-factorability-structure-4}for every $\left(r,s,t\right)$
in $S_{+}^{3}$, the equality
\[
\varphi_{1}\varphi_{2}\varphi_{1}\varphi_{2}\left(r,s,t\right)=\varphi_{2}\varphi_{1}\varphi_{2}\left(r,s,t\right)
\]
 holds or $\varphi_{2}\varphi_{1}\varphi_{2}\left(r,s,t\right)$ contains
$1$;
\item \textbf{\label{enu:local-factorability-structure-5}}the normalisation
map associated with $\varphi$ satisfies 
\[
N_{\varphi}\left(r,s,t\right)=N_{\varphi}\left(\varphi_{1}\left(r,s,t\right)\right)
\]
for every $\left(r,s,t\right)$ in $S^{3}$.
\end{enumerate}
\end{defn}

By Definition~\ref{def:normalisation-map}, for every $S$-word
$w$, the word $N_{\varphi}\left(w\right)$ contains no $1$. If we
add to $N_{\varphi}\left(w\right)$ a string of $1$'s on the right,
then the result is called an \textbf{extended form} of $N_{\varphi}\left(w\right)$.

\begin{lem}[{\cite[Lemma 2.3.5]{Ozo}}]
\label{lem:2121-stable}Let $S$ be a pointed set with basepoint
$1$, and let $\varphi$ be a map from $S^{2}$ to itself. If $\varphi$
formally satisfies the second, the third and the fourth condition
of Definition~\ref{def:local-factorability-structure}, then $\varphi_{1}\varphi_{2}\varphi_{1}\varphi_{2}\left(r,s,t\right)$
is an extended form of \textbf{$N_{\varphi}\left(r,s,t\right)$} for
every $S$-word $\left(r,s,t\right)$.
\end{lem}

A factorability structure is equivalent to a local factorability structure,
in the following sense.
\begin{thm}[{\cite[Theorem 3.4]{HO}}]
\label{thm:factorability-local-factorability}~
\begin{enumerate}
\item If $\left(M,S,\eta\right)$ is a factorable monoid, then the restriction
of the map $\eta\mu$ to $S^{2}$ defines a local factorability structure
on $M$.
\item Conversely, one can construct a factorability structure out of a local
factorability structure by setting $\eta\left(1\right)=\left(1,1\right)$
and 
\[
\eta\left(f\right)=\left(r_{1},\ev\left(r_{2},\ldots,r_{n}\right)\right)\textrm{ for }N_{\varphi}\left(w\right)=\left(r_{1},r_{2},\ldots,r_{n}\right),
\]
with $w$ being any $S$-word representing $f$.
\item These constructions are inverse to each other.
\item \label{enu:correspondence-eta-phi-4}By this correspondence, for $f$
in $M$, the $\eta$-normal form $\nf_{\eta}\left(f\right)$ equals
$N_{\varphi}\left(w\right)$ for any $S$-word $w$ representing $f$.
\end{enumerate}
\end{thm}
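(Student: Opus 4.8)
The plan is to regard the two constructions as two encodings of one and the same normal form, to prove the normal-form correspondence~\ref{enu:correspondence-eta-phi-4} first, and to deduce the verification of the axioms and the mutual invertibility from it.

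I would begin with the direction of part~(1), setting $\varphi\coloneqq\eta\mu|_{S^{2}}$. Since $\overline{\eta}(f)$ is a right complement of $\eta'(f)$ in $f$, we have $\mu\eta=\mathrm{id}_{M}$. For $s,t\in S$ this already shows $\varphi$ is well defined as a self-map of $S^{2}$: in $\varphi(s,t)=\eta(st)$ the first entry $\eta'(st)$ lies in $S_{+}$, and geodesy gives $|\overline{\eta}(st)|=|st|-1\leq1$, so $\overline{\eta}(st)\in S$. The same identity $\mu\eta=\mathrm{id}_{M}$ yields $F^{2}=\eta(\mu\eta)\mu=\eta\mu=F$, so $\varphi$ is idempotent (condition~\ref{enu:local-factorability-structure-2}); and for $s\in S_{+}$ geodesy forces $\eta(s)=(s,1)$, whence $\varphi(1,s)=\eta(s)=(s,1)$ (condition~\ref{enu:local-factorability-structure-3}). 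For condition~\ref{enu:local-factorability-structure-4} I would note that, because $F(S^{2})\subseteq S^{2}$, every intermediate word occurring in $\varphi_{2}\varphi_{1}\varphi_{2}(r,s,t)$ and $\varphi_{1}\varphi_{2}\varphi_{1}\varphi_{2}(r,s,t)$ for $(r,s,t)\in S_{+}^{3}$ stays in $S^{3}$, where $\varphi_{i}$ and $F_{i}$ agree; since the total $S$-length of a triple in $S^{3}$ is its number of non-identity entries, ``each map reduces the sum of lengths'' becomes ``$\varphi_{2}\varphi_{1}\varphi_{2}(r,s,t)$ contains a $1$'', and the factorability axiom specialised to $S_{+}^{3}$ yields precisely condition~\ref{enu:local-factorability-structure-4}.

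The linchpin is statement~\ref{enu:correspondence-eta-phi-4}, that $N_{\varphi}(w)=\nf_{\eta}(\ev(w))$ for every $S$-word $w$, which I would prove by induction on $|w|$. Writing $w=s_{1}|w'$ with $g=\ev(w')$, the inductive hypothesis gives $N_{\varphi}(w')=\nf_{\eta}(g)$, a word that is everywhere stable by Lemma~\ref{lem:normal-stable}. The case $s_{1}=1$ is immediate from clause~\ref{enu:N_phi-2} of Definition~\ref{def:normalisation-map}; for $s_{1}\in S_{+}$ the value $N_{\varphi}(w)$ is read off from $u=\underrightarrow{\varphi}(s_{1}|\nf_{\eta}(g))$, possibly after a recursive call. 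The heart of the matter is an insertion lemma: prepending a single generator to an everywhere-stable word, performing the left-to-right sweep $\underrightarrow{\varphi}$, and discarding any $1$ it creates, produces the everywhere-stable normal form of $s_{1}g$. On length-three factors this is precisely Lemma~\ref{lem:2121-stable}, which identifies $\varphi_{1}\varphi_{2}\varphi_{1}\varphi_{2}$ with $N_{\varphi}$ up to trailing $1$'s; the general case propagates this statement along the word using the stability of $\nf_{\eta}(g)$. Granting~\ref{enu:correspondence-eta-phi-4}, the remaining parts of~(1) are formal: condition~\ref{enu:local-factorability-structure-5} holds because $\ev\varphi_{1}(r,s,t)=rst=\ev(r,s,t)$, so both sides equal $\nf_{\eta}(rst)$; and the presentation condition~\ref{enu:local-factorability-structure-1} holds because each step $(s,t)\mapsto\varphi(s,t)$ and each deletion of a $1$ preserves $\ev$, while any two words with equal image under $\ev$ both reduce, through such steps, to the common value $N_{\varphi}=\nf_{\eta}$.

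For the converse, part~(2), the map $\eta$ built from $\varphi$ is well defined once $N_{\varphi}(w)$ is known to depend only on $\ev(w)$; this follows from the presentation condition~\ref{enu:local-factorability-structure-1} together with the invariance $N_{\varphi}(\cdots s,t\cdots)=N_{\varphi}(\cdots\varphi(s,t)\cdots)$, which extends condition~\ref{enu:local-factorability-structure-5} to an arbitrary position and context. That $\eta$ is then a factorisation map (divisor property and geodesy) and satisfies the factorability axiom on all of $M^{3}$ is read off from the length and stability properties of $N_{\varphi}$, lifting the $S_{+}^{3}$-statement of condition~\ref{enu:local-factorability-structure-4} to $M^{3}$. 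Part~(3) is bookkeeping: from $\eta$ one reconstructs $f\mapsto(s_{1},\ev(s_{2},\ldots,s_{n}))$ with $\nf_{\eta}(f)=(s_{1},\ldots,s_{n})$, and since $\nf_{\eta}(f)=(\eta'(f))|\nf_{\eta}(\overline{\eta}(f))$ this returns $(\eta'(f),\overline{\eta}(f))=\eta(f)$, while $\eta\mu|_{S^{2}}$ recovers $\varphi$ by idempotence. The main obstacle throughout is the insertion lemma behind~\ref{enu:correspondence-eta-phi-4}: showing that normalising length-three factors suffices to normalise globally --- equivalently, that the $\underrightarrow{\varphi}$-sweep interacts correctly with an already-stable word --- is exactly where the confluence encoded in the factorability axiom (through Lemma~\ref{lem:2121-stable}) must be combined with Lemma~\ref{lem:normal-stable}.
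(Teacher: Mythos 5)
Two preliminary points of reference. First, the paper offers no proof of Theorem \ref{thm:factorability-local-factorability} at all --- it is quoted from \cite{HO}, and the text explicitly defers the proof to \cite[Section 2.3]{Ozo} --- so there is no in-paper argument to compare yours with; your attempt can only be judged on its own completeness, against what that external proof actually has to do.

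Judged that way, your proposal is a correct plan whose hard steps are asserted rather than proved. The reductions you do carry out are fine: $\mu\eta=\mathrm{id}_{M}$ gives idempotency of $\varphi=\eta\mu|_{S^{2}}$ and that $F$ maps $S^{2}$ to itself; geodesy forces $\eta\left(s\right)=\left(s,1\right)$ for $s\in S_{+}$, hence property (\ref{enu:local-factorability-structure-3}); on $S^{3}$, ``reducing the total length'' is indeed equivalent to ``containing a $1$'', so the factorability axiom restricted to $S_{+}^{3}$ gives property (\ref{enu:local-factorability-structure-4}); and properties (\ref{enu:local-factorability-structure-1}) and (\ref{enu:local-factorability-structure-5}) do follow formally once part (\ref{enu:correspondence-eta-phi-4}) is available. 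But everything then hangs on your ``insertion lemma'' --- that $\underrightarrow{\varphi}\left(s_{1}|\nf_{\eta}\left(g\right)\right)$, after removal of $1$'s, equals $\nf_{\eta}\left(s_{1}g\right)$ --- and this you dismiss with ``the general case propagates this statement along the word''. It does not propagate formally. Lemma \ref{lem:2121-stable} is a purely syntactic statement about $N_{\varphi}$ on length-three words; to deduce the insertion lemma you additionally need (i) the converse of Lemma \ref{lem:normal-stable}, namely that an everywhere-stable word containing no $1$ is \emph{the} $\eta$-normal form of its evaluation --- a uniqueness statement your sketch never establishes --- and (ii) applications of the factorability axiom to triples in $M^{3}$ whose entries are not single letters, in order to see that a single left-to-right sweep leaves the earlier positions stable at the end. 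That induction is precisely the multi-page content of \cite[Section 2.3]{Ozo}. The same pattern recurs, more seriously, in your part (2): the well-definedness of $\eta$ (that $N_{\varphi}\left(w\right)$ depends only on $\ev\left(w\right)$) is attributed to an extension of property (\ref{enu:local-factorability-structure-5}) ``to an arbitrary position and context'', which is itself an unproven confluence-type claim; and the verification that the resulting $\eta$ is geodesic and satisfies the factorability axiom on all of $M^{3}$ --- the lifting of a hypothesis about $S_{+}^{3}$ to $M^{3}$, which is the hardest implication of the whole theorem --- is covered only by the phrase ``is read off from the length and stability properties of $N_{\varphi}$''. Until these three points are argued, what you have is an accurate outline of the known proof, not a proof.
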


The proof of Theorem~\ref{thm:factorability-local-factorability}
can be found in \cite[Section 2.3]{Ozo}.
\begin{rem}
\label{rem:first-component-phi}Here are some observations about local
factorability structures that will be used implicitly from now on.
\begin{itemize}
\item The property~(\ref{enu:N_phi-3}) of Definition~\ref{def:normalisation-map}
implies $N_{\varphi}\left(s\right)=s$ for every $s$ in $S_{+}$.
\item For every $S$-word $\left(s,t\right)$, the first element of $\varphi\left(s,t\right)$
cannot be equal to $1$ unless the second element is equal to $1$.
Namely, assume the opposite: $\varphi\left(s,t\right)=\left(1,t'\right)$
for $t'\neq1$. Then the idempotency of $\varphi$ gives $\varphi\left(1,t'\right)=\left(1,t'\right)$,
which contradicts the property~(\ref{enu:local-factorability-structure-3})
of Definition~\ref{def:local-factorability-structure}. 
\item Note that, by Theorem~\ref{thm:factorability-local-factorability},
the equality $\varphi\left(s,t\right)=\left(1,1\right)$ holds if,
and only if, $st=1$ in $M$.
\end{itemize}
\end{rem}

\subsection{\protect\label{subsec:deviation}Deviation from the original definition}

As Convention~\ref{con:flip} hints, the original definition of
factorisation map, which separates a right divisor, is reformulated
in Subsection~\ref{subsec:notion-of-factorability} to separate a
left divisor, instead. The definition of local factorability structure
is also modified.

Let us recall the original definition of local factorability structure,
in order to justify its present modification (Definition~\ref{def:local-factorability-structure}).
For simplicity, we still assume Convention~\ref{con:single-letter-for-1},
so we do not actually copy the original\emph{ verbatim}, but we do
preserve its essence (as well as the convention of indexing from the
right).

Here is a recollection of \cite[Definition 3.3]{HO}. Let $M$ be
a monoid, and let $S$ be a generating subset of $M$. A local factorability
structure on $\left(M,S\right)$ is a map $\varphi$ from $S^{2}$
to itself, having the following properties:
\begin{enumerate}
\item $M$ admits the presentation
\[
\left\langle S_{+}\vert\left\{ \left(t,s\right)=\varphi\left(t,s\right)\vert s,t\in S_{+}\right\} \right\rangle ;
\]
\item $\varphi$ is idempotent;
\item $\varphi\left(s,1\right)$ equals $\left(1,s\right)$ for every $s$
in $S_{+}$;
\item for every $\left(t,s,r\right)$ in $S_{+}^{3}$, applying any $\varphi_{i}$
to the triple $\varphi_{2}\varphi_{1}\varphi_{2}\left(t,s,r\right)$
leaves it unchanged, or $\varphi_{2}\varphi_{1}\varphi_{2}\left(t,s,r\right)$
contains $1$;
\item $\NF\left(t,s,r\right)=\NF\left(\varphi_{1}\left(t,s,r\right)\right)$
for all $\left(t,s,r\right)$ in $S_{+}^{3}$.
\end{enumerate}
Here, the map $\NF$ from $S^{\ast}$ to itself is defined inductively
on the length of the word, as follows:
\begin{itemize}
\item $\NF$ of the empty word is the empty word;
\item $\NF\left(s\right)$ equals $s$ for all $s$ in $S_{+}$;
\item $\NF$ of a word containing $1$ equals $\NF$ of the same word with
$1$ removed;
\item and
\[
\NF\left(s_{n},\ldots,s_{1}\right)\coloneqq\begin{cases}
\varphi_{n-1}\cdots\varphi_{1}\left(\NF\left(s_{n},\ldots,s_{2}\right),s_{1}\right), & \textrm{ if it contains no }1;\\
\NF\left(\varphi_{n-1}\cdots\varphi_{1}\left(\NF\left(s_{n},\ldots,s_{2}\right),s_{1}\right)\right), & \textrm{ otherwise}.
\end{cases}
\]
\end{itemize}
\begin{rem}
\label{rem:typo-phi(s)}Notice that there is a flaw in the above definition.
Since $\varphi$ is defined on the domain $S^{2}$, it is not clear
what should one do when one gets the expression of the form $\varphi\left(s\right)$,
where $s$ is a single element of $S_{+}$. However, such an expression
may indeed occur if $\varphi$ is applied just after $\NF$ has eliminated
$1$ and thus shortened the word in question. For instance, take $s_{2}=1$.
Then
\[
\NF\left(s_{2},s_{1}\right)=\varphi_{1}\left(\NF\left(s_{2}\right),s_{1}\right)=\varphi_{1}\left(s_{1}\right),
\]
which is not defined. We have resolved this issue by introducing Notation\uline{
}\ref{not:avoid-phi(s)}.
\end{rem}

\begin{rem}
\label{rem:terminology}Note that, in \cite{HO} and \cite{Ozo},
the map $\NF:S^{\ast}\to S^{\ast}$ is called the normal form. In
the present text (following \cite{DG}), however, a normal form is
a map from the presented monoid, not from the free monoid over a generating
set (recall Subsection~\ref{subsec:rewriting}). Accordingly, we
do not call the map $N_{\varphi},$ which is an analogue of $\NF$,
a normal form. Instead, we call such a syntactic transformation (of
arbitrary words into normal ones) a normalisation map (as in Definition~\ref{def:normalisation-map}).
\end{rem}

\subsection{\protect\label{subsec:factorability-rewriting}Rewriting system associated
with factorability}

A rewriting system is associated with a factorable monoid in a canonical
way.
\begin{lem}[{\cite[Lemma 5.1]{HO}}]
\label{lem:presentation-factorable-monoid}Let $\left(M,S,\eta\right)$
be a factorable monoid. If $R$ is the set of rewriting rules of the
form 
\[
\left(s,t\right)\rightarrow\eta\mu\left(s,t\right)
\]
for all $S_{+}$-words $s$ and $t$ such that $\left(s,t\right)$
is not stable, then $\left(S_{+},R\right)$ is a confluent, strongly
reduced rewriting system presenting $M$. Here, if $\overline{\eta}\left(st\right)=1$,
then the rewriting rule is interpreted as $\left(s,t\right)\rightarrow st$.
\end{lem}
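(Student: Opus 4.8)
The plan is to establish the three asserted properties of the rewriting system $\left(S_{+},R\right)$ separately, and in doing so I would lean heavily on the correspondence between the factorability structure $\eta$ and the associated local factorability structure $\varphi=\eta\mu|_{S^{2}}$ furnished by Theorem \ref{thm:factorability-local-factorability}. First I would argue that the system \emph{presents} $M$. The rules $\left(s,t\right)\rightarrow\varphi\left(s,t\right)$ are, by construction, relations that hold after evaluation in $M$ (since $\eta\mu$ does not change the product), so the monoid presented by $\left(S_{+},R\right)$ surjects onto $M$. Conversely, condition (\ref{enu:local-factorability-structure-1}) of Definition \ref{def:local-factorability-structure} says $M$ admits exactly the presentation $\langle S_{+}\mid (s,t)=\varphi(s,t)\rangle$; the only discrepancy is that $R$ omits the rules for stable pairs, but a stable pair satisfies $\varphi\left(s,t\right)=\left(s,t\right)$, so the omitted relations are trivial and adding or removing them does not change the presented monoid. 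The interpretation $\left(s,t\right)\rightarrow st$ when $\overline{\eta}\left(st\right)=1$ is just unwinding the identity-letter convention of Convention \ref{con:single-letter-for-1}, using Remark \ref{rem:first-component-phi} to see that $\varphi(s,t)=(st,1)$ in that case.

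Next I would treat \textbf{strong reducedness}. A pair $\left(s,t\right)$ is a source of a rule precisely when it is not stable, i.e. when $\varphi\left(s,t\right)\neq\left(s,t\right)$. For \emph{reducedness} I must check two things: that every target $\varphi\left(s,t\right)$ is irreducible, and that every source is irreducible after its own rule is deleted. Irreducibility of the target follows immediately from idempotency of $\varphi$ (condition (\ref{enu:local-factorability-structure-2})): applying $\varphi$ to $\varphi\left(s,t\right)$ returns it unchanged, so the target is stable, hence not a source of any rule. The second reducedness requirement — that a source $\left(s,t\right)$ cannot itself be rewritten by a \emph{different} rule — holds because the rewriting rule out of a length-two word is uniquely determined: the only length-two factor of $\left(s,t\right)$ is $\left(s,t\right)$ itself, so the rule $\left(s,t\right)\rightarrow\varphi\left(s,t\right)$ is the unique applicable rewriting step. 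Finally, \emph{strong} reducedness additionally requires each element of $S_{+}$ to be irreducible, which is automatic since the rules are quadratic: a single letter admits no length-two factor and hence no rewriting step.

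The substantive part is \textbf{confluence}. Since the system is quadratic, Newman-style local confluence reduces to analysing critical pairs, which arise exactly from overlapping length-two patterns inside a length-three word $\left(r,s,t\right)\in S_{+}^{3}$: one branch begins by rewriting the left factor $(r,s)$, the other the right factor $(s,t)$. I would reduce confluence of all words to confluence on these length-three overlaps. The key input is Lemma \ref{lem:2121-stable}, which identifies $\varphi_{1}\varphi_{2}\varphi_{1}\varphi_{2}\left(r,s,t\right)$ as an extended form of $N_{\varphi}\left(r,s,t\right)$, together with condition (\ref{enu:local-factorability-structure-4}) asserting that $\varphi_{1}\varphi_{2}\varphi_{1}\varphi_{2}$ and $\varphi_{2}\varphi_{1}\varphi_{2}$ agree (or the shorter one already contains $1$, in which case $N_{\varphi}$ collapses it further). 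In terms of the rewriting system, translating $F=\eta\mu$ back into $\varphi$, the two branches of a critical pair can each be completed to this common normal form $N_{\varphi}\left(r,s,t\right)$ by further rewriting steps; the words containing $1$ are handled by the identity-letter interpretation of the rules, which corresponds exactly to clause (\ref{enu:N_phi-2}) of Definition \ref{def:normalisation-map} that deletes $1$'s. The main obstacle I anticipate is precisely this bookkeeping: matching the abstract word-combinatorial statement of Lemma \ref{lem:2121-stable} (phrased in terms of $\varphi_{i}$ and extended forms padded with $1$'s) to the concrete rewriting steps of $\left(S_{+},R\right)$, where stable pairs are not rewritten and the occurrences of $1$ are absorbed by the special interpretation of the rules. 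Once that dictionary is set up carefully, local confluence on length-three words follows from the fact that both the $1212$- and $212$-reduction orders reach the same $N_{\varphi}$-value, and confluence of the whole system then follows since the rewriting terminates or, more robustly, since every word rewrites to $N_{\varphi}$ of itself.
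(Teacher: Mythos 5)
The paper itself gives no proof of this lemma; it is recalled verbatim from \cite[Lemma 5.1]{HO}, so your attempt can only be judged on its own merits. Your first two parts are fine: the argument that $\left(S_{+},R\right)$ presents $M$ (rules hold in $M$, and conversely they generate the defining congruence of Definition \ref{def:local-factorability-structure}(\ref{enu:local-factorability-structure-1}), stable pairs giving trivial relations), and the argument for strong reducedness (idempotency of $\varphi$ makes targets stable, a length-two source admits no other rule, letters admit no rule) are both correct.

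The confluence part, however, contains a genuine gap. Your primary route is: verify convergence of critical pairs on length-three words (local confluence), then pass to global confluence. That passage is exactly what fails here: Newman's lemma needs termination, and the rewriting system of this lemma is \emph{not} terminating in general --- the paper says so explicitly, both in the introduction and in the paragraph immediately following the lemma, citing \cite[Appendix]{HO} for a factorable monoid whose associated system admits infinite rewriting sequences. Without termination, local confluence does not imply confluence (there are standard two-letter counterexamples), so your hedge ``since the rewriting terminates'' is false, and the critical-pair analysis, even if carried out perfectly, proves nothing about confluence of the whole system.

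Your fallback clause --- ``every word rewrites to $N_{\varphi}$ of itself'' --- is the germ of the correct argument, but as stated it is both insufficient and unproven. Insufficient: if $w$ rewrites to $u$ and to $v$, knowing $u$ and $v$ each rewrite to their own $N_{\varphi}$-image produces a common descendant only if $N_{\varphi}\left(u\right)=N_{\varphi}\left(v\right)$, i.e. only if $N_{\varphi}$ is a rewriting invariant. This does hold, and cleanly: by Theorem \ref{thm:factorability-local-factorability}(\ref{enu:correspondence-eta-phi-4}) one has $N_{\varphi}\left(u\right)=\nf_{\eta}\left(\ev\left(u\right)\right)$, and rewriting steps preserve $\ev$; but you never invoke this, and without it the clause yields nothing. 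Unproven: the claim that every word rewrites, \emph{in the system $\left(S_{+},R\right)$}, to $N_{\varphi}$ of itself requires an induction on word length unwinding Definition \ref{def:normalisation-map}, matching each $\varphi_{i}$-application with an actual rewriting step (or with no step, for stable pairs) and absorbing the deleted $1$'s into the shortened rules; this, not the critical-pair bookkeeping, is where the real work lies. The correct skeleton is therefore: (a) $N_{\varphi}$ is invariant under rewriting; (b) every word rewrites to its $N_{\varphi}$-image, which is irreducible; hence any two rewriting sequences from $w$ extend to the common word $N_{\varphi}\left(w\right)$. Your proposal contains fragments of (b) but substitutes an invalid local-to-global step for the argument as a whole.
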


\begin{rem}
\label{rem:different-presentations}Observe that, strictly speaking,
the presentation given by the property~(\ref{enu:local-factorability-structure-1})
of Definition~\ref{def:local-factorability-structure} (of local
factorability structure) is not the same as the one obtained by turning
rewriting rules into equations in Lemma~\ref{lem:presentation-factorable-monoid}.
Namely, if $st$ lies in $S$ (resp. equals $1$), then the former
contains the relation $\left(s,t\right)=\left(st,1\right)$ (resp.
$\left(s,t\right)=\left(1,1\right)$), whereas the latter has $\left(s,t\right)=st$
(resp. $\left(s,t\right)=1$). However, one can obtain the latter
from the former simply by removing the rightmost occurrence of $1$,
and\emph{ vice versa}.
\end{rem}

The associated rewriting system in Lemma~\ref{lem:presentation-factorable-monoid}
is not necessarily terminating, even if $S$ is finite and $M$ is
left-cancellative. The reader is referred to \cite[Appendix]{HO}
for an example of a factorable monoid whose associated rewriting system
is not terminating. The following result gives a sufficient condition
for the rewriting system associated with a factorable monoid to be
terminating. 
\begin{thm}[{\cite[Theorem 7.3]{HO}}]
\label{thm:stronger-conditions-terminating}Let $\left(M,S,\eta\right)$
be a factorable monoid. If the equalities
\[
\eta'\left(sf\right)=\eta'\left(s\cdot\eta'\left(f\right)\right),\quad\overline{\eta}\left(sf\right)=\overline{\eta}\left(s\cdot\eta'\left(f\right)\right)\cdot\overline{\eta}\left(f\right)
\]
hold for all $s$ in $S_{+}$ and $f$ in $M$, then the associated
rewriting system is terminating.
\end{thm}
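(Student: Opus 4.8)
The plan is to show that under the additional assumption, the total $S$-length of a word strictly decreases with each rewriting step, so that no infinite rewriting sequence can exist. The rewriting rules from Lemma \ref{lem:presentation-factorable-monoid} replace a non-stable length-two factor $(s,t)$ by $\eta\mu(s,t)=\eta(st)$; by the factorisation-map axioms the pair $\eta(st)=(\eta'(st),\overline\eta(st))$ is geodesic, so $|\eta'(st)|+|\overline\eta(st)|=|st|\le |s|+|t|$. This already shows no rewriting step increases length, but length alone is not strictly monotone (a step can preserve total length while merely rearranging letters), so a naive length argument fails and some finer measure is needed.

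The key idea I would pursue is that the additional assumption forces the normal form to be computable by a single left-to-right sweep, and more precisely that repeatedly applying $F_1$ (splitting off the leftmost generator and pushing the remainder rightward) interacts predictably with the rest of the word. Concretely, I would introduce a well-founded order on $S_+$-words — not simply by length, but lexicographically by the sequence of lengths of remaining suffixes, or by a weight that records how far each letter is from its final normalised position. The two equalities $\eta'(sf)=\eta'(s\cdot\eta'(f))$ and $\overline\eta(sf)=\overline\eta(s\cdot\eta'(f))\cdot\overline\eta(f)$ are exactly the compatibility statements saying that the leftmost split of $sf$ depends only on $s$ and the leftmost letter $\eta'(f)$ of $f$, and that the tail factors cleanly; this is what lets one prove that once a prefix has been normalised, rewriting steps occurring further right never reactivate it.

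The step I expect to be the \textbf{main obstacle} is constructing the right termination measure and proving it strictly decreases on \emph{every} rewriting step, including steps where the total length is preserved. My plan for this is to argue that the additional assumption implies the rewriting system is equivalent to the ``leftmost-split'' normalisation: define, for a word $w=s_1|\cdots|s_p$, the measure to be the tuple $(|s_1 s_2\cdots s_p|,\,|s_2\cdots s_p|,\,\ldots)$ of suffix-evaluations' minimal lengths, or better a potential function counting pairs of positions that are not yet stable. Using the assumption I would show that applying $F_i$ at position $i$ either strictly lowers the number of non-stable adjacent pairs to the right of $i$ or strictly shortens the word; the assumption guarantees that normalising the prefix $s_1$ against $\eta'(f)$ does not create new instabilities deeper in the word, so the measure cannot loop. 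Equip $S_+^{\ast}$ with the lexicographic order on these tuples (well-founded because lengths are bounded by $p$), and conclude that every rewriting step decreases this order.

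Finally, I would assemble the pieces: each rewriting step corresponds to one application of $F$ at some position, the additional assumption localises its effect so that the chosen measure strictly decreases, and well-foundedness of the order on $S_+^{\ast}$ then forbids infinite rewriting sequences, giving termination. I would end by remarking that confluence and the presentation are already supplied by Lemma \ref{lem:presentation-factorable-monoid}, so termination is the only thing requiring the additional hypothesis.
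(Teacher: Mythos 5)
Your proposal is a plan rather than a proof: the step you yourself flag as the main obstacle---producing a well-founded measure that strictly decreases under \emph{every} rewriting step---is never carried out, and both concrete measures you suggest demonstrably fail, already in the simplest monoid satisfying the hypotheses. Take $M$ to be the free commutative monoid on $\left\{ a,b,c\right\} $ with $a<b<c$ and $\eta$ splitting off the least letter; the additional assumption (\ref{eq:the_stronger_conditions}) holds, and the rewriting system of Lemma \ref{lem:presentation-factorable-monoid} consists of the rules $\left(b,a\right)\rightarrow\left(a,b\right)$, $\left(c,a\right)\rightarrow\left(a,c\right)$, $\left(c,b\right)\rightarrow\left(b,c\right)$. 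Consider the rewriting step $c|a|b\rightarrow a|c|b$, i.e. the rule $\left(c,a\right)\rightarrow\left(a,c\right)$ applied at position $1$. The length is unchanged; the number of unstable adjacent pairs is unchanged (it equals $1$ before and after---indeed the unstable pair moves from position $1$ to position $2$, so the number of unstable pairs \emph{to the right of} the rewritten position increases from $0$ to $1$); and, because $M$ is graded, the tuple of minimal lengths of suffix evaluations equals $\left(3,2,1\right)$ both before and after. Hence your claimed disjunction (``applying $F_{i}$ either strictly lowers the number of non-stable adjacent pairs to the right of $i$ or strictly shortens the word'') is false, and neither proposed measure decreases on this step. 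The informal justification you offer---that assumption (\ref{eq:the_stronger_conditions}) prevents a rewriting step from creating new instabilities further right---is exactly what this example refutes: such steps do create instabilities to their right (unavoidably, since that is how letters migrate toward their final positions), and the real content of the theorem is that this propagation cannot continue forever. Nothing in your sketch supplies that argument.

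For comparison: the paper does not reprove this statement (it quotes \cite[Theorem 7.3]{HO}), but its own results yield an independent proof along a genuinely different route, which was available to you. The additional assumption implies condition (\ref{eq:stable_212_on_S^3}) (Corollary \ref{cor:stronger-conditions-imply-stable-212-on-S^3}); by Proposition \ref{prop:stronger-factorable-implies-(4,3)} the corresponding quadratic normalisation is then of class $\left(4,3\right)$; and Proposition \ref{prop:(4,3)-converges} together with Corollary \ref{cor:immediate-consequence}(\ref{enu:immediate-consequence-3}) gives termination of the associated rewriting system. Note also that the termination result for class $\left(4,3\right)$ only bounds rewriting sequences starting from words of length $p$ by $2^{p}-p-1$; this exponential bound is a further warning that one should not expect a position-local counting measure of the kind you propose, whose values on length-$p$ words are polynomially bounded, to decrease at every step.
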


\section{\protect\label{sec:quadratic}Quadratic normalisations}

This section recalls the notion of quadratic normalisation. After
presenting basic notions concerning normalisation in Subsection~\ref{subsec:normalisations-and-normal-forms},
we recollect the notion of quadratic normalisation in Subsection~\ref{subsec:quadratic}.
Then we focus on a particular class of quadratic normalisations in
Subsection~\ref{subsec:class-(4,3)}. The rewriting system associated
with a quadratic normalisation is recalled in Subsection~\ref{subsec:normalisation-rewriting}.
Finally, Subsection~\ref{subsec:left-weighted} recalls the notion
of left-weighted normalisation. For technical elaboration, see \cite{DG}.

\subsection{\protect\label{subsec:normalisations-and-normal-forms}Normalisation
and normal form}

Having already hinted in the Introduction that a normalisation is
a syntactic transformation of an arbitrary word into a normal one,
here we recall a formal definition.
\begin{defn}
\label{def:normalisation}Let $A$ be a set, and let $N$ be a map
from $A^{\ast}$ to itself. The pair $\left(A,N\right)$ is called
a \textbf{normalisation} if
\begin{enumerate}
\item \label{enu:normalisation-1}$N$ is length-preserving,
\item \label{enu:normalisation-2}restriction of $N$ to $A$ is the identity
map,
\item \label{enu:normalisation-3}the equality
\[
N\left(u|v|w\right)=N\left(u|N\left(v\right)|w\right)
\]
holds for all $A$-words $u$, $v$, $w$.
\end{enumerate}
\end{defn}

The map $N$ is called a \textbf{normalisation map}. An $A$-word
$w$ such that $N\left(w\right)=w$ is called \textbf{$N$-normal}.

A \textbf{normalisation for a monoid} $M$ is a normalisation $\left(S_{+},N\right)$
such that $M$ admits the presentation
\[
\left\langle S_{+}\vert\left\{ w=N\left(w\right)\vert w\in S_{+}^{\ast}\right\} \right\rangle .
\]

\begin{rem}
Recall that the usage of the letter $S$, as well as the subscript
$+$ in $S_{+}$ (or the absence of it), is explained by Convention~\ref{con:single-letter-for-1}.
The letter $S$ is not used in Definition~\ref{def:normalisation}
(of normalisation), in order to point out that normalisation itself
is a purely syntactic notion, as opposed to the notion of normalisation
for a monoid.
\end{rem}

\begin{example}[{\cite[Example 2.2]{DG}}]
\label{exa:lexicographic}Assume that $\left(A,<\right)$ is a totally
ordered nonempty finite set. Let $<^{\ast}$ denote the lexicographic
extension of $<$ to $A^{\ast}$. The image under $N$ of an $A$-word
$w$ is defined as the $<^{\ast}$-minimal word obtained by permuting
letters of $w$. Then $\left(A,N\right)$ is a normalisation for
the free commutative monoid over $A$.

\end{example}

Note that the definition of normalisation for a monoid $M$ implies
that there is a nontrivial monoid homomorphism, also known as grading,
from $M$ to the additive monoid of nonnegative integers, such that
the degree, i.e.\ image under grading, of every $s$ in $S_{+}$
equals $1$. Namely, set the degree of $f$ in $M$ to be the common
length of all the $S_{+}$-words representing $f$ (which is well-defined
due to the property~(\ref{enu:normalisation-1}) of normalisation).
Such monoids $\left(M,S_{+}\right)$ are called \textbf{graded}. In
other words, $M$ is graded with respect to a generating set $S_{+}$
if all $S_{+}$-words representing the same element of $M$ are equal
in length.

\begin{rem}[{\cite[Proposition 2.6]{DG}}]
\label{rem:normalisation-normal-form-graded}In a graded monoid,
a normalisation and a normal form are just different aspects of looking
at the same notion. Indeed, given a normalisation $\left(S_{+},N\right)$
for a monoid $M$, a normal form $\nf$ for $M$ with respect to $S_{+}$
is obtained by setting $\nf\left(f\right)=N\left(w\right)$ for any
$S_{+}$-word $w$ representing $f$. Conversely, if $\nf$ is a normal
form for a graded monoid $M$ with respect to a generating set $S_{+}$,
then one obtains a normalisation map by setting $N\left(w\right)=\nf\left(\ev\left(w\right)\right)$.
These two correspondences are inverse to each other.
\end{rem}

On the other hand, if a monoid $\left(M,S_{+}\right)$ is not graded,
i.e.\ if an element of $M$ may be represented by $S_{+}$-words
of different lengths, then a new letter representing $1$ can be introduced
to formally preserve length. For a normalisation $\left(S,N\right)$,
we say that an element $e$ of $S$ is \textbf{$N$-neutral} if the
equalities
\begin{equation}
N\left(w|e\right)=N\left(e|w\right)=N\left(w\right)|e\label{eq:neutral}
\end{equation}
hold for every $S$-word $w$. We say that $\left(S,N\right)$ is
a \textbf{normalisation mod $e$ for a monoid $M$} if $e$ is an
$N$-neutral element of $S$ and $M$ admits the presentation
\begin{equation}
\left\langle S\vert\left\{ w=N\left(w\right)\vert w\in S^{\ast}\right\} \cup\left\{ e=1\right\} \right\rangle .\label{eq:presentation_mod}
\end{equation}
Note that there can be at most one $N$-neutral element. We write
$\pi_{e}$ for the canonical projection from $S^{\ast}$ onto $S_{+}^{\ast}$,
which removes all the occurrences of $e$. This extends the equivalence
between normalisation and normal form from graded monoids to monoids
in general.
\begin{prop}[{\cite[Proposition 2.9]{DG}}]
\label{prop:normalisation-normal-form}~
\begin{enumerate}
\item If $\left(S,N\right)$ is a normalisation mod $e$ for a monoid $M$,
then a geodesic normal form $\nf$ for $M$ with respect to $S$ is
obtained by $\nf\left(f\right)=\pi_{e}\left(N\left(w\right)\right)$
for any $S$-word $w$ representing $f$.
\item \label{enu:normalisation-normal-form-2}Conversely, let $\nf$ be
a geodesic normal form for a monoid $M$ with respect to a generating
set $S_{+}$. Write $\ev^{e}$ for the extension of the evaluation
map $\ev:S_{+}^{\ast}\to M$ to $S^{\ast}$ by putting $\ev^{e}\left(e\right)=1$.
Then a normalisation $\left(S,N\right)$ mod $e$ for $M$ is provided
by the map 
\[
N\left(w\right)=\nf\left(\ev^{e}\left(w\right)\right)|e^{m},
\]
with $m$ denoting the number of letters $e$ to be added in order
to formally preserve length, namely $m=\left|w\right|-\left|\nf\left(\ev^{e}\left(w\right)\right)\right|$.
\item These two correspondences are inverse to each other.
\end{enumerate}
\end{prop}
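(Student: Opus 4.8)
The plan is to prove the three assertions in turn, the first two by verifying the defining axioms directly and the third by computing the two composites. Throughout I will use that $N$ is idempotent, which is immediate from property \ref{enu:normalisation-3} of Definition \ref{def:normalisation} applied with empty $u$ and $w$.

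For assertion (1), the first task is to check that $\nf(f)=\pi_{e}(N(w))$ does not depend on the chosen representative $w$. Two $S$-words $u,v$ satisfy $\ev^{e}(u)=\ev^{e}(v)$ exactly when they are connected by a chain of the defining relations from \eqref{eq:presentation_mod}, so it suffices to see that $\pi_{e}\circ N$ is constant on each elementary relation. For a relation $x|y|z\leftrightarrow x|N(y)|z$ the two sides already have equal $N$-image by property \ref{enu:normalisation-3}. For the relation $x|e|z\leftrightarrow x|z$ I would use the neutrality equalities \eqref{eq:neutral} together with property \ref{enu:normalisation-3} to compute $N(x|e|z)=N(x|z)|e$, after which applying $\pi_{e}$ kills the trailing $e$ and yields equality. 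Once well-definedness is in place, the section property follows because $N(w)=w$ and $e=1$ hold in $M$, so $\ev(\pi_{e}(N(w)))=f$; and geodesy follows from length-preservation (property \ref{enu:normalisation-1}): $\left|\nf(f)\right|=\left|\pi_{e}(N(w))\right|\leq\left|N(w)\right|=\left|w\right|$ for every $S$-word $w$ representing $f$.

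For assertion (2), I first note $m\geq0$, so that $N$ is well defined and length-preserving: since $\nf$ is geodesic and $\pi_{e}(w)$ is an $S_{+}$-word representing $\ev^{e}(w)$, one has $\left|\nf(\ev^{e}(w))\right|\leq\left|\pi_{e}(w)\right|\leq\left|w\right|$. Property \ref{enu:normalisation-2} for the letter $e$ is the computation $N(e)=\nf(1)|e=e$, while for $s\in S_{+}$ it reduces to $\nf(\ev(s))=s$; this last equality uses that a geodesic normal form fixes each generator, which is transparent under the generating-subset viewpoint of Remark \ref{rem:generating-(sub)set}. Property \ref{enu:normalisation-3} and the neutrality \eqref{eq:neutral} both follow from the single observation that $\ev^{e}(N(v))=\ev^{e}(v)$, whence $\ev^{e}$ is unchanged by replacing a factor $v$ with $N(v)$ or by inserting/deleting $e$; combined with length-preservation this forces the $\nf$-parts to agree and only shifts the exponent of the trailing $e$-block. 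Finally the presentation \eqref{eq:presentation_mod} holds because each relation $w=N(w)$ is valid in $M$ and, conversely, any two $S$-words with the same image have equal $\nf$-part and hence are joined through their normal forms using $e=1$.

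The heart of the argument, and the step I expect to be the main obstacle, is assertion (3), which hinges on a structural lemma: in any normalisation mod $e$, every value $N(w)$ has all its occurrences of $e$ gathered at the right end, i.e. $N(w)=\pi_{e}(N(w))|e^{k}$. I would prove this by strong induction on $\left|w\right|$. If $w$ contains an $e$, writing $w=a|e|b$ with $a$ free of $e$, the neutrality relations \eqref{eq:neutral} and property \ref{enu:normalisation-3} give $N(w)=N(a|b)|e$, and the inductive hypothesis applies to the shorter word $a|b$. If $w$ is $e$-free and $N(w)$ nonetheless displayed an $e$ immediately followed by a non-$e$ letter, say $N(w)=p|e|r$ with $r$ beginning in $S_{+}$, the same relations yield $N(w)=N(p|r)|e$; but $\left|p|r\right|<\left|w\right|$, so by induction $N(p|r)$ already has its $e$'s at the right, contradicting the assumed interior $e$. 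Granting this lemma, the two composites are immediate: starting from $(S,N)$ and forming $\nf$ then $N'$ returns $N'(w)=\pi_{e}(N(w))|e^{\#_{e}(N(w))}=N(w)$ by the lemma; and starting from $\nf$, forming $N$ and then $\nf'$, one evaluates on $w=\nf(f)$ to get $N(\nf(f))=\nf(f)$ (here $m=0$) and hence $\nf'(f)=\pi_{e}(\nf(f))=\nf(f)$.
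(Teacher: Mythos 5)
The paper itself offers no proof of this proposition: it is quoted verbatim from \cite[Proposition 2.9]{DG}, so there is no in-paper argument to compare yours against. Judged on its own merits, your proof is correct and follows the natural route: verify the axioms directly for assertions (1) and (2), and reduce assertion (3) to the structural lemma that in a normalisation with $N$-neutral element $e$, every image $N\left(w\right)$ has all its occurrences of $e$ gathered at the right end. Your induction proving that lemma is sound; in particular, the computation $N\left(p|e|r\right)=N\left(p|r\right)|e$ uses only the neutrality equalities (\ref{eq:neutral}) and property (\ref{enu:normalisation-3}) of Definition \ref{def:normalisation} (and does not need $p$ to be free of $e$), and the contradiction in the $e$-free case is genuine: idempotency gives $p|e|r=N\left(p|r\right)|e$, and writing $N\left(p|r\right)=q|e^{j}$ with $q$ free of $e$ forces every letter to the right of the displayed $e$ to equal $e$, contradicting that $r$ begins with a non-$e$ letter. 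Given the lemma, both composites collapse exactly as you say.

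Two fine points, neither fatal. First, in part (2) the equality $\nf\left(\ev\left(s\right)\right)=s$ for $s\in S_{+}$ is not a formal consequence of ``geodesic section'': geodesy only forces $\nf\left(\ev\left(s\right)\right)$ to be a single letter $t$ with $\ev\left(t\right)=\ev\left(s\right)$, and one needs distinct letters of $S_{+}$ to evaluate to distinct, non-identity elements of $M$ to conclude $t=s$. That is exactly the generating-subset viewpoint of Remark \ref{rem:generating-(sub)set} that you invoke, so your appeal to it is doing real work rather than being a formality; with an abstract generating set in which two letters represent the same element, the statement of (2) would literally fail. Second, your computation $N\left(e\right)=\nf\left(1\right)|e=e$ silently uses $\nf\left(1\right)=\left(\right)$, which does hold, but only because geodesy applied to the empty word forces $\left|\nf\left(1\right)\right|\leq0$; this deserves a sentence.
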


\begin{rem}[{\cite[Remark 2.10]{DG}}]
\label{rem:graded-non-graded}If a monoid $\left(M,S_{+}\right)$
is graded and $\nf$ is a normal form on $\left(M,S_{+}\right)$,
then there are two normalisations arising from $\nf$, provided by
Remark~\ref{rem:normalisation-normal-form-graded} and Proposition~\ref{prop:normalisation-normal-form}(\ref{enu:normalisation-normal-form-2}),
respectively. To make a clear distinction, let us temporarily write
$N_{+}$ for the normalisation map of the former. The normalisations
$\left(S_{+},N_{+}\right)$ and $\left(S,N\right)$ are closely related
as the map $\pi_{e}\circ N$ is identically equal to the map $N_{+}\circ\pi_{e}$.
Therefore, it suffices to formally consider normalisations mod $e$
for monoids.
\end{rem}

\subsection{\protect\label{subsec:quadratic}Quadratic normalisations}

Let us extend Notation~\ref{not:at-positions}.

\begin{notation*}For a finite sequence of positive integers $u=\left(i_{1},\ldots,i_{n}\right)$,
we denote the composite map $F_{i_{n}}\circ\cdots\circ F_{i_{1}}$
(note that $F_{i_{1}}$ is applied first) by $F_{u}$, often omitting
commas in $u$ if all its components are single-digit numbers.\end{notation*}

If $\left(A,N\right)$ is a normalisation, let $\overline{N}$ denote
the restriction of $N$ to the set of all length-two $A$-words.
\begin{defn}
\label{def:quadratic}A normalisation $\left(A,N\right)$ is \textbf{quadratic}
if the following two requirements are met.
\begin{enumerate}
\item \label{enu:quadratic-1}An $A$-word $w$ is $N$-normal (meaning
$N\left(w\right)=w$) if, and only if, every length-two factor of
$w$ is $N$-normal.
\item \label{enu:quadratic-2}For every $A$-word $w$, there exists a finite
sequence of positions $u$, depending on $w$, such that $N\left(w\right)=\overline{N}_{u}\left(w\right)$.
\end{enumerate}
\end{defn}

\begin{example}
The normalisation given in Example~\ref{exa:lexicographic} is quadratic.
Indeed, a word is $<^{\ast}$-minimal if, and only if, its every length-two
factor is $<^{\ast}$-minimal; and each word can be ordered by switching
pairs of adjacent letters that are not ordered as expected.
\end{example}

An advantage of a quadratic normalisation is that it is completely
determined by the restriction $\overline{N}$.
\begin{prop}[{\cite[Proposition 3.6]{DG}}]
\label{prop:quadratic-presentation}~
\begin{enumerate}
\item \label{enu:quadratic-presentation-1}If $\left(S_{+},N\right)$ is
a quadratic normalisation for a monoid $M$, then $\overline{N}$
is idempotent and $M$ admits the presentation
\[
\left\langle S_{+}\vert\left\{ s|t=\overline{N}\left(s|t\right)\vert s,t\in S_{+}\right\} \right\rangle .
\]
\item \label{enu:quadratic-presentation-2}If $\left(A,N\right)$ is a quadratic
normalisation, then an element $e$ of $A$ is $N$-neutral if, and
only if, the following equalities hold for every $s$ in $A$:
\[
\overline{N}\left(s|e\right)=\overline{N}\left(e|s\right)=\overline{N}\left(s\right)|e.
\]
\item \label{enu:quadratic-presentation-3}If $\left(S,N\right)$ is a quadratic
normalisation mod $e$ for a monoid $M$, then $\overline{N}$ is
idempotent and $M$ admits the presentation
\[
\left\langle S_{+}\vert\left\{ s|t=\pi_{e}\left(\overline{N}\left(s|t\right)\right)\vert s,t\in S_{+}\right\} \right\rangle .
\]
\end{enumerate}
\end{prop}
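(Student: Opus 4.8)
The plan is to handle the three statements in turn, first isolating the idempotency of $\overline{N}$, which is common to (1) and (3). Setting $u$ and $w$ empty in property (\ref{enu:normalisation-3}) of Definition \ref{def:normalisation} gives $N\circ N=N$, so $N$ is idempotent on all words; since $N$ is length-preserving (property (\ref{enu:normalisation-1})) it sends length-two words to length-two words, whence its restriction $\overline{N}:A^{2}\to A^{2}$ satisfies $\overline{N}\circ\overline{N}=\overline{N}$. For the presentation in part (1), the monoid $M$ already admits $\left\langle S_{+}\mid\{w=N(w)\mid w\in S_{+}^{\ast}\}\right\rangle$ by definition of a normalisation for a monoid, so it suffices to show this congruence equals the one generated by the length-two relations $s|t=\overline{N}(s|t)$. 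One inclusion is immediate, as each such relation is the instance $w=N(w)$ with $w=s|t$. For the reverse I would invoke the quadratic property (\ref{enu:quadratic-2}): for each $w$ there is a finite sequence $u$ with $N(w)=\overline{N}_{u}(w)$, and each factor $\overline{N}_{i}$ merely replaces a length-two factor $s|t$ by $\overline{N}(s|t)$, hence is a consequence of the length-two relations; composing the steps shows $w=N(w)$ modulo those relations.

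For part (2) I would prove the two implications separately. In the forward direction, restricting the neutrality equalities (\ref{eq:neutral}) to single-letter words $w=s$ and using $N(s)=s$ (property (\ref{enu:normalisation-2})) yields exactly $\overline{N}(s|e)=\overline{N}(e|s)=\overline{N}(s)|e$, since all three words have length two. The converse is the technical core. Assuming the length-two condition, property (\ref{enu:normalisation-3}) with empty prefix gives $N(w|e)=N(N(w)|e)$; I would then show $N(w)|e$ is $N$-normal via the locality criterion (\ref{enu:quadratic-1}), its only new length-two factor being the concatenation of the last letter of $N(w)$ with $e$, which is $\overline{N}$-fixed by hypothesis, so $N(w|e)=N(w)|e$. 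For the left identity $N(e|w)=N(w)|e$ I would use that any application $\overline{N}_{i}$ preserves the value of $N$ (again from property (\ref{enu:normalisation-3})) and repeatedly apply $\overline{N}(e|s)=s|e$ to transport $e$ rightward through $w$, converting $e|w$ into $w|e$ without changing $N$; combined with the previous identity this gives $N(e|w)=N(w|e)=N(w)|e$.

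For part (3) I would start from the presentation mod $e$ in (\ref{eq:presentation_mod}) and first use the relation $e=1$ to eliminate the generator $e$ by a Tietze transformation, which applies $\pi_{e}$ to every relation and turns $w=N(w)$ into $\pi_{e}(w)=\pi_{e}(N(w))$ over the generating set $S_{+}$. I would then rerun the reduction of part (1) while tracking $\pi_{e}$: expanding $N(w)=\overline{N}_{u}(w)$, each projected length-two step either has both letters in $S_{+}$, producing a relation $s|t=\pi_{e}(\overline{N}(s|t))$, or involves $e$, in which case the neutrality relations established in part (2) force $\pi_{e}(\overline{N}(s|t))=\pi_{e}(s|t)$ and the step becomes trivial. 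Hence the congruence collapses to the one generated by $\{s|t=\pi_{e}(\overline{N}(s|t))\mid s,t\in S_{+}\}$, giving the claimed presentation.

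I expect the main obstacle to be the converse direction of part (2), where neutrality for arbitrary words must be reconstructed from the purely length-two data. The decisive device is that every $\overline{N}$-step leaves $N$ unchanged, which both lets $e$ be transported across a word and, together with the locality criterion guaranteeing that $N(w)|e$ is already normal, closes the argument; everything else reduces to bookkeeping with $\pi_{e}$ and the quadratic property.
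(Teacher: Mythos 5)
Your proof is correct; note that the paper itself gives no proof of this statement, since it is recalled verbatim from \cite[Proposition 3.6]{DG}, so the only comparison available is with the source. Your argument --- idempotency of $\overline{N}$ from Definition \ref{def:normalisation}(\ref{enu:normalisation-3}) with empty outer words, reduction of the defining presentation to the length-two relations via the quadratic axiom (\ref{enu:quadratic-2}), the locality criterion (\ref{enu:quadratic-1}) plus rightward transport of $e$ for the neutrality characterisation, and Tietze elimination of $e$ with case analysis on projected rewriting steps for part (3) --- is essentially the standard proof given there, and all steps check out.
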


If $\left(A,N\right)$ is a quadratic normalisation, then the image
under $N$ of an $A$-word is computed by sequentially applying $\overline{N}$
to length-two factors at various positions. For length-three words,
there are only two such positions. Since $\overline{N}$ is idempotent,
it suffices to consider alternating sequences of positions. This motivates
the notion of class, which measures the complexity of normalising
length-three words. For a nonnegative integer $m$, we write $12\left[m\right]$
(resp. $21\left[m\right]$) for the alternating sequence $121\ldots$
(resp. $212\ldots$) of length $m$. A quadratic normalisation $\left(A,N\right)$
is said to be of \textbf{left-class} $m$ (resp. \textbf{right-class}
$n$) if the equality $N\left(w\right)=\overline{N}_{12\left[m\right]}\left(w\right)$
(resp. $N\left(w\right)=\overline{N}_{21\left[n\right]}\left(w\right)$)
holds for every length-three $A$-word $w$. A quadratic normalisation
$\left(A,N\right)$ is of \textbf{class} $\left(m,n\right)$ if it
is of left-class $m$ and of right-class $n$.
\begin{example}
\label{exa:lexicographic-(3,3)}Let us consider the quadratic normalisation
given in Example~\ref{exa:lexicographic}. If $A$ has only one element,
then there is only one $A$-word of length $3$ and it is $N$-normal.
So, $\left(A,N\right)$ is of class $\left(0,0\right)$ in this case.

If $A$ has at least two elements, then, for every length-three $A$-word
$w$, the words $\overline{N}_{121}\left(w\right)$ and $\overline{N}_{212}\left(w\right)$
are $N$-normal. Namely, assuming that the word $a|b|c$ is normal,
compute $\overline{N}_{121}$ and $\overline{N}_{212}$ of $c|b|a$
(which provides the worst-case scenario):
\[
c|b|a\stackrel{\overline{N}_{1}}{\mapsto}b|c|a\stackrel{\overline{N}_{2}}{\mapsto}b|a|c\stackrel{\overline{N}_{1}}{\mapsto}a|b|c,\quad c|b|a\stackrel{\overline{N}_{2}}{\mapsto}c|a|b\stackrel{\overline{N}_{1}}{\mapsto}a|c|b\stackrel{\overline{N}_{2}}{\mapsto}a|b|c.
\]
Therefore, $\left(A,N\right)$ is of class $\left(3,3\right)$ in
this case. A smaller class cannot be obtained in general, as witnessed
by $\overline{N}_{12}\left(b|b|a\right)=b|a|b$ and $\overline{N}_{21}\left(b|a|a\right)=a|b|a$,
with $a$ and $b$ being any two $A$-letters such that $a<b$.
\end{example}

A class of a quadratic normalisation $\left(A,N\right)$ can be characterised
by relations involving only the restriction $\overline{N}$.
\begin{prop}[{\cite[Proposition 3.14]{DG}}]
\label{prop:class-characterisation}A quadratic normalisation $\left(A,N\right)$
is
\begin{enumerate}
\item of left-class $m$ if, and only if, the following three maps coincide
on $A^{3}$:
\[
\overline{N}_{12\left[m\right]},\quad\overline{N}_{12\left[m+1\right]},\quad\overline{N}_{21\left[m+1\right]}.
\]
\item \label{enu:class-characterisation-2}of right-class $n$ if, and only
if, the following three maps coincide on $A^{3}$:
\[
\overline{N}_{21\left[n\right]},\quad\overline{N}_{21\left[n+1\right]},\quad\overline{N}_{12\left[n+1\right]}.
\]
\item of class $\left(m,m\right)$ if, and only if, the map $\overline{N}_{12\left[m\right]}$
coincides with the map $\overline{N}_{21\left[m\right]}$ on $A^{3}$.
\end{enumerate}
\end{prop}

The \textbf{minimal left-class} of $\left(A,N\right)$ is the smallest
natural number $m$ such that $\left(A,N\right)$ is of left-class
$m$ if such $m$ exists, and $\infty$ otherwise. The minimal right-class
$n$ of $\left(A,N\right)$ is defined analogously. Then the minimal
class of $\left(A,N\right)$ is the pair $\left(m,n\right)$.
\begin{lem}[{\cite[Lemma 3.13]{DG}}]
\label{lem:minimal-class}The minimal class of quadratic normalisation
is either of the form $\left(m,n\right)$ with $\left|m-n\right|\leq1$,
or $\left(\infty,\infty\right)$.
\end{lem}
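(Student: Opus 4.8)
The plan is to reduce everything to the characterisation of classes by equalities among the iterated maps, provided by Proposition \ref{prop:class-characterisation}, and then to play the left and the right conditions against each other. I abbreviate $L_k \coloneqq \overline{N}_{12[k]}$ and $R_k \coloneqq \overline{N}_{21[k]}$, all equalities of maps being understood to hold on $A^3$.

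First I would record the immediate consequence of the definitions: $(A,N)$ is of left-class $m$ precisely when $N = L_m$, and of right-class $n$ precisely when $N = R_n$. The key step is the implication that a finite minimal left-class $m$ forces the minimal right-class to be at most $m+1$. Indeed, since $(A,N)$ is then of left-class $m$, Proposition \ref{prop:class-characterisation}(1) gives $L_m = R_{m+1}$; combined with $N = L_m$ this yields $N = R_{m+1}$, which says exactly that $(A,N)$ is of right-class $m+1$, so its minimal right-class does not exceed $m+1$. The mirror implication---a finite minimal right-class $n$ forces the minimal left-class to be at most $n+1$---follows identically from Proposition \ref{prop:class-characterisation}(2), which gives $R_n = L_{n+1}$ and hence $N = L_{n+1}$.

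It then remains to assemble these two one-sided bounds. If both minimal classes $m$ and $n$ are finite, the two implications give $n \le m+1$ and $m \le n+1$, whence $|m-n| \le 1$. If instead $m$ is finite while $n = \infty$, the first implication forces $n \le m+1 < \infty$, a contradiction; symmetrically, a finite $n$ forces a finite $m$. Thus either both minimal classes are finite, and the pair has the form $(m,n)$ with $|m-n| \le 1$, or both are infinite, giving $(\infty,\infty)$.

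I do not expect a serious obstacle, since the heavy lifting is done by Proposition \ref{prop:class-characterisation}. The only points demanding care are the bookkeeping of indices---checking that left-class $m$ genuinely produces the equality $L_m = R_{m+1}$ rather than an off-by-one variant---and keeping the quantifier over all length-three words uniform, so that a minimal class of $\infty$ is correctly read as ``no finite number of steps normalises every length-three word simultaneously'' rather than as a failure to normalise individual words.
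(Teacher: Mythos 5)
Your proof is correct, but note that there is no internal proof to compare against: the paper imports Lemma \ref{lem:minimal-class} verbatim from \cite{DG} (Lemma 3.13 there) without proving it. Your route---deducing the lemma from Proposition \ref{prop:class-characterisation}---is sound, and your index bookkeeping is right: left-class $m$ means $N=\overline{N}_{12\left[m\right]}$ on $A^{3}$, Proposition \ref{prop:class-characterisation}(1) then gives $\overline{N}_{12\left[m\right]}=\overline{N}_{21\left[m+1\right]}$, hence $N=\overline{N}_{21\left[m+1\right]}$, i.e. the minimal right-class is at most $m+1$; together with the symmetric bound this yields $|m-n|\leq1$ when both are finite and rules out exactly one of the two being infinite. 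The one caveat worth recording is circularity relative to the source: in \cite{DG} this lemma precedes the characterisation (Proposition 3.14 there), so your argument is only admissible if the forward implications of the characterisation---the only parts you use---do not themselves rest on the lemma. They do not: they follow from the one-line computation $\overline{N}_{21\left[m+1\right]}=\overline{N}_{12\left[m\right]}\circ\overline{N}_{2}$ combined with $N\circ\overline{N}_{2}=N$ (property (\ref{enu:normalisation-3}) of Definition \ref{def:normalisation}), and that computation is also precisely the direct proof of the lemma given in \cite{DG}. So your proof is a legitimate repackaging of the original argument: routing it through Proposition \ref{prop:class-characterisation} lets that proposition absorb the index-juggling you were worried about, at the cost of invoking a statement strictly stronger than what the lemma needs.
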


For example, the minimal class of the quadratic normalisation considered
in Example~\ref{exa:lexicographic-(3,3)} is $\left(3,3\right)$
if the generating set has at least two elements.

\subsection{\protect\label{subsec:class-(4,3)}Quadratic normalisations of class
$\left(4,3\right)$}

The class $\left(4,3\right)$ has particularly nice computational
properties (see \cite[Section 4]{DG}), not shared by higher classes
(see \cite[Example 3.23]{DG}), thanks to the diagrammatic tool called
the domino rule. Let $A$ be a set, and let $F$ be a map from $A^{2}$
to itself. We say that the \textbf{domino rule} is valid for $F$
if, for all $r_{1}$, $r_{2}$, $r_{1}'$, $r_{2}'$, $s_{0}$, $s_{1}$,
$s_{2}$ in $A$ such that $F\left(s_{0}|r_{1}\right)=r_{1}'|s_{1}$
and $F\left(s_{1}|r_{2}\right)=r_{2}'|s_{2}$, the following implication
holds: if $r_{1}|r_{2}$ and $r_{1}'|s_{1}$ and $r_{2}'|s_{2}$
are fixed points of $F$, then so is $r_{1}'|r_{2}'$. The domino
rule is expressed by the commutative diagram\begin{equation}\label{eq:domino}
\begin{tikzcd}[row sep=large, column sep=large] \bullet \arrow[r, shorten <=-2pt,  shorten >=-2pt, "r'_{1}"] \arrow[r, phantom, ""{name=s1'}', near end] \arrow[d, "s_{0}"'] & \bullet \arrow[r, shorten <=-2pt,  shorten >=-2pt, "r'_{2}"] \arrow[r, phantom, ""{name=s2'_1}, near start] \arrow[r, phantom, ""{name=s2'_2}', near end] \arrow[d, shorten <=-2pt,  shorten >=-2pt, "s_{1}"] \arrow[d, phantom, ""{name=t1}', near start] & \bullet \arrow[d, shorten <=-2pt,  shorten >=-2pt, "s_{2}"] \arrow[d, phantom, ""{name=t2}', near start] \\ \bullet \arrow[r, shorten <=-2pt,  shorten >=-2pt, "r_{1}"] \arrow[r, phantom, ""{name=s1}', near end] & \bullet \arrow[r, shorten <=-2pt, shorten >=-2pt, "r_{2}"] \arrow[r, phantom, ""{name=s2}', near start] & \bullet \arrow[from=s1', to=t1, shorten <=-2pt,  shorten >=-2pt, bend right, dash] \arrow[from=s1', to=s2'_1, shorten <=-2pt,  shorten >=-2pt, bend left=70, dash, dashed] \arrow[from=s2'_2, to=t2, shorten <=-2pt,  shorten >=-2pt, bend right, dash]  \arrow[from=s1, to=s2, shorten <=-2pt,  shorten >=-2pt, bend right=70, dash] \end{tikzcd}
\end{equation}where arcs denote fixed points of $F$: the solid ones are assumptions,
and the dashed one is the expected conclusion.
\begin{prop}[{\cite[Lemma 4.2]{DG}}]
\label{prop:quadratic-iff-domino}A quadratic normalisation $\left(A,N\right)$
is of class $\left(4,3\right)$ if, and only if, the domino rule
is valid for $\overline{N}$.

\end{prop}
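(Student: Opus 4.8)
The plan is to read the domino diagram \eqref{eq:domino} as a record of the reduction $\overline{N}_{212}$ performed on a length-three word, and to run this identification in both directions. Throughout, write $F=\overline{N}$ and recall two facts: a length-two word is $N$-normal if and only if it is a fixed point of $\overline{N}$; and, by property~(\ref{enu:quadratic-1}) of Definition~\ref{def:quadratic}, a length-three word is $N$-normal if and only if both of its length-two factors are. I also record, directly from the normalisation axioms, that $N(w)$ is always $N$-normal and that $N\bigl(\overline{N}_{i}(w)\bigr)=N(w)$ for every position $i$; hence if $\overline{N}_{212}(w)$ happens to be $N$-normal, it must coincide with $N(w)$.

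For the implication from class $\left(4,3\right)$ to the domino rule, suppose we are given $t_{0},s_{1},s_{2},s_{1}',s_{2}',t_{1},t_{2}$ with $\overline{N}\left(t_{0}|s_{1}\right)=s_{1}'|t_{1}$ and $\overline{N}\left(t_{1}|s_{2}\right)=s_{2}'|t_{2}$, where $s_{1}|s_{2}$, $s_{1}'|t_{1}$ and $s_{2}'|t_{2}$ are all normal. I would apply the definition of right-class $3$, available since $\left(A,N\right)$ is of class $\left(4,3\right)$, to the word $w=t_{0}|s_{1}|s_{2}$:
\[
t_{0}|s_{1}|s_{2}\stackrel{\overline{N}_{2}}{\mapsto}t_{0}|s_{1}|s_{2}\stackrel{\overline{N}_{1}}{\mapsto}s_{1}'|t_{1}|s_{2}\stackrel{\overline{N}_{2}}{\mapsto}s_{1}'|s_{2}'|t_{2},
\]
the first step fixing $s_{1}|s_{2}$ by normality and the other two being the given factorisations. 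Thus $N\left(w\right)=\overline{N}_{212}\left(w\right)=s_{1}'|s_{2}'|t_{2}$ is $N$-normal, so its length-two factor $s_{1}'|s_{2}'$ is normal, which is exactly the domino conclusion.

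For the converse, assume the domino rule holds for $\overline{N}$; it suffices to prove that $\left(A,N\right)$ is of right-class $3$, for then Lemma~\ref{lem:minimal-class} forces the minimal class to be a finite pair $\left(m,n\right)$ with $\left|m-n\right|\leq1$, and since right-class $3$ gives $n\leq3$ we obtain $m\leq4$, so $\left(A,N\right)$ is of class $\left(4,3\right)$. To prove right-class $3$, I would fix an arbitrary length-three word and run $\overline{N}_{212}$ on it, recording the outputs as $p|q|r\mapsto p|s_{1}|s_{2}\mapsto s_{1}'|t_{1}|s_{2}\mapsto s_{1}'|s_{2}'|t_{2}$ with $\overline{N}\left(q|r\right)=s_{1}|s_{2}$, $\overline{N}\left(p|s_{1}\right)=s_{1}'|t_{1}$ and $\overline{N}\left(t_{1}|s_{2}\right)=s_{2}'|t_{2}$. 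The factor $s_{2}'|t_{2}$ is normal by construction, and the three normal factors $s_{1}|s_{2}$, $s_{1}'|t_{1}$, $s_{2}'|t_{2}$ together with the two factorisation equalities are precisely the hypotheses of the domino rule (with $t_{0}=p$); its conclusion yields that the first factor $s_{1}'|s_{2}'$ is normal. Both length-two factors of $\overline{N}_{212}\left(p|q|r\right)$ being normal, the word is $N$-normal, hence equal to $N\left(p|q|r\right)$, establishing right-class $3$.

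The genuine content is the observation in the first paragraph that the domino configuration is nothing but one pass of $\overline{N}_{212}$; once this dictionary is in place, both directions become bookkeeping with the locality property. The only point requiring care is the passage from right-class $3$ to the full class $\left(4,3\right)$ in the converse: rather than verifying left-class $4$ by hand (which would mean analysing the longer alternating sequence $\overline{N}_{1212}$), I lean on Lemma~\ref{lem:minimal-class} to deduce the left-class bound from the right-class bound, which is where the constraint $\left|m-n\right|\leq1$ on minimal classes does the work.
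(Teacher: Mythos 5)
Your proof is correct. Note that the paper does not prove this statement at all---it is imported as a citation from \cite{DG} (Lemma 4.2)---and your argument, which reads the domino configuration as exactly one pass of $\overline{N}_{212}$ over a length-three word (using idempotency of $\overline{N}$ and the locality property (\ref{enu:quadratic-1}) of Definition \ref{def:quadratic} in both directions) and then upgrades right-class $3$ to class $\left(4,3\right)$ via Lemma \ref{lem:minimal-class}, is essentially the standard proof given in that source.
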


The domino rule allows one to devise a simple universal recipe for
computing the images under a normalisation map. We refer the reader
to \cite[Proposition 4.4]{DG} for details, while here we only recall
the key step.
\begin{lem}[{\cite[Lemma 4.5]{DG}}]
\label{lem:normalisation-letter-next-to-normal-word}If $\left(A,N\right)$
is a quadratic normalisation of class $\left(4,3\right)$, then, for
every $s$ in A and every $N$-normal $A$-word $r_{1}|\cdots|r_{n}$,
we have
\[
N\left(s|r_{1}|\cdots|r_{n}\right)=\overline{N}_{1|2|\cdots|n-1|n}\left(s|r_{1}|\cdots|r_{n}\right).
\]
\end{lem}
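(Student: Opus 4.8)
The plan is to compute $N(t|s_1|\cdots|s_q)$ by a single left-to-right sweep, applying $\overline{N}$ successively at the positions $1,2,\ldots,q$, and then to show that the word this sweep produces is already $N$-normal. Concretely, I would set $t_0\coloneqq t$ and define the \emph{carry} letters inductively by $\overline{N}(t_{i-1}|s_i)=s_i'|t_i$ for $i=1,\ldots,q$. Before the $i$th step the word reads $s_1'|\cdots|s_{i-1}'|t_{i-1}|s_i|\cdots|s_q$, and applying $\overline{N}_i$ to its factor $t_{i-1}|s_i$ pushes the carry one place to the right. Hence after the full sweep
\[
\overline{N}_{1|2|\cdots|q}(t|s_1|\cdots|s_q)=s_1'|s_2'|\cdots|s_q'|t_q,
\]
and it suffices to prove that this word is $N$-normal and that it represents the same $N$-value as the input.

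The heart of the argument is the normality of $s_1'|\cdots|s_q'|t_q$. By the locality characterisation of quadratic normalisations (the first clause of Definition \ref{def:quadratic}), this amounts to checking that each length-two factor is a fixed point of $\overline{N}$. The final factor $s_q'|t_q=\overline{N}(t_{q-1}|s_q)$ is a fixed point by idempotency of $\overline{N}$, which itself follows from the third axiom of Definition \ref{def:normalisation} (setting the outer words to be empty gives $N(N(x))=N(x)$). For the factors $s_{i-1}'|s_i'$ with $2\leq i\leq q$ I would invoke the domino rule, valid for $\overline{N}$ precisely because $(A,N)$ is of class $(4,3)$ (Proposition \ref{prop:quadratic-iff-domino}). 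Matching the labels $t_0,t_1,t_2,s_1,s_2$ of the domino rule with $t_{i-2},t_{i-1},t_i,s_{i-1},s_i$, the two defining equalities $\overline{N}(t_{i-2}|s_{i-1})=s_{i-1}'|t_{i-1}$ and $\overline{N}(t_{i-1}|s_i)=s_i'|t_i$ hold by construction; the solid-arc hypotheses that $s_{i-1}'|t_{i-1}$ and $s_i'|t_i$ are fixed points hold by idempotency, and that $s_{i-1}|s_i$ is a fixed point holds because $s_1|\cdots|s_q$ was assumed $N$-normal (again by locality). The domino rule then delivers that $s_{i-1}'|s_i'$ is a fixed point, as required.

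To identify the normal word thus obtained with $N(t|s_1|\cdots|s_q)$, I would observe that applying $\overline{N}$ at a single position never changes the value of $N$: writing $x=u|v|w$ with $v$ the length-two factor at position $i$, the third axiom of Definition \ref{def:normalisation} gives $N(\overline{N}_i(x))=N(u|N(v)|w)=N(u|v|w)=N(x)$. Iterating this along the sweep yields $N(s_1'|\cdots|s_q'|t_q)=N(t|s_1|\cdots|s_q)$, and since the left-hand word is $N$-normal it coincides with its own $N$-image, whence $s_1'|\cdots|s_q'|t_q=N(t|s_1|\cdots|s_q)$, which is the claimed equality.

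I expect the main obstacle to be the bookkeeping in the inductive step: one must describe the intermediate words of the sweep precisely enough to see that, at each position, the three solid arcs of the domino diagram are exactly the pairs produced at the two neighbouring stages together with a factor of the original normal word. Once the carry letters $t_i$ are introduced and the sweep is written as above, aligning them with the domino labels is routine, but the indices and the base case $q=1$ (where the sweep is the single step $\overline{N}_1$, normal by idempotency) must be handled with care.
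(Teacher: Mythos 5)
Your proposal is correct. Note that the paper itself gives no proof of this statement: it is recalled verbatim as \cite[Lemma 4.5]{DG}, so there is no internal argument to compare against. Your reconstruction is the standard one and checks out in every detail: the carry-sweep computation $\overline{N}_{1|2|\cdots|q}(t|s_1|\cdots|s_q)=s_1'|\cdots|s_q'|t_q$ is right; idempotency of $\overline{N}$ does follow from property (\ref{enu:normalisation-3}) of Definition \ref{def:normalisation} with empty outer words; the domino rule (available precisely by Proposition \ref{prop:quadratic-iff-domino}) applied with the labels shifted as you indicate gives normality of each factor $s_{i-1}'|s_i'$, with the hypothesis on $s_{i-1}|s_i$ supplied by locality of normality (property (\ref{enu:quadratic-1}) of Definition \ref{def:quadratic}) applied to the assumed normal word; and the identification with $N(t|s_1|\cdots|s_q)$ via invariance of $N$ under each $\overline{N}_i$ closes the argument. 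This is, moreover, exactly the kind of domino-diagram sweep the paper itself later draws in the proof of Proposition \ref{prop:induction}, so your proof is in the same spirit as how the result is used and how it is established in the cited source.
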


\begin{rem}
\label{rem:leftmost-letter}Note, in particular, that Lemma~\ref{lem:normalisation-letter-next-to-normal-word}
implies that the leftmost letter of $N\left(s|r_{1}|\cdots|r_{n}\right)$
does not depend on $r_{2},\ldots,r_{n}$, but only on $s$ and $r_{1}$.
We will use this observation in Subsection~\ref{subsec:factorability-(4,3)}.
\end{rem}

By Proposition~\ref{prop:class-characterisation}(\ref{enu:class-characterisation-2}),
if $\left(A,N\right)$ is a quadratic normalisation of class $\left(4,3\right)$,
then the maps $\overline{N}_{1212}$, $\overline{N}_{212}$ and $\overline{N}_{2121}$
coincide on $A^{3}$. One of the major results of \cite{DG} is the
converse: every idempotent map satisfying such condition arises from
a quadratic normalisation of class $\left(4,3\right)$, in the following
sense.
\begin{prop}[{\cite[Proposition 4.7]{DG}}]
Let $A$ be a set, and let $F$ be a map from $A^{2}$ to itself.
If $F$ is idempotent, and if the maps $F_{1212}$, $F_{212}$ and
$F_{2121}$ coincide on $A^{3}$, then there is a quadratic normalisation
$\left(A,N\right)$ of class $\left(4,3\right)$ such that the map
$\overline{N}$ is identically equal to the map $F$.
\end{prop}

Quadratic normalisations of class $\left(4,3\right)$ are thus fully
axiomatised.

\subsection{\protect\label{subsec:normalisation-rewriting}Rewriting system associated
with quadratic normalisation}

There is a simple correspondence between quadratic normalisations
and quadratic rewriting systems.
\begin{prop}[{\cite[Proposition 3.7]{DG}}]
\label{prop:normalisation-rewriting}~
\begin{enumerate}
\item \label{enu:normalisation-rewriting-1}If $\left(S_{+},N\right)$ is
a quadratic normalisation for a monoid $M$, then a quadratic, reduced,
normalising and confluent rewriting system $\left(S_{+},R\right)$
presenting $M$, is obtained by defining $R$ as the set of rewriting
rules of the form 
\[
s|t\rightarrow\overline{N}\left(s|t\right)
\]
for all $s$ and $t$ in $S_{+}$ such that $s|t$ is not $N$-normal.
\item Conversely, if $\left(S_{+},R\right)$ is a quadratic, reduced, normalising
and confluent rewriting system presenting a monoid $M$, then $\left(S_{+},N\right)$
is a quadratic normalisation for $M$, with 
\[
N\left(w\right)=\widehat{w}
\]
for all $w$ in $S_{+}^{\ast}$.
\item These two correspondences are inverse to each other.
\end{enumerate}
\end{prop}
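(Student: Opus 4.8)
The plan is to treat the three parts in turn; the unifying observation is that both directions are controlled entirely by the length-two restriction $\overline{N}$, so once the rewriting-theoretic bookkeeping is in place the correspondence is nearly forced. Throughout I will use that, for a length-two word, the only possible redex is the whole word, so reducibility of a length-two word is the same as being the source of a rule.

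For part~(\ref{enu:normalisation-rewriting-1}), I would take $R$ as prescribed and dispatch the easy adjectives first. Quadraticity is immediate because $N$ is length-preserving (property~(\ref{enu:normalisation-1}) of Definition~\ref{def:normalisation}), so every target $\overline{N}(s|t)$ has length two. That $(S_{+},R)$ presents $M$ is Proposition~\ref{prop:quadratic-presentation}(\ref{enu:quadratic-presentation-1}) once one notes that the discarded rules (those with $N$-normal source) encode trivial relations. Reducedness splits into two checks: each target $\overline{N}(s|t)$ is a fixed point of $\overline{N}$ by idempotency, hence $N$-normal, hence not a source, so targets are irreducible; and each source is irreducible with respect to $R$ minus its own rule because a quadratic reduced system has a unique rule per length-two source. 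Normalising and confluent I would then obtain jointly. Property~(\ref{enu:quadratic-2}) of Definition~\ref{def:quadratic} writes $N(w)=\overline{N}_u(w)$, and reading off the positions where $\overline{N}$ genuinely changes its factor exhibits a rewriting sequence $w\rightarrow^{\ast}N(w)$ ending at the $N$-normal, hence irreducible, word $N(w)$; this gives normalising. The key is the invariance $N(w)=N(w')$ whenever $w\rightarrow w'$, which is just property~(\ref{enu:normalisation-3}) of Definition~\ref{def:normalisation} applied to the rewritten factor. Invariance forces every irreducible word reachable from $w$ to equal $N(w)$, so normal forms are unique, and uniqueness together with existence yields confluence.

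For the converse, part~(\ref{enu:normalisation-rewriting-1})'s companion, confluence and normalisation first make $\widehat{u}$ well-defined, so $N(u)=\widehat{u}$ is meaningful. The normalisation axioms of Definition~\ref{def:normalisation} then read off: length preservation from quadraticity; identity on $S_{+}$ because a length-one word has no length-two redex; and property~(\ref{enu:normalisation-3}) from locality of rewriting combined with confluence. For quadraticity, property~(\ref{enu:quadratic-1}) follows from the fact that a word is irreducible exactly when each of its length-two factors is, and property~(\ref{enu:quadratic-2}) by converting a rewriting sequence $w\rightarrow^{\ast}\widehat{w}$ into a sequence of $\overline{N}$-applications. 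This last step is where reducedness is indispensable: reducedness forces each rule target to be irreducible, so a single rule $x\rightarrow y$ already satisfies $y=\widehat{x}=\overline{N}(x)$, whence applying that rule coincides with applying $\overline{N}$ at that position. Finally, $(S_{+},N)$ presents $M$ because $w\rightarrow^{\ast}N(w)$ gives $w=N(w)$ in $M$, while each defining relation $x=y$ of $R$ is recovered as $x=N(x)=y$.

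For part~(\ref{enu:normalisation-rewriting-1})'s ``inverse'' clause, I would check both round-trips. Starting from a quadratic normalisation and returning, the irreducible form computed by the associated system is exactly $N(w)$ by the uniqueness established above, so the map recovered is $N$ itself. Starting from a rewriting system and returning, I would invoke the bijection between non-$N$-normal length-two words and rules of $R$: a length-two word is reducible iff it is a source iff it is not $N$-normal, and reducedness gives exactly one rule per source with target $\widehat{x}=\overline{N}(x)$, so the rebuilt rule set is $R$ verbatim, trivial rules $x\rightarrow x$ being excluded precisely because reducedness would then make the target reducible. I expect the main obstacle to be the confluence argument of the forward direction: since termination is unavailable (the system need not terminate), confluence cannot come from a Newman-type diamond argument and must instead be extracted from the invariance of $N$ under rewriting, so getting the logical order right—invariance plus existence of one irreducible form yields uniqueness, and uniqueness plus normalising yields confluence—is the delicate point.
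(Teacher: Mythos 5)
The paper does not prove this proposition: it is recalled verbatim from Dehornoy--Guiraud (cited there as Proposition 3.7 of \cite{DG}), so there is no in-paper argument to compare against. Judged on its own, your reconstruction is correct, and it follows the route the original source takes: rules are the graph of $\overline{N}$ on non-normal length-two words; the invariance $N(w)=N(w')$ under a rewriting step (property (\ref{enu:normalisation-3}) of Definition \ref{def:normalisation}) substitutes for any termination-based argument; irreducible $=$ $N$-normal via property (\ref{enu:quadratic-1}) of Definition \ref{def:quadratic}; and reducedness of $R$ is exactly what makes a single rewriting step agree with an application of $\overline{N}$ in the converse direction. Your closing remark correctly identifies the delicate point: since the system need not terminate (cf.\ Proposition \ref{prop:(4,3)-terminates}, which only applies in class $(4,3)$), confluence must come from invariance plus uniqueness of reachable irreducibles, not from Newman's lemma.

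One cosmetic repair: your justification of the second half of reducedness (``each source is irreducible with respect to $R$ minus its own rule because a quadratic reduced system has a unique rule per length-two source'') is circular as phrased, since reducedness is what you are proving. The fact you need is immediate from the construction instead: $R$ contains, for each non-$N$-normal length-two word $x$, exactly the one rule $x\rightarrow\overline{N}(x)$, so removing that rule leaves no rule whose source is a factor of $x$. With that rewording the proof is complete.
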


Proposition~\ref{prop:normalisation-rewriting}(\ref{enu:normalisation-rewriting-1})
can be adapted to a case where there is an $N$-neutral element. In
fact, an $N$-neutral element does not affect termination of the associated
rewriting system.
\begin{prop}[{\cite[Proposition 3.9]{DG}}]
\label{prop:normalisation-rewriting-e}~
\end{prop}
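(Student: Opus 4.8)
The plan is to treat the projection $\pi_{e}:S^{\ast}\to S_{+}^{\ast}$ as a dictionary between the present setting and the graded case already settled in Proposition~\ref{prop:normalisation-rewriting}. First I would record the presentation for free: by Proposition~\ref{prop:quadratic-presentation}(\ref{enu:quadratic-presentation-3}), $M$ admits the presentation $\left\langle S_{+}\mid\left\{ s|t=\pi_{e}\left(\overline{N}\left(s|t\right)\right)\mid s,t\in S_{+}\right\} \right\rangle$, so directing each nontrivial relation as $s|t\rightarrow\pi_{e}\left(\overline{N}\left(s|t\right)\right)$ yields a rewriting system $\left(S_{+},R\right)$ that presents $M$ by construction. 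The sources have length two while the targets have length one or two, since $\pi_{e}$ may delete an occurrence of $e$; thus $R$ is length-non-increasing but, in contrast with the graded case, need not be quadratic in the strict sense. This deletion of surplus $e$'s is exactly the bookkeeping already anticipated in Remark~\ref{rem:different-presentations}.

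Next I would establish that $\left(S_{+},R\right)$ is normalising and confluent. For any $S_{+}$-word $w$, quadraticity of $N$ (Definition~\ref{def:quadratic}(\ref{enu:quadratic-2})) together with the neutrality relations~(\ref{eq:neutral}) shows that the $R$-irreducible word reached from $w$ is $\pi_{e}\left(N\left(w\right)\right)$, obtained by applying $\overline{N}$ at a finite sequence of positions and erasing $e$'s; this gives the normalising property. Confluence then follows because any two $R$-irreducible words reachable from $w$ evaluate to the same element of $M$ and are both $N$-normal after reinserting $e$'s, hence coincide by the uniqueness built into the geodesic normal form of Proposition~\ref{prop:normalisation-normal-form}(1).

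The heart of the statement is that the neutral element does not affect termination: I would show that $\left(S_{+},R\right)$ terminates if and only if the companion system on $S$ with rules $s|t\rightarrow\overline{N}\left(s|t\right)$ for all non-$N$-normal $s|t\in S^{2}$ terminates. By the neutrality characterisation of Proposition~\ref{prop:quadratic-presentation}(\ref{enu:quadratic-presentation-2}), the only rules of the companion system that involve $e$ are of the form $e|s\rightarrow s|e$; these merely push occurrences of $e$ to the right and, used on their own, terminate, since each such step strictly decreases the number of pairs in which an $e$ precedes a non-$e$ letter. Applying $\pi_{e}$ to a companion rewriting sequence turns every $e$-rule into an identity and every rule with $s,t\in S_{+}$ into a genuine $R$-step; as any infinite companion sequence must use infinitely many non-$e$ rules, its image under $\pi_{e}$ is an infinite $R$-sequence. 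Conversely, each $R$-step $s|t\rightarrow\pi_{e}\left(\overline{N}\left(s|t\right)\right)$ lifts to the companion step $s|t\rightarrow\overline{N}\left(s|t\right)$, preceded and followed by finitely many $e$-moves that clear any $e$ lying between the two letters to be rewritten and reposition the $e$ possibly created by $\overline{N}$, so an infinite $R$-sequence lifts to an infinite companion sequence.

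I expect the main obstacle to be precisely this simulation bookkeeping: one must verify that only finitely many $e$-moves are interspersed between consecutive genuine steps, so that finiteness is honestly preserved in the lifting direction, and that the neutrality relations~(\ref{eq:neutral}) are exactly strong enough to make $\pi_{e}$ commute with one-step rewriting up to these $e$-moves. Once this commutation is pinned down, both the confluence and normalising claims and the termination equivalence follow formally from the corresponding facts over $S_{+}$ supplied by Proposition~\ref{prop:normalisation-rewriting}.
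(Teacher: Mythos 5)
The paper does not prove this statement at all: it is recalled from \cite{DG} (Proposition 3.9) as background, so there is no internal proof to compare against, and I can only judge your proposal on its own merits and against the source's argument. Your route is sound and is essentially the natural (and DG's) one: read the reduced rules through $\pi_{e}$, observe via Proposition \ref{prop:quadratic-presentation}(\ref{enu:quadratic-presentation-2}) that the only rules of the companion system on $S$ involving $e$ are $e|s\rightarrow s|e$, note that these alone terminate, and simulate rewriting in both directions with finitely many $e$-moves around each genuine step; this yields normalisation (every $w$ reaches $\pi_{e}(N(w))$), confluence (two irreducible words reachable from $w$ are $N$-normal over $S_{+}$ and represent the same element of $M$, hence coincide by Proposition \ref{prop:normalisation-normal-form}), and the termination transfer, which is exactly the lifting direction you describe. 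Two small omissions should be repaired. First, the statement also asserts that $\left(S_{+},R\right)$ is \emph{reduced}, which you never verify; it is routine: a length-two target equals $\overline{N}\left(s|t\right)\in S_{+}^{2}$, which is $N$-normal by idempotency of $\overline{N}$ and hence not the source of any rule, shorter targets contain no length-two factor at all, and a source $s|t$ has itself as its unique length-two factor, so it is irreducible with respect to the remaining rules. Second, targets need not have ``length one or two'': $\pi_{e}\left(\overline{N}\left(s|t\right)\right)$ can be the empty word, namely when $\overline{N}\left(s|t\right)=e|e$, i.e. when $st=1$ in $M$, as happens for instance in Example \ref{exa:sign}; nothing in your simulation breaks in this case, but the claim as you state it is false and the case should be acknowledged.
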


\begin{enumerate}
\item \label{enu:normalisation-rewriting-e-1}If $\left(S,N\right)$ is
a quadratic normalisation mod $e$ for a monoid $M$, then a reduced,
normalising and confluent rewriting system $\left(S_{+},R\right)$
presenting $M$, is obtained by defining $R$ as the set of rewriting
rules  of the form 
\[
s|t\rightarrow\pi_{e}\left(\overline{N}\left(s|t\right)\right)
\]
for all $s$ and $t$ in $S_{+}$ such that $s|t$ is not $N$-normal.
\item If the rewriting system in Proposition~\ref{prop:normalisation-rewriting}(\ref{enu:normalisation-rewriting-1})
terminates, then so does the one in (\ref{enu:normalisation-rewriting-e-1}). 
\end{enumerate}
The rewriting system associated with a quadratic normalisation in
Proposition~\ref{prop:normalisation-rewriting} need not be terminating
(see \cite[Proposition 5.7]{DG}). However, it is terminating for
quadratic normalisations of class $\left(4,3\right)$.
\begin{prop}[{\cite[Proposition 5.4]{DG}}]
\label{prop:(4,3)-terminates}\label{prop:(4,3)-converges}If $\left(A,N\right)$
is a quadratic normalisation of class $\left(4,3\right)$, then the
associated rewriting system is convergent. More precisely, every
rewriting sequence starting from an element of length $n$ has length
at most $2^{n}-n-1$.
\end{prop}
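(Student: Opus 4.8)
The plan is to separate the two assertions. Confluence is not special to class $\left(4,3\right)$: as recorded in Proposition \ref{prop:normalisation-rewriting}, the rewriting system associated to any quadratic normalisation is confluent and normalising. (Indeed, a word is irreducible if and only if it is $N$-normal, and each rewriting step leaves the $N$-image invariant by Definition \ref{def:normalisation}(\ref{enu:normalisation-3}), so any two coinitial sequences can be completed to the common word $N\left(w\right)$.) Hence convergence will follow as soon as we have termination, and the quantitative bound ``length at most $2^{p}-p-1$ on rewriting sequences from $A^{p}$'' implies termination a fortiori. So the entire content is the length bound, which I would prove by induction on $p$.

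Write $L\left(p\right)$ for the supremum of the lengths of rewriting sequences issued from words of $A^{p}$. For the base cases, a one-letter word is $N$-normal by Definition \ref{def:normalisation}(\ref{enu:normalisation-2}), so $L\left(1\right)=0=2^{1}-1-1$; and a non-normal two-letter word admits a single step $s|t\to\overline{N}\left(s|t\right)$ whose target is a fixed point of $\overline{N}$ by idempotency, so $L\left(2\right)\le 1=2^{2}-2-1$. The inductive step then reduces to the recursion $L\left(p\right)\le 2L\left(p-1\right)+\left(p-1\right)$, which is precisely what converts $L\left(p-1\right)\le 2^{p-1}-\left(p-1\right)-1$ into $L\left(p\right)\le 2^{p}-p-1$ (equivalently, the extra leftmost letter may trigger at most $L\left(p-1\right)+\left(p-1\right)=2^{p-1}-1$ further steps).

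To obtain this recursion I would fix a rewriting sequence $\Sigma$ issued from $w=\left(s_{1},\dots,s_{p}\right)$ and sort its steps according to whether they occur at position $1$ (rewriting the leftmost factor) or at a position $\ge 2$ (rewriting a factor of the suffix $\left(s_{2},\dots,s_{p}\right)\in A^{p-1}$). The steps at positions $\ge 2$, read in order, form a rewriting sequence on the length-$\left(p-1\right)$ suffix that is repeatedly interrupted by the position-$1$ steps, each of which resets the first letter of that suffix. The class-$\left(4,3\right)$ hypothesis enters in two ways: through Lemma \ref{lem:normalisation-letter-next-to-normal-word}, which guarantees that re-normalising a single modified letter against an already normal word of length $q$ costs at most $q$ steps, namely one left-to-right sweep $\overline{N}_{1|2|\cdots|q}$; and through Remark \ref{rem:leftmost-letter} together with the defining relation $\overline{N}_{1212}=\overline{N}_{212}$ of \eqref{eq:class_(4,3)} (equivalently, the domino rule of Proposition \ref{prop:quadratic-iff-domino}), which controls the evolution of the leftmost letter and caps the number of position-$1$ steps. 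Combining these, the position-$\ge 2$ steps contribute at most $L\left(p-1\right)$ for an initial normalisation of the suffix and at most a further $L\left(p-1\right)$ for the re-normalisations provoked by the leftmost factor, while the position-$1$ steps account for the remaining sweep of length at most $p-1$, giving $L\left(p\right)\le 2L\left(p-1\right)+\left(p-1\right)$.

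The main obstacle is exactly this bookkeeping of the interaction between the two families of steps: a priori each position-$1$ step could reactivate an unbounded amount of rewriting inside the suffix, and it is only the relation $\overline{N}_{1212}=\overline{N}_{212}$ — the characteristic feature of class $\left(4,3\right)$ by Proposition \ref{prop:class-characterisation}, and the reason the weaker class $\left(5,4\right)$ would not suffice — that prevents this feedback from compounding beyond the factor $2$. I would therefore concentrate the real work on proving, via the domino rule, that once the leftmost factor has been normalised the only further work it can create is governed by a single sweep, so that the count of position-$1$ steps stays controlled. As a consistency check one already sees the bound saturate at $p=3$, where the left-leaning route $\overline{N}_{1212}$ realises four genuine steps $=2^{3}-3-1$, matching the closed form.
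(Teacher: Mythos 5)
You should first be aware that the paper itself contains no proof of this statement: it is imported verbatim from \cite{DG} (Proposition 5.4), so your proposal has to stand as a self-contained argument. Its outer shell is fine: reducing convergence to termination via the general fact that the associated rewriting system of any quadratic normalisation is normalising and confluent (Proposition \ref{prop:normalisation-rewriting}) is correct, the base cases are correct, and the arithmetic turning the recursion $L\left(p\right)\leq2L\left(p-1\right)+\left(p-1\right)$ into the bound $2^{p}-p-1$ is correct.

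The gap is that the inductive step — the only place where the class-$\left(4,3\right)$ hypothesis does any work — rests on two quantitative claims you never prove, and the tools you invoke cannot prove them. Lemma \ref{lem:normalisation-letter-next-to-normal-word} and Remark \ref{rem:leftmost-letter} are statements about the normal form computed by one particular strategy (a single left-to-right sweep applied to a letter followed by an \emph{already normal} word); termination requires bounding \emph{arbitrary} rewriting sequences, and the passage from ``some normalising sequence is short'' to ``every sequence is short'' is precisely the hard point: every quadratic normalisation admits short normalising sequences, yet termination can fail in general, as recalled in Subsection \ref{subsec:normalisation-rewriting}. Concretely, in an arbitrary sequence a position-$1$ step may fire while the suffix is far from normal, so the work it provokes is not ``one sweep''; and your decomposition by itself only supports the estimate $k+\left(k+1\right)L\left(p-1\right)$, where $k$ is the number of position-$1$ steps, because each of the $k+1$ blocks of position-$\geq2$ steps is a rewriting sequence on some length-$\left(p-1\right)$ word. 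That estimate is too weak: already for $p=3$ with $k=2$ (which occurs, e.g.\ via the position sequence $1,2,1,2$) it gives $5$, whereas the true bound is $4=2^{3}-3-1$. Improving $\left(k+1\right)L\left(p-1\right)$ to $2L\left(p-1\right)$, and establishing any cap such as $k\leq p-1$, is exactly what requires a history-sensitive use of the domino rule (its hypotheses refer to the letters \emph{before} the steps, not merely to the current word), and this is the part you explicitly defer (``I would therefore concentrate the real work on\dots''). There is also an internal inconsistency in the accounting: a sweep contains only one step at position $1$, so ``the remaining sweep'' cannot account for up to $p-1$ position-$1$ steps. As it stands, the proposal correctly locates the obstacle but does not overcome it; it is a plan for a proof, not a proof.
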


\subsection{\protect\label{subsec:left-weighted}Left-weighted normalisation}

Before closing this section, let us recall (from \cite[Subsection 6.2]{DG})
a notion to be used in the next section. A normalisation $\left(A,N\right)$
for a monoid $M$ is called \textbf{left-weighted} if, for all $s,t,s',t'$
in $A$, the equality $s'|t'=\overline{N}\left(s|t\right)$ implies
the left divisibility $s\preceq s'$ in $M$.
\begin{prop}[{\cite[Proposition 6.10]{DG}}]
\label{prop:garside-iff-left-weighted}Let $M$ be a left-cancellative
monoid $M$ containing no nontrivial invertible element. If $\left(S,N\right)$
is a quadratic normalisation mod $1$ for $M$, then the following
are equivalent.
\begin{itemize}
\item Every $N$-normal word $r_{1}|\cdots|r_{n}$ has the following property
for all $i<n$,
\begin{equation}
\textrm{for all }t\in S\textrm{ and }h\in M,\quad t\preceq hr{}_{i}r_{i+1}\textrm{ implies }t\preceq hr_{i}.\label{eq:greedy}
\end{equation}
\item The normalisation $\left(S,N\right)$ is of class $\left(4,3\right)$
and is left-weighted.
\end{itemize}
\end{prop}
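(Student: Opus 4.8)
The plan is to prove the two implications separately, organising both around the \emph{head} map sending $g\in M$ to the leftmost letter $H(g)$ of the $N$-normal word representing $g$ (so that $H(g)\preceq g$ always), and around the single auxiliary statement
\[
(\star)\qquad t\preceq g\iff t\preceq H(g)\quad\text{for all }t\in S.
\]
The point is that $(\star)$ is available from either side's hypotheses, but must be proved by different means. A device I expect to reuse is that in a left-cancellative monoid with no nontrivial invertible element, mutual divisibility forces equality: if $x\preceq y$ and $y\preceq x$, then $x=y$.

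\emph{Forward direction} (condition \eqref{eq:greedy} $\Rightarrow$ class $\left(4,3\right)$ and left-weighted). Left-weightedness is immediate: if $\overline{N}\left(s\vert t\right)=s'\vert t'$, then $s'\vert t'$ is the $N$-normal word of $g\coloneqq\ev\left(st\right)$, and applying \eqref{eq:greedy} at $i=1$ with $f=1$ and test letter $s$ (using $s\preceq st=g$) gives $s\preceq s'$. For class $\left(4,3\right)$ I would invoke Proposition \ref{prop:quadratic-iff-domino} and verify the domino rule for $\overline{N}$. Given its hypotheses, namely $\overline{N}\left(t_{0}\vert s_{1}\right)=s_{1}'\vert t_{1}$ and $\overline{N}\left(t_{1}\vert s_{2}\right)=s_{2}'\vert t_{2}$ with $s_{1}\vert s_{2}$, $s_{1}'\vert t_{1}$, $s_{2}'\vert t_{2}$ normal, the relations $s_{1}'t_{1}=t_{0}s_{1}$ and $s_{2}'t_{2}=t_{1}s_{2}$ give $s_{1}'s_{2}'t_{2}=t_{0}s_{1}s_{2}$; hence for $u\in S$ with $u\preceq s_{1}'s_{2}'$ one has $u\preceq t_{0}s_{1}s_{2}$, so \eqref{eq:greedy} for $s_{1}\vert s_{2}$ (with $f=t_{0}$) yields $u\preceq t_{0}s_{1}=s_{1}'t_{1}$, and \eqref{eq:greedy} for $s_{1}'\vert t_{1}$ then yields $u\preceq s_{1}'$. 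Thus the pair $\left(s_{1}',s_{2}'\right)$ is greedy, and it remains to see that a greedy pair is normal. Here $(\star)$ follows by iterating \eqref{eq:greedy} down a normal word; combined with the cancellation device this forces, for a greedy pair $u\vert v$, the head of $\ev\left(uv\right)$ to equal $u$, whence $v$ is the remaining one-letter normal word and $u\vert v$ is normal.

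\emph{Backward direction} (class $\left(4,3\right)$ and left-weighted $\Rightarrow$ \eqref{eq:greedy}). Now I would first prove $(\star)$ from these hypotheses: writing $g=\ev\left(t\vert h_{1}\vert\cdots\vert h_{k}\right)$ with $h_{1}\vert\cdots\vert h_{k}$ normal, Lemma \ref{lem:normalisation-letter-next-to-normal-word} and Remark \ref{rem:leftmost-letter} identify $H(g)$ with the first letter of $\overline{N}\left(t\vert h_{1}\right)$, and left-weightedness gives $t\preceq H(g)$. Then I would prove \eqref{eq:greedy} for a normal pair $s_{1}\vert s_{2}$ and all $f$ by induction on the length of the normal form of $f$. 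The base case $f=1$ is $(\star)$ applied to $\ev\left(s_{1}s_{2}\right)$. For the step, write $f=ag$ with $a=H(f)$; the inductive hypothesis applied to $g$ together with $(\star)$ shows $H\left(gs_{1}s_{2}\right)\preceq H\left(gs_{1}\right)$ and $H\left(gs_{1}\right)\preceq H\left(gs_{1}s_{2}\right)$, so these heads coincide by the cancellation device. Feeding this common first letter into the head formula supplied by Lemma \ref{lem:normalisation-letter-next-to-normal-word} gives $H\left(fs_{1}s_{2}\right)=H\left(fs_{1}\right)$, and a final application of $(\star)$ upgrades this to $t\preceq fs_{1}s_{2}\Rightarrow t\preceq fs_{1}$.

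The main obstacle I anticipate is the forward passage to class $\left(4,3\right)$: unlike the backward direction, one cannot use Lemma \ref{lem:normalisation-letter-next-to-normal-word} (which presupposes class $\left(4,3\right)$), so the domino rule must be extracted directly, and the genuinely non-formal point is the lemma that a greedy pair is already $N$-normal, where uniqueness of the normal form and the absence of nontrivial invertibles enter. Throughout, the presence of the $N$-neutral letter $1$ requires only routine bookkeeping (removing and restoring occurrences of $1$ via $\pi_{e}$, and treating one-letter words as their own normal forms), which I would handle silently.
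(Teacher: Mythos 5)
Your argument is correct, but note that the paper itself contains no proof to compare against: Proposition \ref{prop:garside-iff-left-weighted} is recalled from \cite{DG} (Proposition 6.10 there), so the only meaningful comparison is with Dehornoy and Guiraud's original argument. Their proof routes the statement through Garside theory: condition \eqref{eq:greedy} is recognised as saying that the $N$-normal words are exactly the greedy words associated to a Garside family, and the equivalence with class $\left(4,3\right)$ plus left-weightedness is deduced from general results on Garside normalisations, themselves resting on the classical domino rules for greedy decompositions. Your proof reconstructs precisely this content in a self-contained way, and the steps check out: the forward direction is the standard greedy-domino computation, $u\preceq s_{1}'s_{2}'\Rightarrow u\preceq s_{1}'s_{2}'t_{2}=t_{0}s_{1}s_{2}\Rightarrow u\preceq t_{0}s_{1}=s_{1}'t_{1}\Rightarrow u\preceq s_{1}'$, followed by the lemma that a greedy pair of letters is $N$-normal, which is where left-cancellativity and the absence of nontrivial invertible elements enter (your mutual-divisibility device is sound, since $ab=1$ forces $ba=1$ in a left-cancellative monoid, hence $a=b=1$); the backward direction correctly obtains the head property $(\star)$ from Lemma \ref{lem:normalisation-letter-next-to-normal-word}, Remark \ref{rem:leftmost-letter} and left-weightedness, and the induction on the length of $\nf\left(f\right)$ closes it. Two points you should make explicit rather than leave to silent bookkeeping: first, the well-definedness of your head map $H$, and the uniqueness step in ``greedy pair is normal'', both rest on the fact that two $N$-normal words representing the same element of $M$ have the same image under $\pi_{e}$, which follows from Proposition \ref{prop:normalisation-normal-form} because $\pi_{e}\left(N\left(w\right)\right)$ depends only on $\ev\left(w\right)$; second, in the domino-rule verification you establish greediness of $\left(s_{1}',s_{2}'\right)$ only for $f=1$, which is indeed all that your ``greedy pair is normal'' lemma consumes, but this should be said, since the full condition \eqref{eq:greedy} for that pair would require rerunning the computation with an arbitrary left factor $f$. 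What your route buys is independence from the Garside-family machinery, using only results already quoted in the paper; what the original route buys is the stronger structural conclusion, invisible in your argument, that the normalisations in question are exactly those arising from Garside families.
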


The normal form $r_{1}|\cdots|r_{n}$ having the property~(\ref{eq:greedy})
is called \textbf{$S$-greedy} (at the $i$th position). It can be
expressed diagrammatically as follows. Commutativity of the diagram
\[\begin{tikzcd}[row sep=large, column sep=large] \bullet \arrow[r, shorten <=-2pt,  shorten >=-2pt, "t"] \arrow[r, phantom, ""{name=s1'}', near end] \arrow[d, "h"'] & \bullet \arrow[rd, bend left, shorten <=-2pt,  shorten >=-2pt] \arrow[d, shorten <=-2pt, shorten >=-2pt, dashed] \arrow[d, phantom, ""{name=f1}', near start] & \\ \bullet \arrow[r, shorten <=-2pt,  shorten >=-2pt, "r_{i}"] \arrow[r, phantom, ""{name=s1}', near end] & \bullet \arrow[r, shorten <=-2pt,  shorten >=-2pt, "r_{i+1}"] \arrow[r, phantom, ""{name=s2}', near start] & \bullet \arrow[from=s1, to=s2, shorten <=-2pt,  shorten >=-2pt, bend right=50, dash] \end{tikzcd}\]without
the dashed arrow implies the existence of a dashed arrow making the
square (and thus also the triangle) commute. The arc joining $r_{i}$
and $r_{i+1}$ signifies that the word $r_{i}|r_{i+1}$ is greedy.
This notion is extensively studied in Garside theory. To an interested
reader, we recommend the monograph \cite{DDGKM} on the subject.

\section{\protect\label{sec:contribution}Correspondence between factorability
structures and quadratic normalisations}

In this section, a correspondence between factorability structures
and quadratic normalisations is established. Subsection~\ref{subsec:characterisation}
gives a characterisation of factorable monoids in terms of quadratic
normalisations. Subsection~\ref{subsec:factorability-(4,3)} shows
that, although a quadratic normalisation corresponding to a factorable
monoid is not of class $\left(4,3\right)$ in general, it is so if
a defining condition of local factorability structure is strengthened
in a suitable way. Finally, the strengthened definition is shown to
imply the additional assumption introduced in \cite{HO} (and recalled
in Theorem~\ref{thm:stronger-conditions-terminating}) in order to
reach termination of the associated rewriting system. This results
in a characterisation of quadratic normalisations of class $\left(4,3\right)$
in terms of factorability structures.

\subsection{\protect\label{subsec:characterisation}Characterisation of factorable
monoids}

In this subsection, a necessary and sufficient condition is given,
in terms of quadratic normalisations, for a monoid to be factorable.
This is achieved through a syntactic correspondence between a local
factorability structure and the restriction of a quadratic normalisation
map to length-two words. The property~(\ref{enu:local-factorability-structure-4})
of Definition~\ref{def:local-factorability-structure} (of local
factorability structure) is going to be essential in deriving our
main result so, for convenience, we are going to express it compactly
using the domino rule.
\begin{defn}
Let $\left(A,N\right)$ be a quadratic normalisation with the $N$-neutral
element $e$. The \textbf{weak domino rule} is valid for $\overline{N}$
if the domino rule is valid for $\overline{N}$ whenever none of the
elements $r_{1}'$, $r_{2}'$, $s_{2}$ of the diagram~(\ref{eq:domino})
equals $e$.
\end{defn}

Now, we can state the main result.
\begin{thm}
\label{thm:characterisation}A monoid $\left(M,S\right)$ admits
a factorability structure if, and only if, it admits a quadratic normalisation
$\left(N,S\right)$  mod $1$ such that the weak domino rule is valid
for $\overline{N}$. 
\end{thm}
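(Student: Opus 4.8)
The plan is to establish the equivalence by passing through the intermediate notion of local factorability structure, which Theorem \ref{thm:factorability-local-factorability} already identifies with factorability. Concretely, I would show that giving a local factorability structure $\varphi:S^{2}\to S^{2}$ on $\left(M,S\right)$ is the same datum as giving the restriction $\overline{N}:S^{2}\to S^{2}$ of a quadratic normalisation map mod $e$ for which the weak domino rule holds. The bridge is the identification $\varphi=\overline{N}$ (after the bookkeeping of Remark \ref{rem:different-presentations} reconciling the ``padding with $1$'' convention of $\varphi$ against the length-preserving convention of normalisation, with $e=1$). So the proof splits into two implications, in each of which I translate the five conditions of Definition \ref{def:local-factorability-structure} into the conditions defining a quadratic normalisation mod $e$ together with validity of the weak domino rule.

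For the forward direction, suppose $\left(M,S,\eta\right)$ is factorable. By Theorem \ref{thm:factorability-local-factorability}(1), $\overline{N}\coloneqq\eta\mu|_{S^{2}}$ is a local factorability structure $\varphi$. I would set $N=N_{\varphi}$ (padded to preserve length) and verify the axioms of Definition \ref{def:normalisation} and of ``quadratic mod $e$'': idempotency of $\overline{N}$ is condition (\ref{enu:local-factorability-structure-2}); the presentation requirement matches condition (\ref{enu:local-factorability-structure-1}) via Remark \ref{rem:different-presentations}; $N$-neutrality of $e=1$ follows from condition (\ref{enu:local-factorability-structure-3}) together with the observation in Remark \ref{rem:first-component-phi} that $\varphi\left(s,1\right)$ cannot introduce a spurious leading $1$; and quadraticity (the locality property (\ref{enu:quadratic-1}) and the existence of a normalising sequence of positions (\ref{enu:quadratic-2})) should fall out of the recursive structure of $N_{\varphi}$ in Definition \ref{def:normalisation-map} combined with condition (\ref{enu:local-factorability-structure-5}). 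The crucial point is that the weak domino rule is exactly the diagrammatic repackaging of condition (\ref{enu:local-factorability-structure-4}): by Lemma \ref{lem:2121-stable}, $\varphi_{1}\varphi_{2}\varphi_{1}\varphi_{2}\left(r,s,t\right)$ is an extended form of $N_{\varphi}\left(r,s,t\right)$, so the equality $\varphi_{1}\varphi_{2}\varphi_{1}\varphi_{2}=\varphi_{2}\varphi_{1}\varphi_{2}$ asserts that $\varphi_{2}\varphi_{1}\varphi_{2}\left(r,s,t\right)$ is already the $N$-normal extended form, which is precisely the conclusion of the domino rule; the escape clause ``or $\varphi_{2}\varphi_{1}\varphi_{2}$ contains $1$'' becomes the weak-domino exemption for diagrams whose upper-right corner contains $e$.

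For the converse, I would start from a quadratic normalisation $\left(S,N\right)$ mod $e$ for which the weak domino rule holds, set $\varphi=\overline{N}$, and check conditions (\ref{enu:local-factorability-structure-1})--(\ref{enu:local-factorability-structure-5}) of Definition \ref{def:local-factorability-structure}. Conditions (\ref{enu:local-factorability-structure-1}), (\ref{enu:local-factorability-structure-2}), (\ref{enu:local-factorability-structure-3}) come respectively from Proposition \ref{prop:quadratic-presentation}(\ref{enu:quadratic-presentation-3}), idempotency of $\overline{N}$, and the $N$-neutrality characterisation in Proposition \ref{prop:quadratic-presentation}(\ref{enu:quadratic-presentation-2}); condition (\ref{enu:local-factorability-structure-4}) is the weak domino rule read through Lemma \ref{lem:2121-stable} again; and condition (\ref{enu:local-factorability-structure-5}), the compatibility $N_{\varphi}\left(r,s,t\right)=N_{\varphi}\left(\varphi_{1}\left(r,s,t\right)\right)$, should follow from the substitution invariance (\ref{enu:normalisation-3}) of $N$ on length-three words. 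Having produced a local factorability structure, Theorem \ref{thm:factorability-local-factorability}(2) delivers a genuine factorability structure $\eta$, so $\left(M,S\right)$ is factorable.

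\emph{The main obstacle} I anticipate is the precise matching of the weak domino rule to condition (\ref{enu:local-factorability-structure-4}), together with the attendant $1$-versus-$e$ padding. The domino rule of Definition \ref{prop:quadratic-iff-domino} is phrased as a local implication about fixed points of $\overline{N}$, whereas condition (\ref{enu:local-factorability-structure-4}) is an equality of iterated applications of $\varphi$; reconciling these requires Lemma \ref{lem:2121-stable} to translate ``$\varphi_{2}\varphi_{1}\varphi_{2}$ is stable under a further $\varphi_{1}$'' into ``the reduced word is normal,'' and then checking that the single exemption ``contains $1$'' corresponds exactly to ``the upper-right part of the diagram contains $e$,'' with no stray cases arising from the leading-$1$ phenomenon flagged in Remark \ref{rem:first-component-phi}. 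I expect the other axiom verifications to be essentially bookkeeping against the cited propositions, so the heart of the argument lies in making this diagrammatic translation airtight in both directions.
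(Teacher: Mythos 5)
Your overall route coincides with the paper's: both directions pass through local factorability via Theorem \ref{thm:factorability-local-factorability}, identifying $\varphi$ with $\overline{N}$ and padding $N_{\varphi}$ with $1$'s to restore length-preservation. The genuine gap is in your converse direction, at condition (\ref{enu:local-factorability-structure-5}). That condition is a statement about the map $N_{\varphi}$ of Definition \ref{def:normalisation-map}, defined by a specific recursion from $\varphi=\overline{N}$; it is not a statement about the given normalisation map $N$. Substitution invariance (property (\ref{enu:normalisation-3}) of Definition \ref{def:normalisation}) does give $N\left(r|s|t\right)=N\left(\overline{N}_{1}\left(r|s|t\right)\right)$ at once, but transferring this to $N_{\varphi}\left(r,s,t\right)=N_{\varphi}\left(\varphi_{1}\left(r,s,t\right)\right)$ requires knowing that $N_{\varphi}$ and $N$ compute the same word (up to removing $1$'s) on length-three inputs. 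By Lemma \ref{lem:2121-stable}, $N_{\varphi}\left(r,s,t\right)$ is, up to padding, $\overline{N}_{2121}\left(r,s,t\right)$; so the needed bridge is precisely the assertion that the alternating sequence $2121$ already reaches the $N$-normal word, i.e.\ that $\left(S,N\right)$ is of right-class $4$. This is the content of the paper's Lemma \ref{lem:weak-domino-implies-(5,4)} (the weak domino rule implies class $\left(5,4\right)$), proved there by a case analysis on where $e$ can occur in $\overline{N}_{212}\left(r,s,t\right)$. Your proposal omits this step entirely, and property (\ref{enu:quadratic-2}) of quadraticity only guarantees that \emph{some} word-dependent sequence of positions normalises, not this particular one; so as written your verification of condition (\ref{enu:local-factorability-structure-5}) does not go through, and the appeal to Theorem \ref{thm:factorability-local-factorability}(2) is not yet justified.

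A secondary point: in the converse direction you propose to obtain condition (\ref{enu:local-factorability-structure-4}) by ``reading the weak domino rule through Lemma \ref{lem:2121-stable}'', but that lemma formally assumes condition (\ref{enu:local-factorability-structure-4}) among its hypotheses, so using it there would be circular. No lemma is needed: taking $s_{1}|s_{2}=\overline{N}\left(s|t\right)$ in the domino diagram, the weak domino rule \emph{is} condition (\ref{enu:local-factorability-structure-4}) up to the translation between ``contains $e$'' and ``contains $1$'' (which the idempotency of $\overline{N}$ and Remark \ref{rem:first-component-phi} reconcile); this is why the paper simply records that property as ``explicitly assumed''. In the forward direction your use of Lemma \ref{lem:2121-stable} is legitimate, since there $\varphi$ is already known to be a local factorability structure. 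Once you insert Lemma \ref{lem:weak-domino-implies-(5,4)} (or an equivalent class argument) before verifying condition (\ref{enu:local-factorability-structure-5}), your outline matches the paper's proof.
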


The rest of this subsection presents a proof. First we verify that
a factorability structure yields a quadratic normalisation, rather
canonically. Assume that $\left(M,S,\eta\right)$ is a factorable
monoid. Since $N_{\varphi}$ is not length-preserving in general,
it does not make a suitable candidate for a normalisation map in the
sense of Subsection~\ref{subsec:normalisations-and-normal-forms}.
To repair this, let us introduce the following notation.

\begin{notation*}Let $\left(M,S,\eta\right)$ be a factorable monoid.
Denote by $N_{\varphi}'$ a pointwise length-preserving extended form
of $N_{\varphi}$, defined as follows:
\[
w\mapsto N_{\varphi}\left(w\right)|1^{m},\textrm{ with }m=\left|w\right|-\left|N_{\varphi}\left(w\right)\right|.
\]

\end{notation*}
\begin{lem}
\label{lem:factorable-implies-quadratic}If $\left(M,S,\eta\right)$
is a factorable monoid, then $\left(S,N_{\varphi}'\right)$ is a quadratic
normalisation mod $1$ for $M$.
\end{lem}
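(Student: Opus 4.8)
The plan is to check, for $(S,N_{\varphi}')$, the three axioms of Definition \ref{def:normalisation} together with the two requirements of Definition \ref{def:quadratic}. The common thread is the identity $N_{\varphi}(w)=\nf_{\eta}(\ev(w))$ of Theorem \ref{thm:factorability-local-factorability}(\ref{enu:correspondence-eta-phi-4}), which says that $N_{\varphi}(w)$ depends only on $\ev(w)$. First I would note that $\nf_{\eta}$ is a geodesic normal form: it is a normal form by the discussion preceding Lemma \ref{lem:normal-stable}, and it is geodesic because every application of $\eta$ splits off one generator from a geodesic pair, so $\left|\nf_{\eta}(f)\right|=\left|f\right|$. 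Hence $N_{\varphi}'(w)=\nf_{\eta}(\ev(w))|1^{m}$ is exactly the map manufactured from $\nf_{\eta}$ in Proposition \ref{prop:normalisation-normal-form}(\ref{enu:normalisation-normal-form-2}) with $e=1$, where $\ev^{e}=\ev$ since the letter $1$ already evaluates to the identity. Proposition \ref{prop:normalisation-normal-form} then yields that $(S,N_{\varphi}')$ is a normalisation mod $1$ for $M$, which disposes of the normalisation axioms in one stroke.

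Next I would identify the restriction $\overline{N_{\varphi}'}$ with the local factorability structure $\varphi$. For $(s,t)\in S^{2}$ we have $N_{\varphi}'(s,t)=\nf_{\eta}(st)|1^{m}$, and a case split on whether $st=1$, $st\in S_{+}$, or $\left|st\right|=2$ shows in each instance that this equals $\eta(st)=\varphi(s,t)$, the geodesy of $\eta$ forcing the outcome when $\left|st\right|\le 2$. With $\overline{N_{\varphi}'}=\varphi$ in hand, the locality requirement (Definition \ref{def:quadratic}(\ref{enu:quadratic-1})) amounts to showing that $w$ is $N_{\varphi}'$-normal if and only if every length-two factor of $w$ is $\varphi$-fixed. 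For the forward direction, an $N_{\varphi}'$-normal word equals $\nf_{\eta}(\ev(w))|1^{m}$, which is everywhere stable by Lemma \ref{lem:normal-stable} (appending $1$'s preserves stability since $\varphi(s,1)=(s,1)$ and $\varphi(1,1)=(1,1)$), and everywhere stable means precisely that each length-two factor is $\varphi$-fixed. For the converse, property (\ref{enu:local-factorability-structure-3}) forces the $1$'s of a $\varphi$-fixed word into a suffix, so $w=u|1^{m}$ with $u\in S_{+}^{\ast}$ irreducible for the rewriting system of Lemma \ref{lem:presentation-factorable-monoid}; as $\nf_{\eta}(\ev(u))$ is likewise irreducible and represents $\ev(u)$, confluence forces $u=\nf_{\eta}(\ev(u))$, whence $N_{\varphi}'(w)=w$.

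The hard part will be the second quadratic requirement (Definition \ref{def:quadratic}(\ref{enu:quadratic-2})): producing, for each $w$, a finite position sequence $u$ with $N_{\varphi}'(w)=\varphi_{u}(w)$. I would prove this by strong induction on $\left|w\right|$, mirroring the recursion of Definition \ref{def:normalisation-map}. Writing $w=s_{1}|w'$, the inductive hypothesis rewrites $w$ to $s_{1}|N_{\varphi}'(w')$ by $\varphi$-operations at shifted positions, and the operations making up $\underrightarrow{\varphi}$ then produce the word $u_{0}$ of that definition, followed by padding $1$'s. The subtlety is the second branch of the recursion, in which $N_{\varphi}$ deletes a $1$ and recurses on a shorter word: in the length-preserving setting I would instead bubble each created $1$ to the right using $\varphi(1,s)=(s,1)$, reducing to the inductive hypothesis on the shortened word carrying trailing $1$'s, whose computing $\varphi$-operations leave the padding untouched. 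Since the recursion defining $N_{\varphi}$ terminates, only finitely many bubblings are needed, and concatenating all the $\varphi$-operations gives the required sequence $u$.
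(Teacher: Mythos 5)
Your proposal is correct; no step in it fails. It is, however, assembled from different ingredients than the paper's proof, which is a short direct verification. For the normalisation axioms, the paper checks properties (\ref{enu:normalisation-1}) and (\ref{enu:normalisation-2}) of Definition \ref{def:normalisation} by construction and derives property (\ref{enu:normalisation-3}) from Lemma \ref{lem:normal-stable}; you instead recognise $N_{\varphi}'$ as the normalisation mod $1$ that Proposition \ref{prop:normalisation-normal-form}(\ref{enu:normalisation-normal-form-2}) manufactures from the geodesic normal form $\nf_{\eta}$, which settles all three axioms (and even the presentation) in one stroke, at the cost of leaning on Theorem \ref{thm:factorability-local-factorability}(\ref{enu:correspondence-eta-phi-4}) throughout. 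For requirement (\ref{enu:quadratic-2}) of Definition \ref{def:quadratic}, the paper merely asserts that it follows from property (\ref{enu:N_phi-3}) of Definition \ref{def:normalisation-map}; your induction, with freshly created $1$'s bubbled rightwards via $\varphi\left(1,s\right)=\left(s,1\right)$ so that the deletion branch of the recursion is simulated length-preservingly by applications of $\overline{N_{\varphi}'}$, is exactly the elaboration that assertion needs, so on this point you are more complete than the paper. The one genuinely different route is the right-to-left half of requirement (\ref{enu:quadratic-1}): the paper deduces it formally from requirement (\ref{enu:quadratic-2}) --- if every length-two factor of $w$ is normal, then every $\overline{N}$-application fixes $w$, so $N\left(w\right)=\overline{N}_{u}\left(w\right)=w$ --- whereas you argue through irreducibility and confluence of the rewriting system of Lemma \ref{lem:presentation-factorable-monoid}, where two congruent irreducible words must coincide. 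Both are valid; the paper's observation is cheaper and purely syntactic, while yours imports a heavier lemma but makes the link with the associated rewriting system explicit.
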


\begin{proof}
Recalling the relation between factorability structure $\eta$ and
local factorability structure $\varphi$, expressed by Theorem~\ref{thm:factorability-local-factorability},
let us check that $\left(S,N_{\varphi}'\right)$ has,\emph{ mutatis
mutandis}, the three properties of Definition~\ref{def:normalisation}
(of normalisation). Properties~(\ref{enu:normalisation-1}) and (\ref{enu:normalisation-2})
are satisfied by construction. Indeed, $N_{\varphi}'$ is length-preserving
and $N_{\varphi}'\left(s\right)=s$ for every $s$ in $S$, by definition.
Notice that Lemma~\ref{lem:normal-stable} implies the property~(\ref{enu:normalisation-3}).
Namely, since the normal form associated with the factorability structure
is everywhere stable, then so is $N_{\varphi}$, due to Theorem~\ref{thm:factorability-local-factorability}(\ref{enu:correspondence-eta-phi-4}).
This property is then transferred to $N_{\varphi}'$ as well, by construction.
In particular, the equality $N_{\varphi}'\left(u|v|w\right)=N_{\varphi}'\left(u|N_{\varphi}'\left(v\right)|w\right)$
holds for all $S$-words $u$, $v$, $w$.

Let us verify that the obtained normalisation $\left(S,N_{\varphi}'\right)$
is quadratic. The property~(\ref{enu:N_phi-3}) of Definition~\ref{def:normalisation-map}
(of the normalisation map $N_{\varphi}$) implies the property~(\ref{enu:quadratic-2})
of Definition~(\ref{def:quadratic}) (of quadratic normalisation)
and, consequently, also the right-to-left implication of the property~(\ref{enu:quadratic-1})
of Definition~\ref{def:quadratic}. The other direction of the property~(\ref{enu:quadratic-1})
of Definition~\ref{def:quadratic} follows from Lemma~\ref{lem:normal-stable}.

Finally, Lemma~\ref{lem:presentation-factorable-monoid}, together
with Remark~\ref{rem:different-presentations}, provides a presentation
showing that $\left(S,N_{\varphi}'\right)$ is a normalisation mod
$1$ for $M$.
\end{proof}
We say that $\left(S,N_{\varphi}'\right)$ is the \textbf{quadratic
normalisation corresponding to the factorability structure $\eta$}.

\begin{rem}
A note on formality is in order before we continue. As announced by
Remark\ \ref{rem:generating-(sub)set}, we have tried to respect
the two original conventions: of taking a generating set $S$ to be
a subset of a factorable monoid; and of distinguishing $S$ from its
image under an evaluation map arising from a normalisation. Now, however,
that we consider those quadratic normalisations $\left(S,N\right)$
that correspond to factorability structures, we interchangeably write
$e$ and $1$ for an $N$-neutral $S$-letter, depending on whether
we take the viewpoint of normalisation or factorability.
\end{rem}

\begin{rem}
\label{rem:equal-to-local-factorability}Note that, by construction,
the restriction $\overline{N_{\varphi}'}$ of $N_{\varphi}'$ to length-two
words is identically equal to the local factorability structure of
$\left(M,S,\eta\right)$. This fact will be often used implicitly.
\end{rem}

Having obtained a quadratic normalisation, we want to determine its
class. 
\begin{lem}
\label{lem:factorable-implies-right-class-4}If $\left(M,S,\eta\right)$
is a factorable monoid, then the quadratic normalisation corresponding
to $\eta$ is of class $\left(5,4\right)$.
\end{lem}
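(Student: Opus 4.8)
The plan is to read off the minimal class directly from Proposition \ref{prop:class-characterisation}, applied to the quadratic normalisation $\left(S,N_{\varphi}'\right)$ produced in Lemma \ref{lem:factorable-implies-quadratic}. Write $N\coloneqq N_{\varphi}'$ and recall from Remark \ref{rem:equal-to-local-factorability} that its length-two restriction $\overline{N}$ is exactly the local factorability structure $\varphi$ of $\left(M,S,\eta\right)$. By Proposition \ref{prop:class-characterisation}, it then suffices to establish, on $A^{3}$ (with $A=S$ and neutral letter $e=1$), that the five maps
\[
\overline{N}_{21\left[4\right]},\quad\overline{N}_{21\left[5\right]},\quad\overline{N}_{21\left[6\right]},\quad\overline{N}_{12\left[5\right]},\quad\overline{N}_{12\left[6\right]}
\]
all coincide with $N$. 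This simultaneously yields right-class $4$ (through $\overline{N}_{21\left[4\right]}=\overline{N}_{21\left[5\right]}=\overline{N}_{12\left[5\right]}$) and left-class $5$ (through $\overline{N}_{12\left[5\right]}=\overline{N}_{12\left[6\right]}=\overline{N}_{21\left[6\right]}$), hence class $\left(5,4\right)$.

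The first input is Lemma \ref{lem:2121-stable}, whose hypotheses $\varphi=\overline{N}$ satisfies, being a genuine local factorability structure: it tells us that $\varphi_{1}\varphi_{2}\varphi_{1}\varphi_{2}=\overline{N}_{21\left[4\right]}$ sends every $\left(r,s,t\right)$ to an extended form of $N_{\varphi}\left(r,s,t\right)$. Since this map is length-preserving, its value has length three and is therefore precisely $N_{\varphi}\left(r,s,t\right)|1^{m}=N\left(r,s,t\right)$; thus $\overline{N}_{21\left[4\right]}=N$ on $A^{3}$. As $N\left(w\right)$ is $N$-normal, all its length-two factors are fixed by $\overline{N}$, so prolonging the alternating sequence changes nothing and $\overline{N}_{21\left[5\right]}=\overline{N}_{21\left[6\right]}=N$ as well.

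For the two ``$1$-first'' maps I would use the decompositions $\overline{N}_{12\left[5\right]}=\overline{N}_{21\left[4\right]}\circ\overline{N}_{1}$ (apply $\overline{N}_{1}=\varphi_{1}$ first, then the sequence $2,1,2,1$) and $\overline{N}_{12\left[6\right]}=\overline{N}_{2}\circ\overline{N}_{12\left[5\right]}$. By the previous step the first one equals $w\mapsto N\left(\varphi_{1}\left(w\right)\right)$, and condition (\ref{enu:local-factorability-structure-5}) of Definition \ref{def:local-factorability-structure}, which asserts $N_{\varphi}\left(\varphi_{1}\left(w\right)\right)=N_{\varphi}\left(w\right)$, upgrades after equal-length padding to $N\left(\varphi_{1}\left(w\right)\right)=N\left(w\right)$; hence $\overline{N}_{12\left[5\right]}=N$. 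Then $\overline{N}_{12\left[6\right]}=\overline{N}_{2}\circ N=N$, since $\overline{N}_{2}$ fixes the $N$-normal word $N\left(w\right)$. Assembling the five equalities through Proposition \ref{prop:class-characterisation} gives the asserted class $\left(5,4\right)$.

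The main obstacle I anticipate is that condition (\ref{enu:local-factorability-structure-5}) is stated only for triples in $S_{+}^{3}$, whereas the class equalities must hold on all of $A^{3}=S^{3}$, including words containing the neutral letter $1$. I would dispose of these boundary cases separately, using that $1$ is $N$-neutral together with the behaviour of $\varphi$ on pairs involving $1$ recorded in Remark \ref{rem:first-component-phi} and in conditions (\ref{enu:local-factorability-structure-2}) and (\ref{enu:local-factorability-structure-3}) of Definition \ref{def:local-factorability-structure}: such a word reduces, up to padding, to one of length at most two, on which every $\overline{N}_{i}$ acts idempotently, so the required identities become trivial. This bookkeeping is routine, but it is the only genuinely fiddly point; the substance of the argument is the interplay between the $21\left[4\right]$-stability supplied by Lemma \ref{lem:2121-stable} and the left-shift symmetry supplied by condition (\ref{enu:local-factorability-structure-5}).
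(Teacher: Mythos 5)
Your proposal is correct, and its first half is essentially the paper's argument: both obtain right-class $4$ by feeding the local factorability structure $\varphi=\overline{N}$ into Lemma \ref{lem:2121-stable} and using length-preservation to identify the resulting extended form of $N_{\varphi}\left(r,s,t\right)$ with $N_{\varphi}'\left(r,s,t\right)$. (The paper additionally runs an explicit case analysis over triples containing $1$; as your write-up implicitly exploits, this is not strictly needed, since Lemma \ref{lem:2121-stable} is stated on all of $A^{3}$.) Where you genuinely diverge is the left class. The paper gets left-class $5$ in one line from Lemma \ref{lem:minimal-class}: the minimal class is $\left(m,n\right)$ with $\left|m-n\right|\leq1$, so right-class $4$ forces left-class at most $5$. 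You instead verify $\overline{N}_{12\left[5\right]}=N$ directly, via the decomposition $\overline{N}_{12\left[5\right]}=\overline{N}_{21\left[4\right]}\circ\overline{N}_{1}$ and condition (\ref{enu:local-factorability-structure-5}) of Definition \ref{def:local-factorability-structure}; this works, but since that condition is stated only on $S_{+}^{3}$ it forces you into the boundary-case bookkeeping you flag (which does go through, using idempotency and $\varphi\left(1,s\right)=\varphi\left(s,1\right)=\left(s,1\right)$). Note, however, that this entire fiddly point is avoidable: by property (\ref{enu:normalisation-3}) of Definition \ref{def:normalisation}, applied with $u$ empty, $v$ the first two letters and $w$ the last letter, \emph{any} normalisation satisfies $N\left(\overline{N}_{1}\left(w\right)\right)=N\left(w\right)$ on all of $A^{3}$, so $\overline{N}_{12\left[5\right]}=N\circ\overline{N}_{1}=N$ with no case distinction and no appeal to condition (\ref{enu:local-factorability-structure-5}). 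Likewise, your detour through the five equalities of Proposition \ref{prop:class-characterisation} is redundant, since $\overline{N}_{21\left[4\right]}=N$ and $\overline{N}_{12\left[5\right]}=N$ already give class $\left(5,4\right)$ by the definition of class. In short: your route is sound and more self-contained than the paper's, at the cost of extra work that either Lemma \ref{lem:minimal-class} or the observation above would eliminate.
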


\begin{proof}
Denote by $\varphi$ the local factorability structure corresponding
to $\eta$ in the sense of Theorem~\ref{thm:factorability-local-factorability}.
If $\left(r,s,t\right)$ is an $S$-word, then Lemma~\ref{lem:2121-stable}
says that the word $N_{\varphi}'\left(r,s,t\right)$ equals the
word $\varphi_{2121}\left(r,s,t\right)$. We conclude that quadratic
normalisation $\left(S,N_{\varphi}'\right)$ is of right-class $4$.
Then Lemma~\ref{lem:minimal-class} grants the left-class $5$.
\end{proof}
The following example demonstrates that the minimal right-class of
a quadratic normalisation corresponding to a factorability structure
is not smaller than $4$, in general.
\begin{example}[{\cite[Example 2.1.13]{Hes}}]
\label{exa:sign}Consider the monoid $\left(\mathbb{Z},+\right)$
with respect to the generating set $\left\{ -1,+1\right\} $. The
factorisation map is defined by
\[
g\mapsto\left(\sgn\left(g\right),g-\sgn\left(g\right)\right),
\]
where $\sgn:\mathbb{Z}\to\left\{ -1,0,+1\right\} $ denotes the sign
function. One can check that this is a factorable monoid.\footnote{Technically speaking, this is a factorable group by \cite[Example 3.2.2]{Vis},
and a weakly factorable monoid. Hence, it is also a factorable monoid
by \cite[Proposition 2.1.28]{Hes}.} Note that $\varphi_{212}\left(1,-1,-1\right)$ equals $\left(0,-1,0\right)$,
whereas $\varphi_{2121}\left(1,-1,-1\right)$ equals $\left(-1,0,0\right)$.
Therefore, the minimal right-class of the corresponding quadratic
normalisation is at least $4$; then it is exactly $4$, by Lemma~\ref{lem:factorable-implies-right-class-4}.
\end{example}

The next example, adapted from \cite[Appendix, Proposition .7]{HO},
shows that the minimal left-class of a quadratic normalisation corresponding
to a factorability structure is not smaller than $5$ in general.
\begin{example}
\label{exa:right-class-at-least-4}Let $S_{+}$ be the following set:
\[
\left\{ a_{1},a_{2},b_{1},b_{2},b_{3},b_{4},b_{5},b_{6},c_{1},c_{2},c_{3},c_{4},c_{5},c_{6},d_{1},d_{2},e_{2},e_{3},f_{2},f_{3},g_{2},g_{3},h_{2},h_{3},i,j,k\right\} .
\]
Define a map $\varphi:S^{2}\to S^{2}$ as follows:
\begin{gather*}
\left(b_{1},a_{1}\right)\stackrel{\varphi}{\mapsto}\left(b_{2},a_{2}\right),\quad\left(c_{1},b_{2}\right)\stackrel{\varphi}{\mapsto}\left(c_{2},b_{3}\right),\quad\left(d_{1},c_{2}\right)\stackrel{\varphi}{\mapsto}\left(d_{2},c_{3}\right),\quad\left(c_{3},b_{3}\right)\stackrel{\varphi}{\mapsto}\left(c_{4},b_{4}\right),\\
\left(b_{4},a_{2}\right)\stackrel{\varphi}{\mapsto}\left(b_{5},a_{1}\right),\quad\left(c_{4},b_{5}\right)\stackrel{\varphi}{\mapsto}\left(c_{5},b_{6}\right),\quad\left(d_{2},c_{5}\right)\stackrel{\varphi}{\mapsto}\left(d_{1},c_{6}\right),\quad\left(c_{6},b_{6}\right)\stackrel{\varphi}{\mapsto}\left(c_{1},b_{1}\right),\\
\left(b_{3},a_{2}\right)\stackrel{\varphi}{\mapsto}\left(e_{2},1\right),\quad\left(b_{6},a_{1}\right)\stackrel{\varphi}{\mapsto}\left(e_{3},1\right),\quad\left(c_{2},e_{2}\right)\stackrel{\varphi}{\mapsto}\left(g_{2},f_{2}\right),\quad\left(c_{5},e_{3}\right)\stackrel{\varphi}{\mapsto}\left(g_{3},f_{3}\right),\\
\left(c_{3},e_{2}\right)\stackrel{\varphi}{\mapsto}\left(g_{3},f_{3}\right),\quad\left(c_{6},e_{3}\right)\stackrel{\varphi}{\mapsto}\left(g_{2},f_{2}\right)\quad\left(d_{1},g_{2}\right)\stackrel{\varphi}{\mapsto}\left(i,h_{2}\right),\quad\left(d_{2},g_{3}\right)\stackrel{\varphi}{\mapsto}\left(i,h_{3}\right),\\
\left(h_{2},f_{2}\right)\stackrel{\varphi}{\mapsto}\left(k,j\right),\quad\left(h_{3},f_{3}\right)\stackrel{\varphi}{\mapsto}\left(k,j\right)\quad\left(1,s\right)\stackrel{\varphi}{\mapsto}\left(s,1\right)\textrm{ for all }s\in S,
\end{gather*}
and $\left(s,t\right)\stackrel{\varphi}{\mapsto}\left(s,t\right)$
if $\left(s,t\right)$ is not listed above.

By \cite[Appendix, Proposition .7]{HO}, the map $\varphi$ is a local
factorability structure. Let us determine the class of the corresponding
quadratic normalisation.

Computing $\varphi_{1212}\left(c_{1},b_{1},a_{1}\right)$ gives 
\[
\left(c_{1},b_{1},a_{1}\right)\stackrel{\varphi_{1}}{\mapsto}\left(c_{1},b_{1},a_{1}\right)\stackrel{\varphi_{2}}{\mapsto}\left(c_{1},b_{2},a_{2}\right)\stackrel{\varphi_{1}}{\mapsto}\left(c_{2},b_{3},a_{2}\right)\stackrel{\varphi_{2}}{\mapsto}\left(c_{2},e_{2},1\right),
\]
whereas
\[
\varphi_{12121}\left(c_{1},b_{1},a_{1}\right)=\varphi_{1}\left(c_{2},e_{2},1\right)=\left(g_{2},f_{2},1\right).
\]
Hence the minimal left-class is at least $5$.

The above computation also gives $\varphi_{212}\left(c_{1},b_{1},a_{1}\right)=\left(c_{2},e_{2},1\right)$,
whereas
\[
\varphi_{2121}\left(c_{1},b_{1},a_{1}\right)=\varphi_{1}\left(c_{2},e_{2},1\right)=\left(g_{2},f_{2},1\right).
\]
Thus, the minimal right-class is at least $4$, as expected according
to Lemma~\ref{lem:minimal-class}.
\end{example}

The previous two examples witness that the estimate of class in Lemma~\ref{lem:factorable-implies-right-class-4}
is as good as one can hope for. On the other hand, observe that not
every quadratic normalisation of class $\left(5,4\right)$ is corresponding
to a factorability structure, as demonstrated by the following example,
adapted from \cite[Example 3.15]{DG}.
\begin{example}
\label{exa:suggests-candidate}Let $A=\left\{ a,b_{1},b_{2},b_{3},b_{4},b_{5}\right\} $,
and let $R$ consist of the rules $ab_{i}\rightarrow ab_{i+1}$ for
$i<5$ even, and $b_{i}a\rightarrow b_{i+1}a$ for $i<5$ odd. The
rewriting system $\left(A,R\right)$ is clearly quadratic and reduced.
Notice that it is also terminating because each rewriting rule only
increases the index of a letter $b$ in a word. Furthermore, it is
confluent, as illustrated by the following diagram (it suffices to
investigate the so-called critical pairs, see e.g.\ \cite[Section 1]{LP}):\[
\begin{tikzcd}[cramped, row sep=small, column sep=small] & b_{i+1}ab_{j} \arrow[rd, bend left=0] \\ b_{i}ab_{j} \arrow[ru, bend left=0] \arrow[rd, bend right=0] & & b_{i+1}ab_{j+1} \\ & b_{i}ab_{j+1} \arrow[ru, bend right=0] \end{tikzcd}.
\]

Denote by $\left(A,N\right)$ the quadratic normalisation associated
with $\left(A,R\right)$ by Proposition~\ref{prop:normalisation-rewriting}.
Let us determine the minimal class of $\left(A,N\right)$. If a length-three
word does not begin and end with the letter $a$, then it is either
$N$-normal or it becomes $N$-normal in a single step. On the other
hand, normalising $a|b_{1}|a$ takes four steps starting from the
right (or five steps starting from the left):

\[
a|b_{1}|a\left(\stackrel{\overline{N}_{1}}{\mapsto}a|b_{1}|a\right)\stackrel{\overline{N}_{2}}{\mapsto}a|b_{2}|a\stackrel{\overline{N}_{1}}{\mapsto}a|b_{3}|a\stackrel{N_{2}}{\mapsto}a|b_{4}|a\stackrel{N_{1}}{\mapsto}a|b_{5}|a.
\]
The minimal class of $\left(A,N\right)$ is thus $\left(5,4\right)$.

However, the normalisation $\left(A,N\right)$ is not corresponding
to any factorability structure. To see this, observe that 
\[
\overline{N}_{212}\left(a,b_{1},a\right)=\left(a,b_{4},a\right),
\]
whereas 
\[
\overline{N}_{2121}\left(a,b_{1},a\right)=\left(a,b_{5},a\right).
\]
Therefore, $\overline{N}$ fails to admit the property~(\ref{enu:local-factorability-structure-4})
of Definition~\ref{def:local-factorability-structure} (of local
factorability structure). We conclude (by Remark~\ref{rem:equal-to-local-factorability})
that $\overline{N}$ does not correspond to a factorability structure.
\end{example}

A natural question to ask is: among quadratic normalisations of class
$\left(5,4\right)$, what distinguishes those that correspond to a
factorability structure? Example~\ref{exa:suggests-candidate} suggests
a candidate (by what it is lacking): simply impose the weak domino
rule upon quadratic normalisation of class $\left(5,4\right)$. Before
testing sufficiency of these two properties combined, let us notice
that they are not independent from each other.
\begin{lem}
\label{lem:weak-domino-implies-(5,4)}Let $\left(S,N\right)$ be a
quadratic normalisation having an $N$-neutral element $e$. If the
weak domino rule is valid for $\overline{N}$, then $\left(S,N\right)$
is of class $\left(5,4\right)$.
\end{lem}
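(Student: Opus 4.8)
The plan is to prove the two halves of the class $\left(5,4\right)$ separately, but to do the real work only for the right-class, reducing the left-class to it. First I would invoke Lemma \ref{lem:minimal-class}: the minimal class of $\left(A,N\right)$ is either $\left(\infty,\infty\right)$ or of the form $\left(m_{0},n_{0}\right)$ with $\left|m_{0}-n_{0}\right|\le1$. Hence, once I show that $\left(A,N\right)$ is of right-class $4$ (so the minimal right-class $n_{0}$ is at most $4$, in particular finite), the minimal left-class satisfies $m_{0}\le n_{0}+1\le5$, so $\left(A,N\right)$ is of left-class $5$. Thus it suffices to prove that $\left(A,N\right)$ is of right-class $4$, i.e. that $\overline{N}_{2121}\left(r,s,t\right)=N\left(r,s,t\right)$ for every $\left(r,s,t\right)\in A^{3}$.

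Fix such a triple and name the four right-to-left steps: set $\overline{N}\left(s,t\right)=\left(s_{1},t_{1}\right)$, then $\overline{N}\left(r,s_{1}\right)=\left(r_{1},s_{2}\right)$, then $\overline{N}\left(s_{2},t_{1}\right)=\left(s_{3},t_{2}\right)$, and finally $\overline{N}\left(r_{1},s_{3}\right)=\left(r_{2},s_{4}\right)$, so that $\overline{N}_{2121}\left(r,s,t\right)=\left(r_{2},s_{4},t_{2}\right)$. By the quadratic property (Definition \ref{def:quadratic}(\ref{enu:quadratic-1})) this triple is $N$-normal as soon as both of its length-two factors are fixed points of $\overline{N}$. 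The left factor $\left(r_{2},s_{4}\right)=\overline{N}\left(r_{1},s_{3}\right)$ is a fixed point by idempotency of $\overline{N}$, so the whole task reduces to showing that the right factor $\left(s_{4},t_{2}\right)$ is a fixed point.

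To handle $\left(s_{4},t_{2}\right)$ I would feed these data into the domino rule through the two squares
\[\overline{N}\left(r|s_{1}\right)=\left(r_{1},s_{2}\right),\qquad\overline{N}\left(s_{2}|t_{1}\right)=\left(s_{3},t_{2}\right),\]
which share the middle vertical edge $s_{2}$, have bottom row $\left(s_{1},t_{1}\right)$ and left vertical edge $r$. The three fixed-point hypotheses of the domino rule are that $\left(s_{1},t_{1}\right)$, $\left(r_{1},s_{2}\right)$ and $\left(s_{3},t_{2}\right)$ are fixed points, which holds as each is a value of $\overline{N}$; its conclusion is that the top row $\left(r_{1},s_{3}\right)$ is a fixed point. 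When the rule applies, $\left(r_{1},s_{3}\right)$ is a fixed point, whence $\left(r_{2},s_{4}\right)=\left(r_{1},s_{3}\right)$ and $\left(s_{4},t_{2}\right)=\left(s_{3},t_{2}\right)$ is a fixed point (here $\overline{N}_{2121}$ and $\overline{N}_{212}$ even agree). By the weak domino hypothesis the only obstruction is that the upper-right part of diagram (\ref{eq:domino})---the pair $\left(s_{3},t_{2}\right)=\overline{N}\left(s_{2},t_{1}\right)$---contains $e$. The key observation handling this case is that $e$ must then occupy the \emph{last} slot: since $\left(s_{3},t_{2}\right)$ is a fixed point, if $s_{3}=e$ then $\overline{N}\left(e|t_{2}\right)=\left(e,t_{2}\right)$, while $N$-neutrality (Proposition \ref{prop:quadratic-presentation}(\ref{enu:quadratic-presentation-2})) gives $\overline{N}\left(e|t_{2}\right)=t_{2}|e$, forcing $t_{2}=e$. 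So $t_{2}=e$ in every failing case, and then $\left(s_{4},t_{2}\right)=\left(s_{4},e\right)$ is a fixed point, again by $N$-neutrality. In all cases $\left(s_{4},t_{2}\right)$ is a fixed point, so $\overline{N}_{2121}\left(r,s,t\right)$ is $N$-normal.

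It remains to identify this normal triple with $N\left(r,s,t\right)$: since each $\overline{N}_{i}$ preserves $N$ by property (\ref{enu:normalisation-3}) of Definition \ref{def:normalisation}, we get $N\left(\overline{N}_{2121}\left(r,s,t\right)\right)=N\left(r,s,t\right)$, and the left-hand side equals $\overline{N}_{2121}\left(r,s,t\right)$ because that word is $N$-normal. This yields right-class $4$, hence class $\left(5,4\right)$ by the first paragraph. I expect the main obstacle to be the bookkeeping of the domino configuration together with the crucial observation that a failure of the domino rule forces $e$ into the rightmost position, exactly where $N$-neutrality rescues the argument at the cost of one extra step --- turning the right-class $3$ coming from the full domino rule (Proposition \ref{prop:quadratic-iff-domino}) into right-class $4$.
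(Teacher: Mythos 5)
Your proof is correct and follows essentially the same route as the paper: show that $\overline{N}_{2121}\left(r,s,t\right)$ is always $N$-normal (so right-class $4$), then invoke Lemma \ref{lem:minimal-class} to get left-class $5$. The only cosmetic difference is in the failure case of the domino rule: you pinpoint that a failure forces $e$ into the rightmost slot $t_{2}$, so the right factor $\left(s_{4},t_{2}\right)$ is fixed by $N$-neutrality, whereas the paper case-analyses the number of occurrences of $e$ in $\overline{N}_{212}\left(r,s,t\right)$ --- both arguments rest on the same two observations, namely neutrality of $e$ and the fact that the first component of an $\overline{N}$-value can be $e$ only if the second one is.
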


\begin{proof}
Let $\left(r,s,t\right)$ be an $S$-word. If 
\[
\overline{N}_{2121}\left(r,s,t\right)=\overline{N}_{212}\left(r,s,t\right),
\]
then
\[
N\left(r,s,t\right)=\overline{N}_{212}\left(r,s,t\right),
\]
so it takes at most three steps starting with $\overline{N}_{2}$
to normalise $\left(r,s,t\right)$.

Otherwise, $\overline{N}_{212}\left(r,s,t\right)$ contains $e$ by
the weak domino rule. Denote $\left(a,b,c\right)\coloneqq\overline{N}_{212}\left(r,s,t\right).$
\begin{casenv}
\item If $e$ occurs exactly once in the triple $\left(a,b,c\right)$, then
it cannot be at the leftmost position in $\overline{N}_{21}\left(r,s,t\right)$.
So, it has to be at the rightmost position in $\overline{N}_{212}\left(r,s,t\right)$.
In other words, $c$ has to be equal to $e$. Consequently,
\begin{equation}
N\left(a,b,c\right)=\overline{N}_{1}\left(a,b,c\right)=\overline{N}_{2121}\left(r,s,t\right).\label{eq:weak-domino-implies-(5,4)}
\end{equation}
\item If $e$ occurs exactly twice in the triple $\left(a,b,c\right)$,
then either $a\neq e$ or $b\neq e$. In each case, the equalities~(\ref{eq:weak-domino-implies-(5,4)})
hold.
\item If $e$ occurs three times in the triple $\left(a,b,c\right)$, then
clearly the equalities~(\ref{eq:weak-domino-implies-(5,4)}) hold.
\end{casenv}
Therefore, $\left(S,N\right)$ is of right class $4$, hence of class
$\left(5,4\right)$, by Lemma~\ref{lem:minimal-class}.
\end{proof}
The following proposition shows that adding the weak domino rule
to the properties of quadratic normalisation (and thus granting the
class $\left(5,4\right)$ by Lemma~\ref{lem:weak-domino-implies-(5,4)})
suffices to yield factorability.
\begin{prop}
\label{prop:normalisation-yields-local-factorability}Let $\left(S,N\right)$
be a quadratic normalisation mod $e$ for a monoid $M$. If the weak
domino rule is valid for $\overline{N}$, then \textup{$\overline{N}$}
is a local factorability structure.
\end{prop}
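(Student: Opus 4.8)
The plan is to verify, one by one, the five conditions of Definition \ref{def:local-factorability-structure} for the map $\varphi:=\overline{N}$, throughout identifying the $N$-neutral letter $e$ with the identity letter $1$. Three of them are essentially bookkeeping. Idempotency of $\overline{N}$ (condition \ref{enu:local-factorability-structure-2}) and the presentation $\langle S_+\mid\{s|t=\pi_{e}(\overline{N}(s|t))\}\rangle$ of $M$ are provided verbatim by Proposition \ref{prop:quadratic-presentation}(\ref{enu:quadratic-presentation-3}); this presentation differs from the one required in condition \ref{enu:local-factorability-structure-1} only by the deletion of letters $e$, which evaluate to $1$, so the two present the same monoid exactly as in Remark \ref{rem:different-presentations}, giving condition \ref{enu:local-factorability-structure-1}. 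Condition \ref{enu:local-factorability-structure-3}, namely $\varphi(1,s)=(s,1)$, is the $N$-neutrality relation (\ref{eq:neutral}) specialised to the length-two word $e|s$: it reads $\overline{N}(e|s)=N(s)|e=s|e$, since $N$ restricts to the identity on single letters.

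The substantive condition is \ref{enu:local-factorability-structure-4}, which, with $\varphi=\overline{N}$, asks that for every $(r,s,t)\in S_+^3$ either $\overline{N}_{2121}(r,s,t)=\overline{N}_{212}(r,s,t)$, or $\overline{N}_{212}(r,s,t)$ contains $1$. This is precisely the dichotomy that the proof of Lemma \ref{lem:weak-domino-implies-(5,4)} invokes ``by assumption'', so I would extract it from the hypothesis that the weak domino rule holds for $\overline{N}$ in the same manner: for a fixed triple, when the domino rule does hold for the configuration attached to $(r,s,t)$, the class $(4,3)$ behaviour underlying Proposition \ref{prop:quadratic-iff-domino} forces $\overline{N}_{2121}(r,s,t)=\overline{N}_{212}(r,s,t)$ on that triple; and when it fails, the weak domino rule places an $e$ in the upper-right corner of the square (\ref{eq:domino}), which is exactly the occurrence of a letter $1$ in $\overline{N}_{212}(r,s,t)$.

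Condition \ref{enu:local-factorability-structure-5} asks that $N_{\varphi}(r,s,t)=N_{\varphi}(\varphi_{1}(r,s,t))$. Having already secured conditions \ref{enu:local-factorability-structure-2}--\ref{enu:local-factorability-structure-4}, I may apply Lemma \ref{lem:2121-stable} to both the triple $(r,s,t)$ and the triple $\varphi_{1}(r,s,t)$: it exhibits $\overline{N}_{2121}(r,s,t)$ as an extended form of $N_{\varphi}(r,s,t)$ and $\overline{N}_{12121}(r,s,t)=\varphi_{1}\varphi_{2}\varphi_{1}\varphi_{2}(\varphi_{1}(r,s,t))$ as an extended form of $N_{\varphi}(\varphi_{1}(r,s,t))$. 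By Lemma \ref{lem:weak-domino-implies-(5,4)} the normalisation is of class $(5,4)$, hence of right-class $4$, so Proposition \ref{prop:class-characterisation}(2) yields $\overline{N}_{2121}=\overline{N}_{12121}$. The two extended forms therefore coincide, and since $N_{\varphi}$ of a word carries no letter $1$, stripping the trailing $1$'s from each side gives the required equality.

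The main obstacle is the translation performed in the second paragraph: converting the purely diagrammatic statement of the weak domino rule (an $e$ in a prescribed corner of (\ref{eq:domino})) into the algebraic assertion that $\overline{N}_{212}(r,s,t)$ carries a letter $1$, and matching the failure of the domino rule for a single configuration with the inequality $\overline{N}_{2121}(r,s,t)\neq\overline{N}_{212}(r,s,t)$. Once that correspondence is pinned down, everything else is assembly of the cited results.
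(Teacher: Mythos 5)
Your proposal is correct and follows essentially the same route as the paper: it verifies the five conditions of Definition \ref{def:local-factorability-structure} one by one, using Proposition \ref{prop:quadratic-presentation} and Remark \ref{rem:different-presentations} for the bookkeeping conditions, the weak domino rule for condition (\ref{enu:local-factorability-structure-4}), and the class $\left(5,4\right)$ from Lemma \ref{lem:weak-domino-implies-(5,4)} together with Proposition \ref{prop:class-characterisation} for condition (\ref{enu:local-factorability-structure-5}). If anything, you are somewhat more careful than the paper, which disposes of condition (\ref{enu:local-factorability-structure-4}) with ``this property is explicitly assumed'' and establishes condition (\ref{enu:local-factorability-structure-5}) for the map $N$ rather than for $N_{\varphi}$, whereas you spell out the diagram-to-algebra translation and route the last condition through Lemma \ref{lem:2121-stable}, which handles the passage between $N$ and $N_{\varphi}$ explicitly.
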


\begin{proof}
The following list shows that the map $\overline{N}$ has, \emph{mutatis
mutandis}, all the properties of Definition~\ref{def:local-factorability-structure}
(of local factorability structure).
\begin{enumerate}
\item By Proposition~\ref{prop:quadratic-presentation}(\ref{enu:quadratic-presentation-3}),
$M$ admits the presentation
\[
\left\langle S_{+}\vert\left\{ s|t=\pi_{e}\left(\overline{N}\left(s|t\right)\right)\vert s,t\in S_{+}\right\} \right\rangle .
\]
Although this presentation is not the same as the one in Definition~\ref{def:local-factorability-structure},
the two are equivalent (see Remark~\ref{rem:different-presentations}).
\item By the property~(\ref{enu:normalisation-3}) of Definition~\ref{def:normalisation}
(of normalisation), $\overline{N}$ is idempotent.
\item By Proposition~\ref{prop:quadratic-presentation}(\ref{enu:quadratic-presentation-2}),
the equality $\overline{N}\left(e|s\right)=s|e$ holds for all $s$
in $S_{+}$.
\item This property is explicitly assumed. 
\item By Lemma~\ref{lem:weak-domino-implies-(5,4)}, the normalisation
$\left(S,N\right)$ is of class $\left(5,4\right)$. By the definition
of right-class $4$, the $N$-normal word $N\left(r,s,t\right)$ equals
$\overline{N}_{2121}\left(r,s,t\right)$, which further equals $\overline{N}_{12121}\left(r,s,t\right)$
by the definition of left-class $5$. Hence, $N\left(r,s,t\right)$
equals $\left(N\circ\overline{N}_{1}\right)\left(r,s,t\right)$ for
all $\left(r,s,t\right)$ in $S_{+}^{3}$.
\end{enumerate}
\end{proof}
We have, thus, proved one direction of Theorem~\ref{thm:characterisation}.
The other one follows from Remark~\ref{rem:equal-to-local-factorability},
as the restriction to length-two words of a quadratic normalisation
corresponding to a factorability structure has all the properties
of the corresponding local factorability structure and, in particular,
the weak domino rule is valid. Thereby, we have completed the proof
of Theorem~\ref{thm:characterisation}.

The following corollary is an immediate consequence.
\begin{cor}
\label{cor:immediate-consequence}~
\begin{enumerate}
\item Associating a factorability structure to a quadratic normalisation
such that the weak domino rule is valid, and associating a quadratic
normalisation to a factorability structure (the weak domino rule being
valid automatically), as given above, are inverse transformations.
\item \label{enu:immediate-consequence-2}The normal forms associated with
a factorability structure and the corresponding quadratic normalisation
are the same.
\item \label{enu:immediate-consequence-3}The rewriting systems associated
with a factorability structure and the corresponding quadratic normalisation
are equivalent, the only difference being dummy letters to preserve
length in the latter.
\end{enumerate}
\end{cor}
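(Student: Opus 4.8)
The plan is to handle the three parts separately, in each case reducing the assertion to the correspondence between factorability structures and local factorability structures (Theorem \ref{thm:factorability-local-factorability}) together with the fact, recorded in Proposition \ref{prop:quadratic-presentation}, that a quadratic normalisation is completely determined by its restriction $\overline{N}$. For part (1), starting from a factorability structure $\eta$, the forward assignment first produces the local factorability structure $\varphi=\overline{N_\varphi'}$ (Remark \ref{rem:equal-to-local-factorability}) and then the quadratic normalisation $\left(S,N_\varphi'\right)$ of Lemma \ref{lem:factorable-implies-quadratic}. Running the reverse assignment, Proposition \ref{prop:normalisation-yields-local-factorability} recovers $\overline{N_\varphi'}=\varphi$ as a local factorability structure, and Theorem \ref{thm:factorability-local-factorability}(2) turns it into a factorability structure which, by the inverse-construction statement Theorem \ref{thm:factorability-local-factorability}(3), is again $\eta$. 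For the opposite round trip, beginning with a quadratic normalisation $\left(S,N\right)$ mod $e$ for which the weak domino rule holds, Proposition \ref{prop:normalisation-yields-local-factorability} yields the local factorability structure $\overline{N}$; Theorem \ref{thm:factorability-local-factorability}(2)--(3) sends it to a factorability structure $\eta$ with $\eta\mu|_{S^2}=\overline{N}$, and Lemma \ref{lem:factorable-implies-quadratic} applied to $\eta$ returns a quadratic normalisation $\left(S,N_\varphi'\right)$ with $\overline{N_\varphi'}=\overline{N}$. It then remains to upgrade this equality of restrictions to an equality of full maps $N=N_\varphi'$, which is exactly where Proposition \ref{prop:quadratic-presentation} is invoked.

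For part (2), I would combine Theorem \ref{thm:factorability-local-factorability}(4), which gives $\nf_\eta\left(f\right)=N_\varphi\left(w\right)$ for any $w$ representing $f$, with Proposition \ref{prop:normalisation-normal-form}(1), which gives the normal form $\nf\left(f\right)=\pi_e\left(N\left(w\right)\right)$ of the corresponding normalisation. Since $N_\varphi'$ is by definition $N_\varphi$ padded with dummy letters $1=e$, one has $\pi_e\left(N_\varphi'\left(w\right)\right)=N_\varphi\left(w\right)$; combined with the equality $N=N_\varphi'$ from part (1), this yields $\pi_e\left(N\left(w\right)\right)=N_\varphi\left(w\right)=\nf_\eta\left(f\right)$, so the two normal forms coincide.

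For part (3), I would place the rules of Lemma \ref{lem:presentation-factorable-monoid}, namely $\left(s,t\right)\rightarrow\eta\mu\left(s,t\right)$ for non-stable pairs, alongside the rules of Proposition \ref{prop:normalisation-rewriting-e}(1), namely $s|t\rightarrow\pi_e\left(\overline{N}\left(s|t\right)\right)$ for pairs that are not $N$-normal. A length-two word $\left(s,t\right)$ is stable precisely when $\varphi\left(s,t\right)=\left(s,t\right)$, that is, when $\overline{N}\left(s|t\right)=s|t$, that is, when $s|t$ is $N$-normal, so the two rule sets share the same sources; and since $\overline{N}=\eta\mu|_{S^2}$, their targets agree once $\pi_e$ deletes the dummy letters. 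The only remaining discrepancy is the bookkeeping of trailing $1$'s, which is exactly the translation described in Remark \ref{rem:different-presentations}.

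The genuinely load-bearing step is the passage from equality of the restrictions $\overline{N}$ to equality of the full normalisation maps in part (1); everything else amounts to matching definitions and tracking the neutral letter $e$. The care needed there is to confirm that both $N$ and $N_\varphi'$ are indeed quadratic normalisations mod $e$ for the same monoid and with the same $\overline{N}$, so that the determinacy afforded by Proposition \ref{prop:quadratic-presentation} legitimately applies.
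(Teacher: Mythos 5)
Your proposal is correct and takes essentially the paper's intended route: the paper states this corollary without proof, as an immediate consequence of Theorem \ref{thm:characterisation}, Theorem \ref{thm:factorability-local-factorability}, Remark \ref{rem:equal-to-local-factorability}, Lemma \ref{lem:presentation-factorable-monoid}, Proposition \ref{prop:normalisation-rewriting-e} and Remark \ref{rem:different-presentations}, and your argument assembles exactly these ingredients in the intended way. In particular, you correctly isolate the one non-citation step, namely that a quadratic normalisation is determined by its restriction $\overline{N}$ (the fact recorded just before Proposition \ref{prop:quadratic-presentation}), which is what upgrades $\overline{N}=\overline{N_{\varphi}'}$ to $N=N_{\varphi}'$ in part (1).
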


\subsection{\protect\label{subsec:factorability-(4,3)}Factorability in relation
to quadratic normalisation of class $\left(4,3\right)$}

\uline{}Having established a general correspondence between a factorability
structure and a quadratic normalisation in the previous subsection,
we are now going to further elaborate these links in the case when
the quadratic normalisation involved is of class $\left(4,3\right)$.
First we emphasise a particular, yet important, consequence of Proposition~\ref{prop:normalisation-yields-local-factorability}.
\begin{cor}
\label{cor:(4,3)-implies-factorable}If $\left(S,N\right)$ is a quadratic
normalisation of class $\left(4,3\right)$ mod $1$ for a monoid $M$,
then \textup{$\overline{N}$} is a local factorability structure.
\end{cor}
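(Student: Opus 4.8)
The plan is to recognise this statement as an immediate specialisation of the machinery already built in Subsection \ref{subsec:characterisation}, the point being that class $(4,3)$ is strictly stronger than the hypothesis needed there. So rather than constructing anything by hand, I would simply chain together the relevant characterisations.

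First I would invoke Proposition \ref{prop:quadratic-iff-domino}, which says that a quadratic normalisation $(A,N)$ is of class $(4,3)$ if, and only if, the (full) domino rule is valid for $\overline{N}$. Since $(S,N)$ is assumed to be of class $(4,3)$, this hands us the full domino rule for $\overline{N}$. The key --- and essentially only --- observation is then that the full domino rule trivially implies the \emph{weak} domino rule: by definition the weak version merely tolerates failure of the domino rule in the case when the upper-right part of the diagram (\ref{eq:domino}) contains $e$, whereas the full version asserts that the implication never fails at all. Hence the weak domino rule is valid for $\overline{N}$.

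Finally, with the weak domino rule established, Proposition \ref{prop:normalisation-yields-local-factorability} shows that $\overline{N}$ is a local factorability structure, and Theorem \ref{thm:factorability-local-factorability}(2) then constructs from it a genuine factorability structure, which is precisely $\eta_N$; consequently $(M,S,\eta_N)$ is a factorable monoid. (Equivalently, one could cite the characterisation Theorem \ref{thm:characterisation} directly, since we have exhibited a quadratic normalisation mod $e$ whose restriction satisfies the weak domino rule.) I do not anticipate any real obstacle: the entire content reduces to the definitional implication ``full domino rule $\Rightarrow$ weak domino rule''. The only point requiring a little care is bookkeeping of conventions --- namely checking that $\eta_N$ is understood to denote exactly the factorability structure obtained by feeding the local factorability structure $\overline{N}$ of Proposition \ref{prop:normalisation-yields-local-factorability} into the construction of Theorem \ref{thm:factorability-local-factorability}(2).
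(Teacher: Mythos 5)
Your proposal is correct and matches the paper's intended argument: the paper presents this corollary precisely as an immediate consequence of Proposition \ref{prop:normalisation-yields-local-factorability}, with the implicit chain being class $\left(4,3\right)$ $\Rightarrow$ full domino rule (Proposition \ref{prop:quadratic-iff-domino}) $\Rightarrow$ weak domino rule $\Rightarrow$ local factorability structure $\Rightarrow$ factorability structure via Theorem \ref{thm:factorability-local-factorability}. Your expansion of that chain, including the bookkeeping remark about $\eta_{N}$, is exactly what the paper leaves to the reader.
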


Although the converse does not hold in general, it does so in the
case of graded monoids (as defined in Subsection~\ref{subsec:normalisations-and-normal-forms}).
\begin{lem}
Let $\left(M,S,\eta\right)$ be a factorable monoid. If $\left(M,S_{+}\right)$
is graded, then $\left(S_{+},N_{\varphi}\right)$ is a quadratic normalisation
of class $\left(4,3\right)$.
\end{lem}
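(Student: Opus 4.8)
The plan is to exploit the single structural feature separating the graded case from the general one: when $M$ is graded, the local factorability structure $\varphi$ associated to $\eta$ never outputs the neutral letter, and it is precisely the presence of that letter that pushed the right-class up from $3$ to $4$ in Lemma \ref{lem:factorable-implies-right-class-4}.

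First I would verify that $\varphi$ restricts to a map $S_+^2 \to S_+^2$. Fix $s,t \in S_+$. As $s|t$ is an $S_+$-word of length two representing $st$ and $M$ is graded, every $S_+$-word representing $st$ has length two; hence $st \neq 1$ and $|st| = 2$. Writing $\varphi(s,t) = \eta(st) = (\eta'(st), \overline{\eta}(st))$, the factorisation axioms give $\eta'(st) \in S_+$, so $|\eta'(st)| = 1$, and the geodesic condition then yields $|\overline{\eta}(st)| = |st| - |\eta'(st)| = 1$, so $\overline{\eta}(st) \in S_+$ as well. Thus $\varphi(S_+^2) \subseteq S_+^2$, each $\varphi_i$ preserves $S_+^3$, and no iterate of $\varphi$ applied to a triple in $S_+^3$ can contain $1$.

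Next I would read off right-class $3$. For $(r,s,t) \in S_+^3$ the triple $\varphi_{212}(r,s,t)$ lies in $S_+^3$ and hence contains no $1$; property (\ref{enu:local-factorability-structure-4}) of Definition \ref{def:local-factorability-structure} then forces $\varphi_{2121}(r,s,t) = \varphi_{212}(r,s,t)$. By Lemma \ref{lem:2121-stable} the word $\varphi_{2121}(r,s,t)$ is an extended form of $N_{\varphi}(r,s,t)$, and in the graded case no letter is ever dropped, so this extended form carries no padding and equals $N_{\varphi}(r,s,t) = N_{\varphi}'(r,s,t)$. Since $\overline{N_{\varphi}'} = \varphi$ by Remark \ref{rem:equal-to-local-factorability}, these identities read $N_{\varphi}'(r,s,t) = \overline{N}_{212}(r,s,t)$, which is exactly right-class $3$.

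Finally I would close the argument as in the proof of Lemma \ref{lem:factorable-implies-right-class-4}: a finite right-class rules out the case $(\infty,\infty)$ of Lemma \ref{lem:minimal-class}, so the minimal class is some $(m,n)$ with $|m-n| \le 1$ and $n \le 3$, whence $m \le 4$; therefore $(S_+, N_{\varphi}')$ is of class $(4,3)$. The only step demanding genuine care is the first paragraph, as the entire improvement from $(5,4)$ to $(4,3)$ rests on gradedness removing every occurrence of the neutral letter, the sole obstruction in property (\ref{enu:local-factorability-structure-4}).
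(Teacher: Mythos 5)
Your proposal is correct and follows essentially the same route as the paper's (very terse) proof: gradedness keeps all iterates of $\varphi$ inside $S_{+}^{\ast}$, so property (\ref{enu:local-factorability-structure-4}) of Definition \ref{def:local-factorability-structure} yields right-class $3$, and Lemma \ref{lem:minimal-class} then gives left-class $4$. Your first paragraph and the appeal to Lemma \ref{lem:2121-stable} merely make explicit the details the paper leaves implicit (together with the standing fact, from Lemma \ref{lem:factorable-implies-quadratic}, that $\left(S_{+},N_{\varphi}'\right)$ is a quadratic normalisation at all).
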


\begin{proof}
In addition to the conclusion of Lemma~\ref{lem:factorable-implies-quadratic}
that $\left(S_{+},N_{\varphi}\right)$ is a quadratic normalisation,
let us show how the existence of grading ensures the right-class $3$.
By the property~(\ref{enu:local-factorability-structure-4}) of Definition~\ref{def:local-factorability-structure}
(of local factorability structure), for every $\left(r,s,t\right)$
in $S_{+}^{3}$, the equality
\[
\left(N_{\varphi}\right)_{2121}\left(r,s,t\right)=\left(N_{\varphi}\right)_{212}\left(r,s,t\right)
\]
holds or $\left(N_{\varphi}\right)_{212}\left(r,s,t\right)$ contains
$1$ but, since $M$ is graded, the latter case cannot occur. Therefore,
$\left(S_{+},N_{\varphi}\right)$ is of right-class $3$. Then the
property of being of left-class $4$ follows from Lemma~\ref{lem:minimal-class}.
\end{proof}
Let us pin down those defining properties of quadratic normalisation
of class $\left(4,3\right)$ mod $1$ for $M$ that do not necessarily
arise from a factorability structure on $M$. In other words, we are
looking for a (not too restrictive) property that would complement
a factorability structure to a quadratic normalisation of class $\left(4,3\right)$.
The property~(\ref{enu:local-factorability-structure-4}) of Definition~\ref{def:local-factorability-structure}
(of local factorability structure) grants the equality
\begin{equation}
\varphi_{2121}\left(r,s,t\right)=\varphi_{212}\left(r,s,t\right)\label{eq:right-class-3}
\end{equation}
only for $S_{+}$-words $\left(r,s,t\right)$ such that $\varphi_{212}\left(r,s,t\right)$
contains no $1$. On the other hand, the right-class $3$ (i.e.\
the domino rule) requires the equality~(\ref{eq:right-class-3})
to hold for every $S$-word $\left(r,s,t\right)$, regardless of
whether $\varphi_{212}\left(r,s,t\right)$ contains $1$ or not.
Therefore, in order for a factorability structure to induce a quadratic
normalisation of class $\left(4,3\right)$, it suffices to strengthen
the condition~(\ref{enu:local-factorability-structure-4}) of Definition~\ref{def:local-factorability-structure},
as follows.
\begin{prop}
\label{prop:stronger-factorable-implies-(4,3)}Let $\left(M,S,\eta\right)$
be a factorable monoid. If the equality~(\ref{eq:right-class-3})
holds for each $S_{+}$-word $\left(r,s,t\right)$ such that $\varphi_{212}\left(r,s,t\right)$
contains $1$, then $\left(S,N_{\varphi}'\right)$ is a quadratic
normalisation of class $\left(4,3\right)$ mod $1$ for $M$.
\end{prop}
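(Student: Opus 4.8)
The plan is to show that strengthening condition (\ref{enu:local-factorability-structure-4}) of Definition \ref{def:local-factorability-structure} so that the equality (\ref{eq:right-class-3}) holds on all of $S_{+}^{3}$ — and not merely when $\varphi_{212}\left(r,s,t\right)$ avoids $1$ — is exactly what upgrades the right-class from $4$ to $3$, whereupon Lemma \ref{lem:minimal-class} delivers class $\left(4,3\right)$. I would start from the fact, already supplied by Lemma \ref{lem:factorable-implies-quadratic}, that $\left(S,N_{\varphi}'\right)$ is a quadratic normalisation; what remains is solely to pin down its class.

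First I would fix an arbitrary triple $\left(r,s,t\right)\in S^{3}$ and argue that $N_{\varphi}'\left(r,s,t\right)=\overline{N}_{2121}\left(r,s,t\right)$. This is the content of Lemma \ref{lem:2121-stable} (via Remark \ref{rem:equal-to-local-factorability}, identifying $\overline{N_{\varphi}'}$ with $\varphi$): the word $\varphi_{2121}\left(r,s,t\right)$ is an extended form of $N_{\varphi}\left(r,s,t\right)$, and after restoring the trailing $1$'s it coincides with $N_{\varphi}'\left(r,s,t\right)$. Next I would show that in fact $N_{\varphi}'\left(r,s,t\right)=\overline{N}_{212}\left(r,s,t\right)$. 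When $\left(r,s,t\right)$ lies in $S^{3}\setminus S_{+}^{3}$ — i.e. some component equals $1$ — the equality (\ref{eq:right-class-3}) is already guaranteed by Definition \ref{def:local-factorability-structure} (this is precisely the observation recorded in the paragraph preceding the statement). When $\left(r,s,t\right)\in S_{+}^{3}$, the additional hypothesis is exactly (\ref{eq:right-class-3}) on $S_{+}^{3}$. Combining the two cases, $\overline{N}_{2121}=\overline{N}_{212}$ holds on all of $S^{3}$, so by Proposition \ref{prop:class-characterisation} the normalisation is of right-class $3$.

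Having right-class $3$, I would invoke Lemma \ref{lem:minimal-class}: since the minimal class is $\left(m,n\right)$ with $\left|m-n\right|\leq1$, a right-class of $3$ forces the left-class to be at most $4$, giving class $\left(4,3\right)$. Finally, to identify this as a normalisation \emph{mod $1$} for $M$, I would note that the neutral letter and the presentation are already handed over by the factorability setup (as in the proof of Proposition \ref{prop:normalisation-yields-local-factorability}), so no fresh verification is needed there.

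The main obstacle is conceptual rather than computational: one must be careful that the single extra hypothesis (\ref{eq:right-class-3}) on $S_{+}^{3}$, together with what Definition \ref{def:local-factorability-structure} already forces off $S_{+}^{3}$, genuinely covers \emph{every} triple in $S^{3}$, so that the hypothesis of Proposition \ref{prop:class-characterisation} — which demands the relevant equality on all of $A^{3}$ — is met. The bookkeeping of trailing $1$'s between $N_{\varphi}$ and its extended form $N_{\varphi}'$ is routine but should be stated cleanly to avoid an off-by-a-letter confusion.
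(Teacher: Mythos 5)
Your proposal is correct and takes essentially the same approach as the paper, which states this proposition without a separate proof precisely because it is the combination you describe: Lemma \ref{lem:factorable-implies-quadratic} and Lemma \ref{lem:2121-stable} give $N_{\varphi}'=\varphi_{2121}$ on length-three words, the strengthened hypothesis together with the automatic cases on $S^{3}\setminus S_{+}^{3}$ (noted in the paragraph preceding the statement) upgrade this to $N_{\varphi}'=\varphi_{212}$, and Lemma \ref{lem:minimal-class} then supplies left-class $4$. One minor point: citing Proposition \ref{prop:class-characterisation} to get right-class $3$ is slightly off, since its criterion would also require $\varphi_{1212}=\varphi_{212}$; but this is harmless, because your first step already yields $N_{\varphi}'=\varphi_{212}$ on length-three words, which is right-class $3$ by definition.
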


\begin{proof}
Lemma~\ref{lem:factorable-implies-quadratic} says that $\left(S,N_{\varphi}'\right)$
is a quadratic normalisation mod $1$ for $M$.

To obtain the class $\left(4,3\right)$, what remains to be shown
is that the equality~(\ref{eq:right-class-3}) holds for all $\left(r,s,t\right)$
in $\left(S\right)^{3}\setminus S_{+}^{3}$. If $t$ equals $1$,
then $\varphi_{1}\left(r,s,t\right)$ equals $\varphi\left(r,s\right)|1$,
which is everywhere stable. If $s$ equals $1$, then $\varphi_{21}\left(r,s,t\right)$
equals $\varphi\left(r,t\right)|1$, which is everywhere stable.
If $r$ equals $1$, then $\varphi_{121}\left(r,s,t\right)$ equals
$\varphi_{212}\left(r,s,t\right)$ which equals $\varphi\left(s,t\right)|1$,
which is, again, everywhere stable.
\end{proof}
We have shown that a quadratic normalisation of class $\left(4,3\right)$
yields a factorability structure (Corollary~\ref{cor:(4,3)-implies-factorable}),
but not \emph{vice versa} (Examples~\ref{exa:sign} and \ref{exa:right-class-at-least-4}
or Theorem~\ref{thm:characterisation}). However, a factorability
structure does yield a quadratic normalisation of class $\left(4,3\right)$
under a stronger condition on local factorability (Proposition~\ref{prop:stronger-factorable-implies-(4,3)}).
Therefore, under the same condition, the rewriting system associated
with a factorable monoid is terminating, by Proposition~\ref{prop:(4,3)-converges}
and Corollary~\ref{cor:immediate-consequence}(\ref{enu:immediate-consequence-3}).

From another point of view, recall that Theorem~\ref{thm:stronger-conditions-terminating}
ensures termination of the rewriting system associated with a factorable
monoid, under an additional assumption on the factorisation map. It
is then natural to ask what is the relation between the additional
condition of Proposition~\ref{prop:stronger-factorable-implies-(4,3)}
and the additional assumption of Theorem~\ref{thm:stronger-conditions-terminating},
which are both known to ensure termination of the associated rewriting
system.

In the rest of the present subsection, we investigate the relation
between these two optional properties of a factorable monoid $\left(M,S,\eta\right)$:
for each $S_{+}$-word $\left(r,s,t\right)$ such that $\varphi_{212}\left(r,s,t\right)$
contains $1$, Condition
\begin{equation}
\left(\eta\mu\right)_{2121}\left(r,s,t\right)=\left(\eta\mu\right)_{212}\left(r,s,t\right);\label{eq:stable_212_on_S^3}
\end{equation}
and, for all $s$ in $S_{+}$ and $f$ in $M$, Assumption
\begin{equation}
\left(sf\right)'=\left(sf'\right)',\quad\overline{sf}=\overline{sf'}\cdot\overline{f}.\label{eq:the_stronger_conditions}
\end{equation}

\begin{rem}
Note that Assumption~(\ref{eq:the_stronger_conditions}) is trivially
valid in the case where $f$ lies in $S$ or in the case where $\left(s,f\right)$
is stable.
\end{rem}

It has already been known that Assumption~(\ref{eq:the_stronger_conditions})
implies Condition~(\ref{eq:stable_212_on_S^3}), as follows.
\begin{lem}[{\cite[Lemma 7.1]{HO}}]
\label{lem:stronger-conditions-imply-stable-212-on-M^3}Let $\left(M,S,\eta\right)$
be a factorable monoid. If Assumption~(\ref{eq:the_stronger_conditions})
is valid for all $s$ in $S_{+}$ and $f$ in $M$, then the maps
$\left(\eta\mu\right)_{212}$ and $\left(\eta\mu\right)_{2121}$ coincide
on $M^{3}$. 
\end{lem}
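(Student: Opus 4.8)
The plan is to fix $(r,s,t)\in M^{3}$, set $F\coloneqq\eta\mu$, and reduce the claim to a single stability statement. Since $F_{2121}=F_{1}\circ F_{212}$, it suffices to show that $F_{1}$ fixes the triple $F_{212}(r,s,t)$, i.e. that the pair formed by its first two entries is stable. First I would compute $F_{212}(r,s,t)$ entry by entry. Writing $p\coloneqq(st)'$, successive application of $F_{2}$, then $F_{1}$, then $F_{2}$ produces a triple $(a,b,c)$ with $a=(rp)'=\bigl(r\,(st)'\bigr)'$ and $b=\bigl(\overline{rp}\cdot\overline{st}\bigr)'=\bigl(\overline{r\,(st)'}\cdot\overline{st}\bigr)'$. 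The goal then becomes to recognise $a$ and $b$ as the first two letters of the $\eta$-normal form of the product $rst$.

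The key tool, and what I expect to be the main obstacle, is a strengthening of the additional assumption (\ref{eq:the_stronger_conditions}) from a generator $s\in S_{+}$ to an arbitrary element $g\in M$: for all $g,f\in M$ one has $(gf)'=(gf')'$ and $\overline{gf}=\overline{gf'}\cdot\overline{f}$. I would prove this by induction on $\left|g\right|$. The cases $\left|g\right|\leq 1$ are immediate: $g=1$ is trivial (using $\overline{s}=1$ and $s'=s$ for $s\in S_{+}\cup\{1\}$), and $g\in S_{+}$ is exactly the hypothesis. For the inductive step, factor $g=g'\cdot\overline{g}$ with $g'\in S_{+}$ and $\left|\overline{g}\right|=\left|g\right|-1$ (this uses that $(\eta'(g),\overline{\eta}(g))$ is geodesic), apply the base case with leading generator $g'$ to the elements $\overline{g}f$ and $\overline{g}f'$, and substitute the induction hypothesis for $\overline{g}$. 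One then checks that both $(gf)'$ and $(gf')'$ collapse to $\bigl(g'\,(\overline{g}f')'\bigr)'$, and likewise that both $\overline{gf}$ and $\overline{gf'}\cdot\overline{f}$ reduce to the common word $\overline{g'\,(\overline{g}f')'}\cdot\overline{\overline{g}f'}\cdot\overline{f}$. This step is routine but demands careful bookkeeping of the nested factorisations.

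With the generalised assumption available, the identification of $a$ and $b$ is immediate. Taking $g=r$ and $f=st$ gives $a=\bigl(r\,(st)'\bigr)'=(rst)'$, while the second equation yields $\overline{rp}\cdot\overline{st}=\overline{r\,(st)'}\cdot\overline{st}=\overline{rst}$, so that $b=(\overline{rst})'$. Because $\nf_{\eta}$ is obtained by iterating $\eta$ along a tuple, the letters $(rst)'$ and $(\overline{rst})'$ are precisely the first two letters of $\nf_{\eta}(rst)$.

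Finally, I would invoke Lemma \ref{lem:normal-stable}: the word $\nf_{\eta}(rst)$ is everywhere stable, so its first two letters $a=(rst)'$ and $b=(\overline{rst})'$ form a stable pair, i.e. $\eta(ab)=(a,b)$. The degenerate cases $\left|rst\right|\leq 1$, where $b=1$, are checked by hand using $\eta(s)=(s,1)$ for $s\in S_{+}\cup\{1\}$. Hence $F_{1}$ fixes $(a,b,c)=F_{212}(r,s,t)$, which gives $F_{2121}(r,s,t)=F_{212}(r,s,t)$ and completes the proof.
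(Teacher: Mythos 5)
Your proof is correct: the entrywise computation of $F_{212}$, the induction on $\left|g\right|$ extending (\ref{eq:the_stronger_conditions}) from $S_{+}\times M$ to all of $M\times M$ (both sides do collapse to $\bigl(g'\,(\overline{g}f')'\bigr)'$, respectively to $\overline{g'\,(\overline{g}f')'}\cdot\overline{\overline{g}f'}\cdot\overline{f}$), the resulting identification of $F_{212}(r,s,t)$ with $\bigl((rst)',(\overline{rst})',\overline{\overline{rst}}\bigr)$, and the appeal to Lemma \ref{lem:normal-stable} together with the hand-check of the degenerate cases all go through.

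Regarding the comparison: the paper never proves this lemma in-text --- it cites \cite[Lemma 7.1]{HO} --- and what it does prove is only the weaker Corollary \ref{cor:stronger-conditions-imply-stable-212-on-S^3}, the coincidence on $S^{3}$ rather than $M^{3}$. The paper's displayed argument is an operator manipulation: rewrite (\ref{eq:the_stronger_conditions}) as $\eta_{1}\mu_{1}=\mu_{2}\eta_{1}\mu_{1}\eta_{2}$ on $S\times M$, multiply by $\eta_{2}$ on the left and $\mu_{2}$ on the right to obtain $\eta_{2}\eta_{1}\mu_{1}\mu_{2}=\left(\eta\mu\right)_{2}\left(\eta\mu\right)_{1}\left(\eta\mu\right)_{2}$ on $S\times M\times M$, then conclude by Lemma \ref{lem:normal-stable}. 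That argument is confined to triples whose leftmost entry is a generator precisely because the hypothesis is only available for $s\in S_{+}$; this is exactly the obstacle you identified, and your length induction is the ingredient that removes it. Once the generalised assumption is in place, your closing step is the same as the paper's: the composite $\eta_{2}\eta_{1}\mu_{1}\mu_{2}$ is precisely your ``first two letters of the normal form plus remainder'', and everywhere-stability of normal forms finishes the argument. So your proposal amounts to a self-contained proof of the full $M^{3}$ statement, which the paper delegates to \cite{HO}, at the cost of the bookkeeping in the induction; the paper's operator calculus is slicker but, as displayed, yields only the $S^{3}$ case.
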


\begin{cor}
\label{cor:stronger-conditions-imply-stable-212-on-S^3}Let $\left(M,S,\eta\right)$
be a factorable monoid. If Assumption~(\ref{eq:the_stronger_conditions})
is valid for all $s$ in $S_{+}$ and $f$ in $M$, then Condition~(\ref{eq:stable_212_on_S^3})
is satisfied for all $\left(r,s,t\right)$ in $S_{+}^{3}$.
\end{cor}

For the reader's convenience, we adapt the proof of Lemma~\ref{lem:stronger-conditions-imply-stable-212-on-M^3}
here, to prove Corollary~\ref{cor:stronger-conditions-imply-stable-212-on-S^3}.
\begin{proof}
If Assumption~(\ref{eq:the_stronger_conditions}) is valid for all
$s$ in $S_{+}$ and $f$ in $M$, then the maps $\eta_{1}\mu_{1}$
and $\mu_{2}\eta_{1}\mu_{1}\eta_{2}$ coincide on $S_{+}\times M$.
Composing each of these two maps with $\mu_{2}$ and then composing
$\eta_{2}$ with the obtained composite map, we see that the maps
$\eta_{2}\eta_{1}\mu_{1}\mu_{2}$ and $\eta_{2}\mu_{2}\eta_{1}\mu_{1}\eta_{2}\mu_{2}$
coincide on $S_{+}\times M^{2}$. Note that the map $\eta_{2}\eta_{1}$
produces the $\eta$-normal form by definition, and that the restriction
of the map $\eta_{2}\mu_{2}\eta_{1}\mu_{1}\eta_{2}\mu_{2}$ to $S_{+}^{3}$
coincides with the map $\varphi_{2}\varphi_{1}\varphi_{2}$. Therefore,
every image under $\varphi_{2}\varphi_{1}\varphi_{2}$ is everywhere
stable by Lemma~\ref{lem:normal-stable}. Thus, Condition~(\ref{eq:stable_212_on_S^3})
is satisfied for all $\left(r,s,t\right)$ in $S_{+}^{3}$.
\end{proof}
In the opposite direction, a partial result has already been known.
Namely, Condition~(\ref{eq:stable_212_on_S^3}) implies Assumption~(\ref{eq:the_stronger_conditions})
under certain, quite restrictive, additional requirements imposed
on both the monoid and the normalisation. The next lemma is a
straightforward adaptation of \cite[Proposition IV.1.49]{DDGKM};
it is also hinted in the proof of \cite[Corollary 7.4.5]{Ozo}.
\begin{rem}
Relying on Theorem~\ref{thm:characterisation} and Corollary~\ref{cor:immediate-consequence}(\ref{enu:immediate-consequence-2})
in particular, we are going to abuse terminology by saying \textquoteleft the
normal form\textquoteright{} without specifying whether it arises
from factorability or normalisation.
\end{rem}

\begin{lem}
Let $M$ be a left-cancellative monoid containing no nontrivial invertible
element. If $\left(S,N\right)$ is a left-weighted quadratic normalisation
of class $\left(4,3\right)$ mod $1$ for $M$, then Assumption~(\ref{eq:the_stronger_conditions})
is valid for all $s$ in $S_{+}$ and $f$ in $M$.
\end{lem}

\begin{proof}
We need to show that the equalities $\left(sf\right)'=\left(sf'\right)'$
and $\overline{sf}=\overline{sf'}\cdot\overline{f}$ hold for all
$s$ in $S_{+}$ and $f$ in $M$.

By the definition of factorability structure, we have $\left(sf'\right)'\preceq sf'$,
and $f'\preceq f$ hence also $sf'\preceq sf$. The transitivity gives
$\left(sf'\right)'\preceq sf$, and the assumption that $\left(S,N\right)$
is left-weighted then yields $\left(sf'\right)'\preceq\left(sf\right)'$.

By Corollary~\ref{cor:immediate-consequence}(\ref{enu:immediate-consequence-2}),
the normal form of $f$ has the form $f'|r_{2}|\cdots|r_{n}$. Hence,
$\left(sf\right)'\preceq sf=sf'r_{2}\cdots r_{n}$. Since $\left(sf\right)'$
lies in $S$ and $f'|r_{2}|\cdots|r_{n}$ is normal, we obtain $\left(sf\right)'\preceq sf'$
by the property~(\ref{eq:greedy}) in Proposition~\ref{prop:garside-iff-left-weighted}.
That yields $\left(sf\right)'\preceq\left(sf'\right)'$ due to the
assumption that $\left(S,N\right)$ is left-weighted. 

Since $M$ is a left-cancellative monoid containing no nontrivial
invertible element, we conclude that $\left(sf\right)'=\left(sf'\right)'$.
Then $\overline{sf}=\overline{sf'}\cdot\overline{f}$ follows from
the left cancellation property.
\end{proof}
We want to find out whether Condition~(\ref{eq:stable_212_on_S^3})
implies Assumption~(\ref{eq:the_stronger_conditions}) in general,
i.e.\ without all the additional requirements of the previous lemma,
or is the latter strictly stronger than the former. Let us begin by
considering length-two words.
\begin{lem}
\label{lem:stable-212-on-S^3-implies-stronger-conditions-for-=00005Cell(g)=00003D2}Let
$\left(M,S,\eta\right)$ be a factorable monoid. If Condition~(\ref{eq:stable_212_on_S^3})
is satisfied for each $S_{+}$-word $\left(r,s,t\right)$ such that
$\varphi_{212}\left(r,s,t\right)$ contains $1$, then Assumption~(\ref{eq:the_stronger_conditions})
is valid for all $s$ in $S_{+}$ and length-two $f$ in $M$.
\end{lem}

\begin{proof}
The idea is to equate two different expressions of the normal form
of $sf$. Fix arbitrary $s$ in $S_{+}$ and a length-two element
$f$ of $M$.

First compute the $\eta$-normal form of $sf$, by definition:
\begin{equation}
sf\stackrel{\eta_{1}}{\mapsto}\left(\left(sf\right)',\overline{sf}\right)\stackrel{\eta_{2}}{\mapsto}\left(\left(sf\right)',\overline{sf}',\overline{\overline{sf}}\right).\label{eq:NF(sf)}
\end{equation}

Next we compute the same normal form in a different manner. Since
the length of $f$ equals $2$, we know that $\overline{f}$ lies
in $S_{+}$. Hence, $\left(s,f',\overline{f}\right)$ lies in $S_{+}^{3}$.
So we have another length-three $S_{+}$-word evaluating to $sf$.
By the property~(\ref{enu:local-factorability-structure-4}) of Definition~\ref{def:local-factorability-structure}
(of local factorability structure) and Condition~(\ref{eq:stable_212_on_S^3}),
the word $\left(\eta\mu\right)_{212}\left(s,f',\overline{f}\right)$
is everywhere stable, hence also normal by Lemma~\ref{lem:factorable-implies-quadratic}
and the property~(\ref{enu:quadratic-1}) of Definition~\ref{def:quadratic}
(of quadratic normalisation). Therefore, $\left(\eta\mu\right)_{212}\left(s,f',\overline{f}\right)$
is the normal form of the evaluation of $\left(s,f',\overline{f}\right)$,
i.e.\ of $sf$. Since computing $\left(\eta\mu\right)_{212}\left(s,f',\overline{f}\right)$
depends on whether any $1$ occurs in the process, we consider the
following cases.
\begin{casenv}
\item If $sf'$ is not an element of $S$, then
\[
\left(s,f',\overline{f}\right)\stackrel{\left(\eta\mu\right)_{2}}{\mapsto}\left(s,f',\overline{f}\right)\stackrel{\left(\eta\mu\right)_{1}}{\mapsto}\left(\left(sf'\right)',\overline{sf'},\overline{f}\right)\stackrel{\left(\eta\mu\right)_{2}}{\mapsto}\left(\left(sf'\right)',\left(\overline{sf'}\cdot\overline{f}\right)',\overline{\overline{sf'}\cdot\overline{f}}\right).
\]
Comparing the result to (\ref{eq:NF(sf)}) yields Assumption~(\ref{eq:the_stronger_conditions}),
by Corollary~\ref{cor:immediate-consequence}(\ref{enu:immediate-consequence-2}).
Indeed, equating the first components of the obtained expressions
gives $\left(sf\right)'=\left(sf'\right)'$. Equating the second and
the third components gives $\overline{sf}=\overline{sf'}\cdot\overline{f}$.
\item If $sf'$ is an element of $S_{+}$, then
\begin{equation}
\left(s,f',\overline{f}\right)\stackrel{\left(\eta\mu\right)_{2}}{\mapsto}\left(s,f',\overline{f}\right)\stackrel{\left(\eta\mu\right)_{1}}{\mapsto}\left(\left(sf'\right)',1,\overline{f}\right)\stackrel{\left(\eta\mu\right)_{2}}{\mapsto}\left(\left(sf'\right)',\overline{f},1\right).\label{eq:base-case}
\end{equation}
Again, equating the first components of the result and (\ref{eq:NF(sf)})
gives $\left(sf\right)'=\left(sf'\right)'$. Equating the second and
the third components now gives $\overline{sf}=\overline{f}$, but
note that, in the present case, this equality is equivalent to $\overline{sf}=\overline{sf'}\cdot\overline{f}$.
\end{casenv}
Finally, observe that the case $sf'=1$ cannot occur. Namely, if $sf'$
were equal to $1$, then applying $\left(\eta\mu\right)_{1}$ after
(\ref{eq:base-case}) would yield $\left(\overline{f},1,1\right)$
since $\overline{f}$ is in $S_{+}$, and that would contradict Condition~(\ref{eq:stable_212_on_S^3}).
\end{proof}

Lemma~\ref{lem:stable-212-on-S^3-implies-stronger-conditions-for-=00005Cell(g)=00003D2}
suggests itself as an induction base case, which we are going to achieve
in Proposition~\ref{prop:induction}. First we introduce some notation,
in order to simplify exposition.

\begin{notation*}If $\left(r_{1},r_{2},\ldots,r_{n}\right)$ is
the normal form of $f$ in $M$, then the product $r_{1}r_{2}\cdots r_{n-1}$
in $M$ is denoted by $\underline{f}$.\end{notation*}
\begin{prop}
\label{prop:induction}Let $\left(M,S,\eta\right)$ be a factorable
monoid. If Condition~(\ref{eq:stable_212_on_S^3}) is satisfied for
each $S_{+}$-word $\left(r,s,t\right)$ such that $\varphi_{212}\left(r,s,t\right)$
contains $1$, then Assumption~(\ref{eq:the_stronger_conditions})
is valid for all $s$ in $S_{+}$ and $f$ in $M$.
\end{prop}

\begin{proof}
By Proposition~\ref{prop:stronger-factorable-implies-(4,3)}, the
quadratic normalisation corresponding to the given factorability structure
is of class $\left(4,3\right)$. Then, Remark~\ref{rem:leftmost-letter}
(on Lemma~\ref{lem:normalisation-letter-next-to-normal-word}) implies
the equality $\left(sf\right)'=\left(sf'\right)'$ for all $s$ in
$S_{+}$ and $f$ in $M$.

We need to show that the equality $\overline{sf}=\overline{sf'}\cdot\overline{f}$
also holds for all $s$ in $S_{+}$ and $f$ in $M$. We are going
to achieve this using induction on the length of $f$. Let $P\left(n\right)$
be the statement: the equality $\overline{sf}=\overline{sf'}\cdot\overline{f}$
holds for all $s$ in $S_{+}$ and $f$ of length $n$ in $M$. The
statement $P\left(2\right)$ (resp. $P\left(1\right)$) holds by Lemma~\ref{lem:stable-212-on-S^3-implies-stronger-conditions-for-=00005Cell(g)=00003D2}
(resp. trivially).

Let $n$ be an integer greater than $2$, and suppose that the statement
$P\left(n-1\right)$ holds. Fix an arbitrary $s$ in $S_{+}$ and
$f$ of length $n$ in $M$. Since the length of $\underline{f}$
equals $n-1$, the equality $\overline{s\underline{f}}=\overline{sf'}\cdot\overline{\underline{f}}$
holds by the inductive hypothesis.\footnote{Statement $P\left(1\right)$ could not serve as the base case because
$\overline{\underline{f}}$ in the inductive hypothesis would not
be defined, which is why we needed Lemma~\ref{lem:stable-212-on-S^3-implies-stronger-conditions-for-=00005Cell(g)=00003D2}.} Notice that notation $\overline{\underline{f}}$ is not ambiguous
since $\overline{\left(\underline{f}\right)}$ equals $\underline{\left(\overline{f}\right)}$
due to the fact that the normal form is everywhere stable. Denote
the normal form of $f$ by $\left(r_{1},r_{2},\ldots,r_{n}\right)$.
Multiplying the equality $\overline{s\underline{f}}=\overline{sf'}\cdot\overline{\underline{f}}$
by $r_{n}$ on the right yields
\begin{equation}
\overline{s\underline{f}}\cdot r_{n}=\overline{sf'}\cdot\overline{f}.\label{eq:inductive_hypothesis}
\end{equation}
Hence it suffices to show that the equality $\overline{s\underline{f}}\cdot r_{n}=\overline{sf}$
holds.

Let us compute the normal forms of $\overline{s\underline{f}}$ and
$\overline{sf}$ using Lemma~\ref{lem:normalisation-letter-next-to-normal-word}.
Denoting  $s_{1}\coloneqq s$ and $t_{i}|s_{i}\coloneqq\overline{N}\left(s_{i-1}|r_{i}\right)$
for $i\in\left\{ 2,\ldots,n-1\right\} $, we obtain
\[
\nf\left(\overline{s\underline{f}}\right)=t_{2}|t_{3}|\cdots|t_{n-1}|s_{n-1}
\]
and
\[
\nf\left(\overline{sf}\right)=t_{2}|t_{3}|\cdots|t_{n}|s_{n},
\]
as displayed by the diagram\[
\begin{tikzcd}[row sep=large, column sep=large] \null \arrow[r, shorten <=-2pt, shorten >=-2pt, "t_{2}"] \arrow[r, phantom, ""{name=t2}', near end] \arrow[d, "s"'] & \null \arrow[r, shorten <=-2pt, shorten >=-2pt, "t_{3}"] \arrow[r, phantom, ""{name=t3_1}, near start] \arrow[r, phantom, ""{name=t3_2}', near end] \arrow[d, shorten <=-2pt, shorten >=-2pt, "s_{2}"] \arrow[d, phantom, ""{name=s2}', near start] & \null \arrow[r, shorten <=-2pt, shorten >=-2pt, dotted] \arrow[r, phantom, ""{name=tk_1}, near start] \arrow[r, phantom, ""{name=tk_2}', near end] \arrow[d, shorten <=-2pt, shorten >=-2pt, "s_{3}"] \arrow[d, phantom, ""{name=s3}', near start]  & \null \arrow[r, shorten <=-2pt, shorten >=-2pt, "t_{n-1}"] \arrow[r, phantom, ""{name=tn-1_1}, near start] \arrow[r, phantom, ""{name=tn-1_2}', near end] \arrow[d, shorten <=-2pt, shorten >=-2pt, "s_{n-2}"] \arrow[d, phantom, ""{name=sn-2}', near start] & \null \arrow[r, shorten <=-2pt, shorten >=-2pt, "t_{n}"] \arrow[r, phantom, ""{name=tn_1}, near start] \arrow[r, phantom, ""{name=tn_2}', near end] \arrow[d, shorten <=-2pt, shorten >=-2pt, "s_{n-1}"] \arrow[d, phantom, ""{name=sn-1}', near start] & \null \arrow[d, shorten <=-2pt, shorten >=-2pt, "s_{n}"] \arrow[d, phantom, ""{name=sn}', near start] \\ \null \arrow[r, shorten <=-2pt, shorten >=-2pt, "r_{2}"] \arrow[r, phantom, ""{name=r2}', near end] & \null \arrow[r, shorten <=-2pt, shorten >=-2pt, "r_{3}"] \arrow[r, phantom, ""{name=r3_1}', near start] \arrow[r, phantom, ""{name=r3_2}', near end] & \null \arrow[r, shorten <=-2pt, shorten >=-2pt, dotted] \arrow[r, phantom, ""{name=rk_1}', near start] \arrow[r, phantom, ""{name=rk_2}', near end] & \null \arrow[r, shorten <=-2pt, shorten >=-2pt, "r_{n-1}"] \arrow[r, phantom, ""{name=rn-1_1}', near start] \arrow[r, phantom, ""{name=rn-1_2}', near end] & \null \arrow[r, shorten <=-2pt, shorten >=-2pt, "r_{n}"] \arrow[r, phantom, ""{name=rn_1}', near start] & \null \arrow[from=t2, to=s2, shorten <=-2pt, shorten >=-2pt, bend right, dash] \arrow[from=t2, to=t3_1, shorten <=-2pt, shorten >=-2pt, bend left=70, dash, dashed] \arrow[from=r2, to=r3_1, shorten <=-2pt, shorten >=-2pt, bend right=70, dash] \arrow[from=t3_2, to=s3, shorten <=-2pt, shorten >=-2pt, bend right, dash] \arrow[from=t3_2, to=tk_1, shorten <=-2pt, shorten >=-2pt, bend left=70, dash, dashed] \arrow[from=r3_2, to=rk_1, shorten <=-2pt, shorten >=-2pt, bend right=70, dash] \arrow[from=tk_2, to=sn-2, shorten <=-2pt, shorten >=-2pt, bend right, dash] \arrow[from=tk_2, to=tn-1_1, shorten <=-2pt, shorten >=-2pt, bend left=70, dash, dashed] \arrow[from=rk_2, to=rn-1_1, shorten <=-2pt, shorten >=-2pt, bend right=70, dash] \arrow[from=tn-1_2, to=sn-1, shorten <=-2pt, shorten >=-2pt, bend right, dash] \arrow[from=tn-1_2, to=tn_1, shorten <=-2pt, shorten >=-2pt, bend left=70, dash, dashed] \arrow[from=rn-1_2, to=rn_1, shorten <=-2pt, shorten >=-2pt, bend right=70, dash] \arrow[from=tn_2, to=sn, shorten <=-2pt, shorten >=-2pt, bend right, dash] \end{tikzcd}
\]with arcs having the same meaning as in the diagram  (\ref{eq:domino}).
Therefore, $\overline{s\underline{f}}=t_{2}t_{3}\cdots t_{n-1}s_{n-1}$.
Multiplying by $r_{n}$ on the right yields
\[
\overline{s\underline{f}}\cdot r_{n}=t_{2}t_{3}\cdots t_{n-1}s_{n-1}r_{n}=t_{2}t_{3}\cdots t_{n-1}t_{n}s_{n}=\overline{sf},
\]
which, together with the equality~(\ref{eq:inductive_hypothesis}),
implies $P\left(n\right)$.
\end{proof}
The results of the present subsection enable the following characterisation. 
\begin{prop}
\label{prop:factorable-(4,3)}Let $\left(M,S,\eta\right)$ be a factorable
monoid. Then the following properties are equivalent.
\begin{enumerate}
\item \label{enu:summary-1}For all $s$ in $S_{+}$ and $f$ in $M$, the
equalities $\left(sf\right)'=\left(sf'\right)'$ and $\overline{s}f=\overline{sf'}\cdot\overline{f}$
hold.
\item \label{enu:summary-2}For all $\left(f,g,h\right)$ in $M^{3}$, the
equality 
\[
\left(\eta\mu\right)_{2121}\left(f,g,h\right)=\left(\eta\mu\right)_{212}\left(f,g,h\right)
\]
 holds.
\item \label{enu:summary-3}For each $S_{+}$-word $\left(r,s,t\right)$
such that $\varphi_{212}\left(r,s,t\right)$ contains $1$, the equality
\[
\left(\eta\mu\right)_{2121}\left(r,s,t\right)=\left(\eta\mu\right)_{212}\left(r,s,t\right)
\]
 holds.
\item \label{enu:summary-4}The quadratic normalisation $\left(S,N_{\varphi}'\right)$
mod $1$ for $M$ is of class $\left(4,3\right)$. 
\end{enumerate}
\end{prop}

\begin{proof}
It has already been known (Lemma~\ref{lem:stronger-conditions-imply-stable-212-on-M^3})
that (\ref{enu:summary-1}) implies (\ref{enu:summary-2}), which,
in turn, clearly implies (\ref{enu:summary-3}). The properties~(\ref{enu:summary-3})
and (\ref{enu:summary-4}) are equivalent by the definitions of the
notions concerned. Finally, Proposition~\ref{prop:induction} says
that (\ref{enu:summary-3}) implies (\ref{enu:summary-1}).
\end{proof}
Let us observe that Proposition~\ref{prop:factorable-(4,3)} can
also be read another way, thanks to Theorem~\ref{thm:characterisation},
as a characterisation of monoids admitting a quadratic normalisation
of class $\left(4,3\right)$ among factorable monoids.
\begin{cor}
A monoid admits a quadratic normalisation of class $\left(4,3\right)$
if, and only if, it admits a factorability structure having any of
the properties (\ref{enu:summary-1}), (\ref{enu:summary-2}) and
(\ref{enu:summary-3}) of Proposition~\ref{prop:factorable-(4,3)}.
\end{cor}

\end{document}